\numberwithin{equation}{section}
\begin{document}
\newtheorem{Thm}{Theorem}[section]
\newtheorem{Cor}[Thm]{Corollary}
\newtheorem{Lem}[Thm]{Lemma}
\newtheorem{Prop}[Thm]{Proposition}
\newtheorem{Def}[Thm]{Definition}
\newtheorem{Rem}[Thm]{Remark}
\newtheorem{Assump}[Thm]{Assumption}
\newtheorem{Exam}[Thm]{Example}

\title{Heavy-ball-based optimal thresholding algorithms for  sparse linear inverse problems \thanks{The work was founded by the National Natural Science Foundation of China
(NSFC 12071307 and 11771255), Young Innovation Teams of Shandong
Province (2019KJI013), and  Shandong Province Natural Science Foundation
(ZR2021MA066, ZR2023MA020).}}


\titlerunning{Optimal $k$-thresholding algorithms}        

\author{Zhong-Feng Sun \and Jin-Chuan Zhou \and  Yun-Bin Zhao}


\institute{Zhong-Feng Sun\at
               School of Mathematics and Statistics, Shandong
University of Technology, Zibo, Shandong,  China. \email{zfsun@sdut.edu.cn}
 \and  Jin-Chuan Zhou\at  School of Mathematics and Statistics, Shandong
University of Technology, Zibo, Shandong,  China. \email{jinchuanzhou@sdut.edu.cn}
\and Yun-Bin Zhao \at Corresponding author. Shenzhen Research Institute of Big Data, Chinese University of Hong Kong, Shenzhen,  Guangdong, China. \email{yunbinzhao@cuhk.edu.cn}
}

\date{Received: date / Accepted: date}

\maketitle

\begin{abstract}
 Linear inverse problems arise in diverse engineering fields especially in signal and image reconstruction. The development of computational methods for linear inverse problems with sparsity is one of the recent trends in this field. The so-called optimal $k$-thresholding is a newly introduced method for sparse optimization and linear inverse problems. Compared to other sparsity-aware algorithms, the advantage of optimal $k$-thresholding method lies in that it performs thresholding and error metric reduction simultaneously and thus works stably and robustly for solving medium-sized linear inverse problems. However, the runtime of this method is generally high  when the size of the problem is large. The purpose of this paper is to propose an acceleration strategy for this method. Specifically, we propose a heavy-ball-based optimal $k$-thresholding (HBOT) algorithm and its relaxed variants for sparse linear inverse problems. The convergence of these algorithms is shown under the restricted isometry property. In addition, the numerical performance of the heavy-ball-based relaxed optimal $k$-thresholding pursuit (HBROTP) has been evaluated, and simulations indicate that HBROTP admits robustness for signal and image reconstruction even in noisy environments.
\end{abstract}

\keywords{ Sparse linear inverse problems \and optimal $k$-thresholding \and  heavy-ball method \and restricted isometry property \and phase transition \and  image processing  }

\subclass{94A12 \and 15A29 \and  90C25 \and 90C20 \and 49M20}

\section{Introduction}\label{sec-inct}
In recent years, the linear inverse problem has
gained much attention in various fields such as wireless
communication \cite{BSR17,CZJAZ21} and signal/image processing \cite{BT09,BD10,E10,E12,LAZS20,OS18,TR20,XDY21}. A typical linear inverse problem is about the reconstruction of   unknown data  $z\in \mathbb{R}^{r}$ from the acquired linear measurements
\begin{equation}\label{model-lip}
y=\Phi z+\nu,
\end{equation}
where $\Phi\in \mathbb{R}^{m\times r}$ is a given measurement matrix, $y\in \mathbb{R}^{m}$ are the acquired measurements, and $\nu\in \mathbb{R}^{m}$ are the measurement errors. In this paper, we consider the case $m< r,$ for which   it is generally impossible to reconstruct the data $z$ from  the linear system \eqref{model-lip} unless $z$ possesses a certain   structure such as sparsity. Fortunately,  in many practical applications, the signal to recover possesses certain sparse structure or it can be sparsely represented under a suitable transformation.  For instance,  many natural image can be sparsely represented via wavelet transforms. Suppose that  $z$ can be sparsely represented via the basis $\Psi\in \mathbb{R}^{r\times n}$ ($r\leq n$), i.e.,  $z=\Psi x$ where the vector $x\in \mathbb{R}^{n}$ is either $k$-sparse or $k$-compressible for some integer number $k \ll n.$ A vector is said to be $k$-sparse if $\| x\|_0\leq k$, and $k$-compressible if $x$ can be approximated by a $k$-sparse vector, where $\| \cdot\|_0$ denotes the number of nonzero entries of a vector.  With a sparse representation of $z,$ the model \eqref{model-lip} can be written as
\begin{equation}\label{model-slip}
y=Ax+\nu,
\end{equation}
where  $A=\Phi\Psi\in \mathbb{R}^{m\times n}$ ($m < n$) is still referred to as a measurement matrix. In this case, the problem (\ref{model-lip}) is transformed to the so-called sparse linear inverse (SLI) problem which is to reconstruct the sparse data $x$ via the linear system (\ref{model-slip}). Once the sparse data $x$ is reconstructed,  the original data $z$ can be immediately obtained by setting $ z:= \Psi x.$  The SLI problem can be formulated as an optimization problem (see, e.g., \cite{BT09,BD10,E10,FR13,CZJAZ21,OS18,TW10}). Typically, it can be formulated as the sparse optimization problem
 \begin{equation}\label{model}
\underset{u}{\rm min}\{ {\| y-Au\|}_2^2: \| u\|_0\leq k\}.
\end{equation}
It can also be formulated as the $\ell_1$-minimization (basis pursuit) problem
 \begin{equation}\label{L1-min}
\underset{u}{\rm min}\{ \| u\|_1: Au=y\}
\end{equation}
  as well as the LASSO problem
 \begin{equation}\label{Lasso}
\underset{u}{\rm min} {\| y-Au\|}_2^2+ \mu\| u\|_1,
\end{equation}
where  $\mu>0$ is called a regularization parameter. All these models,  (\ref{model})-(\ref{Lasso}), are widely used in signal and image reconstruction with sparsity.

Depending on the problem formulations, several classes of algorithms for SLI problems have been developed over the past decades, including the thresholding algorithms \cite{E10,E12,FR13}, greedy methods \cite{DM09,NT09,TG07},  convex optimization \cite{CT05,CWB08,CDS98,ZL17}, nonconvex optimization   \cite{C07}, and Bayesian  methods  \cite{SPZ08,WB04}. In this paper, we focus on the model (\ref{model}) for which the thresholding algorithms are particularly convenient to develop. The thresholding method was first proposed by Donoho and  Johnstone \cite{DJ94}. It has experienced a significant development since 1994 and has evolved into a large family of algorithms which includes the hard thresholding \cite{BTW15,BD08,BD10,F11,KC14,MZ20}, soft thresholding \cite{DDM04,D95,E06} and optimal $k$-thresholding algorithms \cite{Z20,ZL21}. It is worth stressing that an advantage of thresholding methods  is that the algorithms can guarantee the generated points being feasible to the problem \eqref{model}. The simplest thresholding method might be the  iterative hard thresholding (IHT) \cite{BD08}. The combination of IHT and orthogonal projection yields the hard thresholding pursuit (HTP) \cite{F11}. Due to low computational complexity, IHT and HTP  have been widely used in   signal reconstruction with compressive samplings   \cite{BTW15,B12,BD10,KC14}.

However, as pointed out in \cite{Z20,ZL21}, performing hard thresholding on non-sparse iterates may not necessarily reduce the objective  value of  (\ref{model}) and thus may cause numerical oscillation during iterations. Thus the optimal $k$-thresholding operator was introduced in \cite{Z20} (see also  \cite{ZL21}) to alleviate such a weakness of hard thresholding.  This operator may perform thresholding on iterates and, in the meantime,  reduce the objective value of (\ref{model}). The optimal $k$-thresholding (OT)   and optimal $k$-thresholding pursuit (OTP) algorithms are first developed in \cite{Z20}. Recall that the optimal $k$-thresholding of a given vector $v\in \mathbb{R}^n $ is defined as
\begin{equation}\label{OT-model}
\underset{w}{\rm min} \{ {\| y-A(v\circ{w})\|}_2^2: ~  { \bf e}^T w=k, ~ w \in\{0,1\}^n\},
\end{equation}
where  ${ \bf e}=(1,1,\ldots,1)^T\in\mathbb{R}^n$,  $\{0,1\}^n$ is the set of $n$-dimensional binary vectors and  $v\circ w:=(v_1w_1,\ldots, v_nw_n)^T$ denotes the Hadamard product of two vectors. However, from a computational point of view,  it is generally more convenient to solve the following convex optimization
\begin{equation}\label{ROT-model}
\underset{w}{\rm min} \{ {\| y-A(v\circ{w})\|}_2^2: \ \ { \bf e}^T w=k, \ \ 0\leq w \leq \bf e\},
\end{equation} which is  a tight relaxation of \eqref{OT-model}.
  This problem is referred to as data compressing problem in \cite{Z20,ZL21}. Based on \eqref{ROT-model},  the relaxed optimal $k$-thresholding (ROT$\omega$) and relaxed  optimal $k$-thresholding pursuit (ROTP$\omega$) algorithms were proposed in \cite{Z20,ZL21}, where $\omega$  represents times of data compression that are performed in the algorithms. When $\omega=1$, the algorithm is termed as ROTP. Some modifications of ROTP using partial gradient and Newton-type search direction were studied recently in \cite{MZ22,MZKS22}.  While the convex optimization  \eqref{ROT-model} can be efficiently solved by existing convex optimization solvers, however, solving such a problem remains time-consuming when the size of the problem is large. Thus it is important to study how the computational cost of ROTP-type methods might be reduced and how these methods can be accelerated by integrating an acceleration technique such as the heavy-ball (HB) or Nesterov's technique. By using linearization together with a certain binary regularization method,  the so-called nature thresholding (NT) algorithm is developed recently in \cite{ZL22}, whose  computational complexity is significantly lower than that of ROTP${\omega}$ since the NT algorithm is able to avoid solving any optimization problem like (\ref{ROT-model}).  In this paper, we investigate the ROTP-type algorithms from the acceleration perspective by showing that the  HB technique is able to improve the performance of  the ROTP-type algorithms.

The HB method introduced by Polyak \cite{P64} can be seen as a two-step method which combines the momentum term and gradient descent direction. In recent years, HB has found wide applications in image processing, data analysis, distributed optimization and undirected networks \cite{ADR22,GOP17,KBGY22,LRP16,LCWW21,MRJ21,UPS22,XK20}. The theoretical analysis (global convergence and local convergence rate) for HB methods has been investigated by several researchers. For example, the linear convergence rate of HB for unconstrained convex optimization problem was established by Aujol et. al \cite{ADR22}; Mohammadi et. al \cite{MRJ21} analyzed the relation between the convergence rate of HB and its variance amplification when the objective function of the problem is strongly convex and quadratic; Xin and Khan \cite{XK20} showed that the distributed HB method  with appropriate  parameters attains a global $R$-linear rate, and it has potential acceleration compared with some first-order methods for   ill-conditioned problems. Other acceleration techniques including the Nesterov's one can be found in \cite{KBGY22,LRP16,LCWW21,MRJ21}.

In this paper, we merge the optimal $k$-thresholding and HB acceleration technique to form the following algorithms for the SLI problem formulated as (\ref{model}):
\begin{itemize}
\item Heavy-ball-based optimal $k$-thresholding (HBOT),
\item Heavy-ball-based optimal $k$-thresholding pursuit (HBOTP),
\item Heavy-ball-based relaxed optimal $k$-thresholding ($\textrm{HBROT}\omega$),
 \item Heavy-ball-based relaxed optimal $k$-thresholding pursuit ($\textrm{HBROTP}\omega$),
\end{itemize}
where the integer parameter $\omega$ denotes the number of times for solving \eqref{ROT-model} at every iteration.
The global convergence of these algorithms is established in this paper under the restricted isometry property (RIP) introduced by Cand\`{e}s  and Tao \cite{CT05}, and the main results are summarized in Theorems \ref{theorem-OTP} and \ref{theorem-main}.
The performances of HBROTP (i.e., $\textrm{HBROTP}\omega$ with $\omega=1$) and several existing algorithms such as ROTP2 \cite{Z20}, partial gradient ROTP (PGROTP) \cite{MZKS22}, $\ell_1$-minimization   \cite{CDS98},   orthogonal matching pursuit (OMP) \cite{E10,TG07} and projected  linearized Bregman method (PLB) \cite{BPR21}  are compared  through numerical experiments. The phase transition with Gaussian random data is adopted to demonstrate the performances of the proposed algorithms for SLI problems.

The algorithm development for linear inverse problems is usually model-based in the sense that different formulations of the problem require different algorithms. The $\ell_1$-minimization method is naturally applied to the model (\ref{L1-min}) and a more general convex optimization solver can be directly used to handle the LASSO problem \eqref{Lasso}. However, $\ell_1$-minimization and LASSO solvers are not convenient to solve the  problem   (\ref{model}) for which a thresholding method  might be more suitable. The optimal $k$-thresholding method is proposed to enhance the success rates and stability of existing hard thresholding algorithms.   Unlike $\ell_1$-minimization and LASSO  solvers, the hard or optimal $k$-thresholding procedures can reconstruct any prescribed (interested) $k$ significant components of the target signals without the need to reconstruct the whole signal. Also, recent study in \cite{ZL22} indicates that a certain modification of the optimal $k$-thresholding method  may lead to a fast and efficient algorithm  which has  far lower computational cost than most existing algorithms including $\ell_1$-minimization and LASSO solvers. Thus   a further study of the optimal $k$-thresholding algorithms on their acceleration, simplification and modification remains interesting and important from both viewpoints of practical applications and algorithmic development itself.

While our discussion in this paper is focused on hard/optimal thresholding algorithms, it is worth briefly mentioning  the class of soft thresholding methods  which is   widely used for signal processing as well. The soft thresholding method can be derived in different ways. Taking the model \eqref{L1-min} as an example, a soft thresholding method can be developed from the Bregman regularization framework  \cite{YOG08}, which involves solving the convex subproblem \eqref{Lasso} at each iteration. Based on \eqref{Lasso}, using linearization and $\ell_2$-proximity  can lead to the  linearized Bregman (LB) methods \cite{CDB16,YOG08,Y10}, which is a class of soft thresholding methods. Moreover, linearization combined with Krylov subspace projection can also yield a soft thresholding method such as the PLB in \cite{BPR21}. Other soft thresholding methods can be found in \cite{BT09,BSR17,LZC16}. The soft thresholding method needs to select a regularization parameter, but how to select such a parameter so that the algorithm can guarantee to solve a SLI problem  remains an open question. The numerical experiments in Sections \ref{phase_trans} and \ref{deblur_image} indicate that the HBROTP algorithm proposed in this paper might be more stable and robust than $\ell_1$-minimization and PLB for data reconstruction in many cases.

This paper is organized as follows. In Section \ref{sec-prelim}, we introduce some notations, definitions, useful inequalities and   algorithms. In Section \ref{OT-analysis}, we discuss the error bounds and convergence of HBOT and  HBOTP under the  RIP. The error bounds for HBROT$\omega$ and  HBROTP$\omega$ are  given in Section \ref{ROT-analysis}. Numerical results from synthetic signals and real images are reported in Section \ref{simulation}.

\section{Preliminary and algorithms}\label{sec-prelim}
  \subsection{Notations}\label{sec-not}
   Denote by $N:=\{1,2,\ldots,n\}.$  Given a subset  $\Omega\subseteq N,$  $\overline{\Omega}:=N\setminus\Omega$  denotes the complement set  of $\Omega$ and $|\Omega|$ denotes its cardinality. For a vector $z\in\mathbb{R}^{n}$,  the support of $z$ is represented as $ \textrm{supp} (z):=\{i\in N:z_i\neq 0\}$, and the vector $z_\Omega\in \mathbb{R}^n$ is obtained by zeroing out the elements of $z$ supported on $\overline{\Omega}$ and retaining those supported on $\Omega.$  Given the sparse level $k$,   $\mathcal{L}_k(z)$ denotes the index set of the  $k$ largest absolute entries of $z.$ As usual, $\mathcal{H}_k(z):=z_{\mathcal{L}_k(z)}$ is called the hard thresholding of $z.$ The symbols $\|\cdot\|_1$ and $\|\cdot\|_2$ represent $\ell_1$-norm and $\ell_2$-norm of a vector, respectively. Throughout the paper, ${\bf e}$ denotes the vector of ones. $\mathcal{W}^k$ and $ \mathcal{P}^k $ are two sets in $\mathbb{R}^n$ defined as
$$\mathcal{W}^k=\{ w \in \mathbb{R}^n : ~ { \bf e}^T w=k, ~ w \in\{0,1\}^n\}, ~ \mathcal{P}^k=\{ w\in\mathbb{R}^n: { \bf e}^T w=k, ~ 0\leq w \leq \bf e\} . $$

\subsection{Definitions and basic inequalities}\label{sec-inq}
Let us first recall the  restricted isometry property (RIP) of a  matrix   and the optimal $k$-thresholding operator $Z^{\#}_k(\cdot). $

\begin{definition} \cite{CT05} \label{def-RIC}
Given a matrix $A\in \mathbb{R}^{m\times n}$ with $m< n,$ the $k$th order restricted isometry constant (RIC) of $A$, denoted by
$\delta_k,$ is the smallest nonnegative number  $\delta$ such that
\begin{equation}\label{def-RIC-1}
(1-\delta){\left\| u \right\|}^2_2\leq  {\left\| Au \right\|}^2_2\leq (1+\delta){\left\| u \right\|}^2_2
\end{equation}
for all $k$-sparse vectors $u\in \mathbb{R}^n$. The matrix $A$ is
said to satisfy the RIP of order $k$ if $\delta_k<1$.
\end{definition}

\begin{definition} \cite{Z20,ZL21} \label{optimal-operator}
Given a vector $u \in \mathbb{R}^n$, let  $w^* (u)$   be the solution of the binary optimization problem $$\underset{w}{\min}\left\{ \|y- A (u  \circ   w) \|^2_2 : ~ w\in \mathcal{W}^k \right\}.$$ Then the $k$-sparse vector $Z^{\#}_k (u):= u \circ  w^*(u) $ is called the optimal $k$-thresholding of $u , $  and  $Z^{\#}_k(\cdot)$  is called  the optimal $k$-thresholding operator.
\end{definition}

The two lemmas below will be  used for the analysis in Sections \ref{OT-analysis} and \ref{ROT-analysis}.

\begin{lemma} \emph{\cite{F11}} \label{lem-basic-ineq}
Let $u\in \mathbb{R}^n$, $v \in \mathbb{R}^m$, $W\subseteq N$  and $ t \in N .$

{\rm (i)}   If  $ |W\cup  \textrm{supp} (u)|\leq t$, then
$$ \left\|\left[ (I-A^ T A)u\right]_W\right\|_2\leq\delta_t {\| u \|}_2.$$

{\rm (ii)}    If  $ |W|\leq t$, then
$$ \left\| \left( A^ Tv\right)_W \right\|_2\leq\sqrt{1+\delta_t} {\| v \|}_2.$$

\end{lemma}

\begin{lemma} \emph{\cite{SZZM22}}\label{lem-two-level-geometric}
Let $\{a^p\}\subseteq\mathbb{R} ~ (p= 0, 1, \dots)$ be a nonnegative sequence satisfying
\begin{equation*}\label{lem-tlgeom-1}
a^{p+1}\leq b_1a^{p}+b_2a^{p-1}+b_3
\end{equation*}
 for   $  p\geq 1,$ where  $b_1,b_2$ and $ b_3\geq 0 $ are constants and $b_1+b_2<1$. Then
\begin{equation*}\label{lem-tlgeom-2}
a^{p}\leq\theta^{p-1}\left(a^{1}+(\theta-b_1)a^{0}\right)+\frac{b_3}{1-\theta}
\end{equation*}
 for    $  p\geq 2, $ where $0\leq \theta <1$ is a constant given by $ \theta=(b_1+\sqrt{b_1^2+4b_2})/2 <1. $
\end{lemma}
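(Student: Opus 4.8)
The plan is to reduce the two-step recursion to a one-step recursion by finding a suitable combination $c^p := a^p + \alpha\, a^{p-1}$ that satisfies $c^{p+1} \le \theta\, c^p + b_3$ for an appropriate constant $\alpha \ge 0$, and then iterate this scalar inequality. First I would look for $\theta$ and $\alpha$ so that the inequality $a^{p+1} \le b_1 a^p + b_2 a^{p-1} + b_3$ can be rewritten (after adding $\alpha\, a^p$ to both sides) in the form $a^{p+1} + \alpha a^p \le \theta(a^p + \alpha a^{p-1}) + b_3$. Matching coefficients term by term gives the two requirements $\theta = b_1 + \alpha$ and $\theta\alpha = b_2$, which combine into the quadratic $\alpha^2 + b_1\alpha - b_2 = 0$; taking the nonnegative root yields $\alpha = (-b_1 + \sqrt{b_1^2 + 4b_2})/2 \ge 0$, and then $\theta = b_1 + \alpha = (b_1 + \sqrt{b_1^2 + 4b_2})/2$, exactly as in the statement. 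Since all terms are nonnegative, one checks $\alpha a^p \ge 0$, so adding it back in is legitimate and the inequality $c^{p+1} \le \theta c^p + b_3$ holds for all $p \ge 1$ with $c^p = a^p + \alpha a^{p-1}$.

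Next I would verify $0 \le \theta < 1$. Nonnegativity is clear from $b_1, b_2 \ge 0$. For $\theta < 1$, note that the function $q(t) = t^2 - b_1 t - b_2$ is increasing for $t \ge b_1/2$ and satisfies $q(1) = 1 - b_1 - b_2 > 0$ by the hypothesis $b_1 + b_2 < 1$; since $\theta$ is the larger root of $q$, which lies in $[b_1/2, \infty)$, and $q(\theta) = 0 < q(1)$, monotonicity forces $\theta < 1$.

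Then the one-step recursion $c^{p+1} \le \theta c^p + b_3$ unwinds in the standard way: for $p \ge 2$,
\begin{equation*}
c^p \le \theta^{p-1} c^1 + b_3(1 + \theta + \cdots + \theta^{p-2}) \le \theta^{p-1} c^1 + \frac{b_3}{1-\theta}.
\end{equation*}
Finally I would translate back: $c^1 = a^1 + \alpha a^0 = a^1 + (\theta - b_1) a^0$, and since $a^p \le c^p$ (again using $\alpha a^{p-1} \ge 0$), this gives
\begin{equation*}
a^p \le \theta^{p-1}\bigl(a^1 + (\theta - b_1) a^0\bigr) + \frac{b_3}{1-\theta},
\end{equation*}
which is the claimed bound.

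I do not anticipate a serious obstacle here; the only mildly delicate point is guessing the right linear combination $c^p = a^p + \alpha a^{p-1}$ and checking that the coefficient-matching system has a nonnegative solution with $\theta < 1$, which is exactly where the hypothesis $b_1 + b_2 < 1$ gets used. Everything else is routine manipulation of nonnegative scalars and a geometric series, so no structural difficulty is expected.
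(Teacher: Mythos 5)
Your proof is correct. The paper omits the proof of this lemma entirely (declaring it ``straightforward''), so there is no argument to compare against; your reduction to the one-step recursion $c^{p+1}\le\theta c^p+b_3$ via the auxiliary sequence $c^p=a^p+\alpha a^{p-1}$, with $\alpha=\theta-b_1$ the nonnegative root of $\alpha^2+b_1\alpha-b_2=0$, is the standard argument and all steps check out, including the verification that $b_1+b_2<1$ forces $\theta<1$. One cosmetic remark: the symbol $\alpha$ is already used in the paper for the algorithm's step size, so a different letter for the combination coefficient would avoid confusion if this proof were inserted into the text.
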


\subsection{Algorithms}\label{sec-alg}

Given iterates $ x^{p-1}$ and $ x^p $, the heavy-ball search direction is defined as $$ d^{p}=\alpha A^ T(y-Ax^p)+\beta (x^p-x^{p-1}),$$ where $ \alpha> 0$ and $  \beta\geq 0  $ are two parameters. We use
 the optimal $k$-thresholding operator  $Z^{\#}_k(\cdot)$ to generate the new iterate $x^{p+1}$, i.e., $$x^{p+1}=Z^{\#}_k(x^p+d^p),$$ which is called the heavy-ball-based optimal $k$-thresholding (HBOT) algorithm.   Combining HBOT and orthogonal projection (i.e., the least squares problem (\ref{algorithm-OTP-3}) below) leads to
 the heavy-ball-based optimal $k$-thresholding pursuit (HBOTP) algorithm.
   HBOT and HBOTP can be seen  as the multi-step extensions of the OT and OTP algorithms in \cite{Z20,ZL21}.
The two algorithms are formally described as follows.

\vskip 0.08in
 \textbf{$\textrm{HBOT}$  and  $\textrm{HBOTP}$ algorithms. }
Input the data $(A,y,k)$ and two initial points $x^0 $ and $ x^1$. Choose the parameters $\alpha>0$ and $ \beta\geq 0$.
 \begin{itemize}
 \item [S1] At $x^p$, set
 \begin{equation}\label{algorithm-OTP-1}
u^p=x^p-\alpha A^ T(Ax^p-y)+\beta (x^p-x^{p-1}).
\end{equation}

 \item [S2]  Solve the  optimization problem
  \begin{equation}\label{algorithm-OTP-2}
w^*=\arg \min_{w} \{ {\| y-A(u^p\circ{w})\|}_2^2: ~ { \bf e}^T w=k, ~ w \in\{0,1\}^n\}.
\end{equation}

 \item[S3]   Generate the next iterate $x^{p+1}  $ as follows:
 \begin{itemize} \item[] For HBOT, let $x^{p+1}=u^p\circ{w}^*$.
 \item[] For HBOTP, let  $S^{p+1}= \textrm{supp} (u^p\circ{w}^*)$ and  $x^{p+1}$ be the solution to the least squares problem
  \begin{equation}\label{algorithm-OTP-3}
x^{p+1}=\arg \min_{x\in \mathbb{R}^n}  \{ {\| y-Ax\|}_2^2:  \textrm{supp} (x)\subseteq S^{p+1}\}.
\end{equation}
\end{itemize}
Repeat S1-S3 above until a certain stopping criterion is met.
\end{itemize}

In general, the computational cost for solving the binary optimization problem \eqref{algorithm-OTP-2} is usually high  \cite{BT18,CAP08}. Replacing \eqref{algorithm-OTP-2} by its convex relaxation $$\arg \min_{w} \left\{ \|y- A (u  \circ   w) \|^2_2 : ~ w\in \mathcal{P}^k \right\} $$  yields the next heavy-ball-based relaxed optimal $k$-thresholding (HBROT$\omega$)   and the heavy-ball-based relaxed optimal $k$-thresholding pursuit (HBROTP$\omega$) algorithms, where $\omega$ represents the times of solving such a convex relaxation problem at each iteration (which, as pointed out in \cite{Z20}, can be interpreted as the times of data compression within each iteration).  As $\omega=1$, we simply use HBROT and HBROTP to denote the algorithms HBROT$1$ and HBROTP$1$, respectively. Clearly, when $\alpha=1$ and $\beta=0$, HBROT$\omega$ and HBROTP$\omega$ reduce, respectively,  to  ROT$\omega$ and ROTP$\omega$ in \cite{ZL21}.

\vskip 0.08in
 \textbf{$\textrm{HBROT}\omega$ and $\textrm{HBROTP}\omega$ algorithms.}
Input the data $(A,y,k)$, two initial points $x^0, x^1$ and $\omega$. Choose the parameters $\alpha>0$ and $ \beta\geq 0$.
\begin{itemize}
 \item [S1] At $x^p$, calculate $u^p$ by \eqref{algorithm-OTP-1}.

 \item [S2]  Set $v\leftarrow u^p$. Perform the following loops to produce
the vectors ${w}^{(j)} (j=1,\ldots,\omega)$:

  \textbf{for}  $j=1:\omega$   \textbf{do}
\begin{equation}\label{algorithm-ROTP-2}
{w}^{(j)}=\arg \min_{w}  \{ {\| y-A(v\circ{w})\|}_2^2: \ \ { \bf e}^T w=k, \ \ 0\leq w \leq \bf e\},
\end{equation}
and set $v\leftarrow v\circ{w}^{(j)}$.

  \textbf{end}

 \item [S3]  Let $x^{\sharp}=\mathcal{H}_k(u^p\circ{w}^{(1)}\circ\cdots\circ {w}^{(\omega)})$. Generate the next iterate $x^{p+1}$ as follows:
 \begin{itemize}
 \item[] For $\textrm{HBROT}\omega$, let $x^{p+1}=x^{\sharp}$.
\item[] For $\textrm{HBROTP}\omega$, let $S^{p+1}= \textrm{supp} (x^{\sharp})$, and  $x^{p+1}$ be the solution to the least squares problem
 \begin{equation}\label{algorithm-ROTP-3}
x^{p+1}=\arg \min_{x\in \mathbb{R}^n}  \{ {\| y-Ax\|}_2^2:  \textrm{supp} (x)\subseteq S^{p+1}\}.
\end{equation}
\end{itemize}
 Repeat S1-S3 above until a certain stopping criterion is met.
\end{itemize}

The choice of stopping criterions depends on the application scenarios. For instance, one can simply prescribe the maximum number of iterations, $p_{\max},$ which allows the algorithm to perform a total of $ p_{\max} $ iterations. One can also terminate the algorithm when  $ \|y-Ax^p\|_2\leq \varepsilon,$ where $ \varepsilon>0$ is a prescribed tolerance depending on the noise level.

\begin{remark}\label{sparsity-complexity}
 The common feature of the proposed algorithms and existing hard-type thresholding algorithms is that  the solutions generated by the algorithms  depend  on the input value of $k$, which reflects the user's interest in reconstructing how many significant components of the target signal $x^*$ whose sparsity level is denoted by $k^*.$ In many scenarios, one needs to reconstruct only a few largest components in magnitude of the target signal, instead of the whole signal. In such cases, the user is free to set the desired number $k$ for the proposed algorithms. The quality of reconstruction depends on the input value of $k.$  In fact, the main theorems established in later sections imply that under the RIP of certain order $\widehat{k},$ the solution generated by the algorithms is the best $k$-term approximation to the true signal $x^*$  when $ k < k^*,$ and it  coincides with $x^*$ when $k$ satisfies that $k^* \leq k < \widehat{k}.$ When  $k \geq \widehat{k},$ there would be no guarantee for the proposed algorithms (including existing ones) to recover the signal.
 If the user expects to reconstruct the whole signal as possible, some information from theory and numerical experiments might be useful for the choice of $k.$ For instance, we may choose $k$ as follows.
 \begin{itemize}

 \item[1)] The prior information on the sparsity level $k^*$ of the signal might be available in some situations. In this case, just set $ k= k^*. $

 \item[2)] It is well known that the signal can be very likely to be recovered by a certain algorithm if its sparsity level $ k^*$  is lower than the half of the spark of the measurement matrix in $ \mathbb{R}^{m\times n} $ \cite{E10}.  So it makes sense to choose  $k < (m+1)/2$ since the spark is bounded above by $m+1.$

 \item[3)] A large body of simulations and  applications indicate that many algorithms work well when the sparsity level of signal is lower than $m/3,$ and many such signals can be generally reconstructed by some existing algorithms. Thus it is also reasonable to set   $k \leq m/3 $ in the proposed algorithms in order to achieve a better chance for the signal to be recovered.

 \item[4)] The number $k$ can  be also suggested by experiments including the phase transition of algorithms which sheds light on certain relation  between the factors $(k,m,n)$ and the recovery success of signals by given algorithms.
         \end{itemize}
\end{remark}

\begin{remark}
  Since $x^{p-1}, x^p$ are two $k$-sparse vectors in $\mathbb{R}^{ n}$ and $A\in \mathbb{R}^{m\times n}$, the computations of $Ax^p$ and $\beta (x^p-x^{p-1})$  in \eqref{algorithm-OTP-1} need at most $m k$ and $2k$ multiplication operations, respectively. Thus  S1 in HBROTP$\omega$ requires at most $mn+m+mk+2k$ multiplication operations.
As pointed out in \cite[Section 5.1]{ZL21},  S2 and S3 in HBROTP$\omega$ requires $O(n^{3.5}L+n\log k)+(mk^2+k^3/3)$ flops, in which $L$ is the length of the problem data encoding in binary. Since $k\ll m$, the computational complexity of HBROTP$\omega$ at each iteration is about $O(n^{3.5}L+mn+n\log k+mk^2)$.

\end{remark}

\section{Analysis of  HBOT and  HBOTP }\label{OT-analysis}

In this section, we establish the error bounds for HBOT and  HBOTP under the RIP of order $k$ or $k+1$. Taking into account the noise influence, the error bound provides the estimation of the distance between the problem solution and the iterates generated by the algorithms. Thus the error bound is an important measurement of the quality of iterates as the approximation to the true solution of the linear inverse problem. In noiseless situations, the error bound implies the global convergence of the algorithms under the RIP assumption.   Let us first introduce
the following property, which is a combination of Lemmas 3.3 and 3.6  in \cite{ZL21}.

\begin{lemma} \emph{\cite{ZL21}} \label{lem-Az-sk}
Let $z$ be a $(2k)$-sparse vector. Then
$ \|Az\|_2^2\geq(1-2\delta_k-\delta_{k+s(k)})\|z\|_2^2$
where
\begin{align}\label{lem-Az-sk-2}
 s(k)=\left\{
\begin{array}{ll}
1, ~ &\textrm{ if }~ k~ \textrm{is an odd  number},\\
0, ~&\textrm{ if }~ k~ \textrm{is an even number}.\\
\end{array}
\right.
\end{align}
\end{lemma}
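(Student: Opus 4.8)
The plan is to bound $\|Az\|_2^2$ from below through a \emph{two-level} decomposition of $\mathrm{supp}(z)$: a first split of the support into two index sets of size at most $k$, followed by a further split of each into an ``almost half'', arranged so that every cross term that appears can be estimated by Lemma~\ref{lem-basic-ineq}(iii) with a restricted order no larger than $k+s(k)$, rather than the crude value $2k$ that a single split would force. The diagonal terms will then be handled directly by the lower RIP inequality.

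Concretely, enlarging $\mathrm{supp}(z)$ to a set $S$ with $|S|=2k$ if necessary, I would write $S=T_1\cup T_2$ with $T_1\cap T_2=\emptyset$ and $|T_1|=|T_2|=k$, so that $z=z_{T_1}+z_{T_2}$, and then partition each $T_i=P_i\cup Q_i$ with $|P_i|=\lfloor k/2\rfloor$ and $|Q_i|=\lceil k/2\rceil$; note that $\lceil k/2\rceil+\lceil k/2\rceil=k+s(k)$, $\lfloor k/2\rfloor+\lceil k/2\rceil=k$, and $2\lfloor k/2\rfloor\le k$. Expanding the square gives
\[
\|Az\|_2^2=\|Az_{T_1}\|_2^2+\|Az_{T_2}\|_2^2+2\langle Az_{T_1},Az_{T_2}\rangle,
\]
and since each $z_{T_i}$ is $k$-sparse, the lower RIP inequality of Definition~\ref{def-RIC} yields $\|Az_{T_i}\|_2^2\ge(1-\delta_k)\|z_{T_i}\|_2^2$.

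For the cross term I would use bilinearity and $z_{T_i}=z_{P_i}+z_{Q_i}$ to expand $\langle Az_{T_1},Az_{T_2}\rangle$ into the four inner products $\langle Az_{R_1},Az_{R_2}\rangle$ with $R_1\in\{P_1,Q_1\}$ and $R_2\in\{P_2,Q_2\}$. Each such pair has disjoint supports with $|R_1|+|R_2|\le k$ except for $(R_1,R_2)=(Q_1,Q_2)$, for which $|R_1|+|R_2|=k+s(k)$; hence Lemma~\ref{lem-basic-ineq}(iii) and the monotonicity of $\delta$ give
\[
|\langle Az_{T_1},Az_{T_2}\rangle|\le\delta_{k+s(k)}\big(\|z_{P_1}\|_2\|z_{P_2}\|_2+\|z_{Q_1}\|_2\|z_{Q_2}\|_2\big)+\delta_k\big(\|z_{P_1}\|_2\|z_{Q_2}\|_2+\|z_{Q_1}\|_2\|z_{P_2}\|_2\big).
\]
Applying $2ab\le a^2+b^2$ to each product and using $\|z_{P_i}\|_2^2+\|z_{Q_i}\|_2^2=\|z_{T_i}\|_2^2$ together with $\|z_{T_1}\|_2^2+\|z_{T_2}\|_2^2=\|z\|_2^2$, each parenthesized sum is at most $\tfrac12\|z\|_2^2$, so $|\langle Az_{T_1},Az_{T_2}\rangle|\le\tfrac12(\delta_k+\delta_{k+s(k)})\|z\|_2^2$. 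Substituting this and the diagonal bounds into the expansion yields $\|Az\|_2^2\ge(1-\delta_k)\|z\|_2^2-(\delta_k+\delta_{k+s(k)})\|z\|_2^2=(1-2\delta_k-\delta_{k+s(k)})\|z\|_2^2$, which is the claim.

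The RIP estimate for the diagonal terms and the arithmetic--geometric bookkeeping at the end are routine; the step I would be most careful about is the choice of the secondary partition into blocks of sizes $\lfloor k/2\rfloor$ and $\lceil k/2\rceil$. It is precisely this uneven split that caps the restricted order of the two ``mixed'' products $\langle Az_{P_1},Az_{Q_2}\rangle$ and $\langle Az_{Q_1},Az_{P_2}\rangle$ at exactly $k$ (so that $\delta_{k+s(k)}$ ends up multiplying only half of $\|z\|_2^2$), and it lets the parities of $k$ be treated uniformly through $s(k)$. A single split $z=z_{T_1}+z_{T_2}$ alone would only control the cross term at order $2k$ and produce the much weaker constant $1-\delta_k-\delta_{2k}$, so the two-level decomposition is the real point of the argument.
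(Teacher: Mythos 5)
Your proof is correct: the uneven secondary split into blocks of sizes $\lfloor k/2\rfloor$ and $\lceil k/2\rceil$ is exactly what produces the constant $1-2\delta_k-\delta_{k+s(k)}$ (since $2\lceil k/2\rceil=k+s(k)$), and the diagonal, cross-term, and AM--GM steps all check out. The paper itself gives no proof, citing Lemmas 3.3 and 3.6 of \cite{ZL21} instead, and your two-level decomposition is essentially the argument underlying that cited result.
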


Note that Lemma 3.4 in \cite{ZL21} was established for the sparsity level $k$ being an even number. We now establish the similar result even when $k$ is odd.
\begin{lemma}  \label{lem-IATA-sqrt5}
Let $h,z\in \mathbb{R}^n$ be two $k$-sparse vectors, and let $\widehat{w} \in \mathcal{W}^k$ be any $k$-sparse binary vector satisfied $ \textrm{supp} (h)\subseteq  \textrm{supp} (\widehat{w})$, then
\begin{equation}\label{lem-IATA-sqrt5-1}
  \| [ (I-A^ T A)(h-z)]\circ\widehat{w} \|_2\leq \sqrt{5} \delta_{k+s(k)} {\| h-z\|}_2,
\end{equation}
where $s(k)$ is given by \eqref{lem-Az-sk-2}.
\end{lemma}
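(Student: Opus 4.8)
The plan is to first observe that, writing $W:=\textrm{supp}(\widehat w)$ (so $|W|=k$), the vector $[(I-A^TA)(h-z)]\otimes\widehat w$ is simply the restriction $[(I-A^TA)(h-z)]_W$, since $\widehat w$ is the characteristic vector of $W$. So, setting $v:=h-z$, the goal becomes to show $\|[(I-A^TA)v]_W\|_2\le\sqrt5\,\delta_{k+s(k)}\|v\|_2$. I would exploit the hypothesis $\textrm{supp}(h)\subseteq W$ by splitting $v=v_W+v_{\overline W}$, where $v_W=h-z_W$ is $k$-sparse and supported in $W$, while $v_{\overline W}=-z_{\overline W}$ has support inside $\textrm{supp}(z)$, hence of cardinality at most $k$. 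Since $v_{\overline W}$ vanishes on $W$, one has
\[
[(I-A^TA)v]_W=[(I-A^TA)v_W]_W-[A^TAv_{\overline W}]_W .
\]

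For the first term, Lemma~\ref{lem-basic-ineq}(i) applied with index set $W$, vector $v_W$ and $t=k$ (note $|W\cup\textrm{supp}(v_W)|=|W|=k$) gives $\|[(I-A^TA)v_W]_W\|_2\le\delta_k\|v_W\|_2$. The second term is the crux, and is where the parity of $k$ enters. I would partition $W=W_1\cup W_2$ into disjoint blocks with $|W_1|,|W_2|\le\lceil k/2\rceil$, and likewise partition $\textrm{supp}(v_{\overline W})=T_1\cup T_2$ with $|T_1|,|T_2|\le\lceil k/2\rceil$ (possible because $|\textrm{supp}(v_{\overline W})|\le k$); let $v^{(j)}$ denote $v_{\overline W}$ restricted to $T_j$. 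Setting $r:=[A^TAv_{\overline W}]_W$, which is $k$-sparse and supported in $W$, I would expand $\|r\|_2^2=r^TA^TAv_{\overline W}=\sum_{i,j}(r_{W_i})^TA^TAv^{(j)}$. Each pair $W_i,T_j$ has disjoint supports (as $W\cap\textrm{supp}(v_{\overline W})=\emptyset$) with $|W_i|+|T_j|\le 2\lceil k/2\rceil=k+s(k)$, so Lemma~\ref{lem-basic-ineq}(iii), together with monotonicity of the restricted isometry constants, yields $|(r_{W_i})^TA^TAv^{(j)}|\le\delta_{k+s(k)}\|r_{W_i}\|_2\|v^{(j)}\|_2$. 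Summing over $i,j$ and using $\|r_{W_1}\|_2+\|r_{W_2}\|_2\le\sqrt2\,\|r\|_2$ and $\|v^{(1)}\|_2+\|v^{(2)}\|_2\le\sqrt2\,\|v_{\overline W}\|_2$ gives $\|r\|_2^2\le 2\delta_{k+s(k)}\|r\|_2\|v_{\overline W}\|_2$, hence $\|[A^TAv_{\overline W}]_W\|_2\le 2\delta_{k+s(k)}\|v_{\overline W}\|_2$.

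Finally I would combine the two estimates, using $\delta_k\le\delta_{k+s(k)}$ and then Cauchy--Schwarz together with $\|v_W\|_2^2+\|v_{\overline W}\|_2^2=\|v\|_2^2$ (valid since $W$ and $\overline W$ partition $N$):
\[
\|[(I-A^TA)v]_W\|_2\le\delta_k\|v_W\|_2+2\delta_{k+s(k)}\|v_{\overline W}\|_2\le\delta_{k+s(k)}\left(\|v_W\|_2+2\|v_{\overline W}\|_2\right)\le\sqrt5\,\delta_{k+s(k)}\|v\|_2,
\]
which is precisely \eqref{lem-IATA-sqrt5-1}.

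The step that needs the most care --- and essentially the only thing that differs from the even-$k$ argument of \cite{ZL21} --- is the halving: one must split \emph{both} $W$ and $\textrm{supp}(v_{\overline W})$ into two blocks of size at most $\lceil k/2\rceil$, so that every cross term $(r_{W_i})^TA^TAv^{(j)}$ has combined support size at most $2\lceil k/2\rceil$, which equals $k$ for even $k$ but $k+1$ for odd $k$; this is exactly what produces the order $k+s(k)$ and makes the bound go through in the odd case. The only other point to be careful about is the elementary fact that $\textrm{supp}(v_{\overline W})\subseteq\textrm{supp}(z)$ has at most $k$ elements, which is where the assumption $\textrm{supp}(h)\subseteq\textrm{supp}(\widehat w)$ gets used.
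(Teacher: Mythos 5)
Your proof is correct, but it takes a genuinely different route from the paper's. The paper proves the lemma by reduction: it quotes Lemma~3.4 of \cite{ZL21} verbatim for even $k$ (with constant $\sqrt5\,\delta_k$), and for odd $k$ it pads $\widehat w$ to a $(k+1)$-sparse binary vector $\overline w$ with $\mathrm{supp}(\widehat w)\subseteq\mathrm{supp}(\overline w)$, observes that restricting to the smaller support can only decrease the norm, regards $h,z$ as $(k+1)$-sparse, and applies the even case at level $k+1$ --- a three-line argument that leans entirely on the cited result. You instead give a self-contained first-principles proof (essentially reconstructing the internal mechanism of the cited lemma, but uniformly in the parity of $k$): the decomposition $v=v_W+v_{\overline W}$ with $v_{\overline W}=-z_{\overline W}$, the $\delta_k$ bound on the diagonal piece via Lemma~\ref{lem-basic-ineq}(i), the halving of both $W$ and $\mathrm{supp}(v_{\overline W})$ into blocks of size at most $\lceil k/2\rceil$ so that Lemma~\ref{lem-basic-ineq}(iii) applies at order $2\lceil k/2\rceil=k+s(k)$, and the final Cauchy--Schwarz step $\|v_W\|_2+2\|v_{\overline W}\|_2\le\sqrt5\,\|v\|_2$; all of these steps check out, including the edge cases where a block is empty. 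Your version buys transparency --- it makes visible exactly where the parity of $k$ enters and why the order $k+s(k)$ (rather than $2k$ or $\lceil 3k/2\rceil$) suffices --- at the cost of length; the paper's version buys brevity at the cost of sending the reader to \cite{ZL21} and of slightly obscuring the source of the $s(k)$ shift.
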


\begin{proof}
For given vectors $h,z,\widehat{w}$ satisfying the conditions of the lemma, from \cite[Lemma 3.4]{ZL21}, we get
\begin{equation}\label{pf-sqrt5-del-1}
 \| [ (I-A^ T A)(h-z)]\circ\widehat{w} \|_2\leq \sqrt{5} \delta_{k} {\| h-z\|}_2
\end{equation}
  for even number $k$. Therefore, we just need to show that \eqref{lem-IATA-sqrt5-1} also holds when $k$ is an odd number.

Indeed, assume that $ k$ is an odd number. Taking  a $(k+1)$-sparse binary vector $\overline{w} \in\mathcal{W}^{k+1}$ such that  ${\rm supp} (\widehat{w})\subseteq{\rm supp} (\overline{w})$,  we obtain
  \begin{align}\label{pf-sqrt5-del-2}
 \| [ (I-A^ T A)(h-z)]\circ\widehat{w} \|_2
 &= \left\| \left [ (I-A^ T A)(h-z)\right ]_{\textrm{supp} (\widehat{w})}\right\|_2\nonumber \\
 &\leq   \left\| \left[ (I-A^ T A)(h-z) \right ]_{\textrm{supp} (\overline{w})}\right\|_2 \nonumber \\
 &= \| [ (I-A^ T A)(h-z)]\circ\overline{w} \|_2.
\end{align}
  As $h$ and $z$ are two $k$-sparse vectors,  they are also  $(k+1)$-sparse vectors. Since $ \textrm{supp} (h)\subseteq  \textrm{supp} (\widehat{w})\subseteq  \textrm{supp} (\overline{w})$, and since $k+1$ is even (when $k$ is odd), applying  \eqref{pf-sqrt5-del-1} to this case yields
  \begin{equation}\label{pf-sqrt5-del-3}
  \| [ (I-A^ T A)(h-z)]\circ\overline{w} \|_2\leq \sqrt{5} \delta_{k+1} {\| h-z\|}_2.
\end{equation}
Combining \eqref{pf-sqrt5-del-2} and \eqref{pf-sqrt5-del-3}, we obtain
\begin{equation*}\label{pf-sqrt5-del-4}
  \| [ (I-A^ T A)(h-z)]\circ\widehat{w} \|_2\leq \sqrt{5} \delta_{k+1} {\| h-z\|}_2
\end{equation*}
 for odd number $k$. We  conclude that \eqref{lem-IATA-sqrt5-1} holds for any positive integer  $k$.  \hfill $\Box$
\end{proof}

The main results for HBOT and HBOTP are summarized as follows.

\begin{theorem}\label{theorem-OTP}
Let $x \in \mathbb{R}^n$ be a solution to the system $y= Ax+\nu$ where $\nu$ is a noise vector. Assume that the RIC, $\delta_{k+s(k)},$ of $A$ and the parameters $\alpha,\beta$ in HBOT and HBOTP satisfy that  $ \delta_{k+s(k)}<\gamma^*$ and
\begin{equation}\label{th-OTP-0}
 0\leq\beta<\frac{1+1/\eta}{1+\sqrt{5} \delta_{k+s(k)}}-1,~
\frac{1+2\beta-1/\eta}{1-\sqrt{5} \delta_{k+s(k)}}<\alpha<\frac{1+1/\eta}{1+\sqrt{5} \delta_{k+s(k)}},
\end{equation}
where $\gamma^*(\approx 0.2274)$ is the unique root of the equation $5\gamma^3+5\gamma^2+3\gamma-1=0$ in the interval $(0,1)$,  $s(k)$ is given by \eqref{lem-Az-sk-2} and $\eta:=\sqrt{\frac{1+\delta_k}{1-2\delta_k-\delta_{k+s(k)}}}$.
Then the sequence $\{ x^p\}$ generated by  HBOT or HBOTP obeys
\begin{equation}\label{th-OTP-1}
 {\| x_S-x^p\|}_2\leq C_1\theta^{p-1} +
C_2\|  \nu'\|_2,
\end{equation}
where $S:=\mathcal{L}_k(x)$, $\nu':=\nu+Ax_{\overline{S}}$, and the quantities  $C_1,C_2$  are defined as
\begin{equation}\label{th-OTP-2}
C_1={\|  x_S-x^1\|}_2+({\theta}-{b}){\|  x_S-x^0\|}_2, ~
C_2=\frac{2+(1+\delta_k)\alpha}{(1-\theta) \sqrt{1-2\delta_k-\delta_{k+s(k)}}},
\end{equation}
and $\theta:=(b+\sqrt{b^2+4\eta\beta})/2<1$ is ensured under the conditions \eqref{th-OTP-0} and  the constant $b$ is given by
\begin{equation}\label{th-OTP-3}
b:=\eta\left(|1+\beta-\alpha|+\sqrt{5}\alpha \delta_{k+s(k)}\right).
\end{equation}
\end{theorem}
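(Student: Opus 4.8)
The plan is to reduce the claim to the two-level geometric recursion of Lemma~\ref{lem-two-level-geometric} applied to the sequence $a^p:=\|x_S-x^p\|_2$. Writing $h:=x_S$ (a $k$-sparse vector) and $\nu':=\nu+Ax_{\overline{S}}$, so that $y=Ah+\nu'$, the goal is to prove, for every $p\ge1$,
\[
\|h-x^{p+1}\|_2\ \le\ b\,\|h-x^p\|_2+\eta\beta\,\|h-x^{p-1}\|_2+\frac{2+(1+\delta_k)\alpha}{\sqrt{1-2\delta_k-\delta_{k+s(k)}}}\,\|\nu'\|_2,
\]
with $b$ as in \eqref{th-OTP-3}. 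Since the last coefficient equals $(1-\theta)C_2$, Lemma~\ref{lem-two-level-geometric} with $b_1=b$, $b_2=\eta\beta$, $b_3=(1-\theta)C_2\|\nu'\|_2$ then yields \eqref{th-OTP-1}--\eqref{th-OTP-2} verbatim (note $\theta-b=\theta-b_1\ge0$, so $C_1\ge0$), provided $b+\eta\beta<1$.

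To obtain the recursion, fix $p\ge1$ and let $\widehat{w}\in\mathcal{W}^k$ be the binary vector with $\textrm{supp}(\widehat{w})=S$ (legitimate since $|S|=k$), so $u^p\otimes\widehat{w}=(u^p)_S$ is feasible for \eqref{algorithm-OTP-2}. Optimality of $w^*$ — and, for HBOTP, the fact that the least-squares step \eqref{algorithm-OTP-3} does not increase the residual — gives $\|y-Ax^{p+1}\|_2\le\|y-A(u^p)_S\|_2$. Since $x^{p+1}$ and $h$ are $k$-sparse, $h-x^{p+1}$ is $2k$-sparse, so Lemma~\ref{lem-Az-sk} together with $y-Ax^{p+1}=A(h-x^{p+1})+\nu'$ and the triangle inequality gives $\|y-Ax^{p+1}\|_2\ge\sqrt{1-2\delta_k-\delta_{k+s(k)}}\,\|h-x^{p+1}\|_2-\|\nu'\|_2$. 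On the other hand, using $h=h\otimes\widehat{w}$ one has $y-A(u^p)_S=A[(h-u^p)\otimes\widehat{w}]+\nu'$, and the RIP upper bound applied to the $k$-sparse vector $(h-u^p)\otimes\widehat{w}$ gives $\|y-A(u^p)_S\|_2\le\sqrt{1+\delta_k}\,\|(h-u^p)\otimes\widehat{w}\|_2+\|\nu'\|_2$. Chaining these estimates and recalling $\eta=\sqrt{(1+\delta_k)/(1-2\delta_k-\delta_{k+s(k)})}$ yields $\|h-x^{p+1}\|_2\le\eta\,\|(h-u^p)\otimes\widehat{w}\|_2+\tfrac{2}{\sqrt{1-2\delta_k-\delta_{k+s(k)}}}\|\nu'\|_2$.

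It remains to bound $\|(h-u^p)\otimes\widehat{w}\|_2$. Substituting $A^Ty=A^TAh+A^T\nu'$ into \eqref{algorithm-OTP-1} and using $x^p-x^{p-1}=(h-x^{p-1})-(h-x^p)$ gives
\[
h-u^p=(1+\beta-\alpha)(h-x^p)+\alpha(I-A^TA)(h-x^p)-\beta(h-x^{p-1})-\alpha A^T\nu'.
\]
Multiplying by $\widehat{w}$ and applying the triangle inequality: the first term contributes $|1+\beta-\alpha|\,\|h-x^p\|_2$; the second is bounded by $\sqrt5\,\alpha\,\delta_{k+s(k)}\,\|h-x^p\|_2$ via Lemma~\ref{lem-IATA-sqrt5} (applicable since $h,x^p$ are $k$-sparse and $\textrm{supp}(h)\subseteq S=\textrm{supp}(\widehat{w})$ — here one uses that $x^1$ is also $k$-sparse); the third gives $\beta\,\|h-x^{p-1}\|_2$; and the fourth is bounded by $\alpha\sqrt{1+\delta_k}\,\|\nu'\|_2$ via Lemma~\ref{lem-basic-ineq}(ii). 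Multiplying the result by $\eta$ and using $\eta\alpha\sqrt{1+\delta_k}+\tfrac{2}{\sqrt{1-2\delta_k-\delta_{k+s(k)}}}=\tfrac{2+(1+\delta_k)\alpha}{\sqrt{1-2\delta_k-\delta_{k+s(k)}}}$ produces the displayed recursion with $b=\eta(|1+\beta-\alpha|+\sqrt5\,\alpha\,\delta_{k+s(k)})$, i.e.\ \eqref{th-OTP-3}.

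The main obstacle is the parameter bookkeeping: checking that Lemma~\ref{lem-two-level-geometric} applies, i.e.\ that $\eta$ is well-defined and $b+\eta\beta<1$ (equivalently $\theta<1$). From $\delta_k\le\delta_{k+s(k)}<\gamma^*$ and $3\gamma^*<1$ we get $1-2\delta_k-\delta_{k+s(k)}>0$, so $\eta$ is real, and $\sqrt5\,\delta_{k+s(k)}<\sqrt5\,\gamma^*<1$. Now $b+\eta\beta<1$ is equivalent to $|1+\beta-\alpha|+\sqrt5\,\alpha\,\delta_{k+s(k)}+\beta<1/\eta$; splitting on the sign of $1+\beta-\alpha$, this unravels for $\alpha\le1+\beta$ to $\alpha>\frac{1+2\beta-1/\eta}{1-\sqrt5\,\delta_{k+s(k)}}$ and for $\alpha>1+\beta$ to $\alpha<\frac{1+1/\eta}{1+\sqrt5\,\delta_{k+s(k)}}$ — exactly the two bounds in \eqref{th-OTP-0}, so the hypotheses force $\theta<1$. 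Finally, $\gamma^*$ is precisely the threshold making \eqref{th-OTP-0} consistent: when $\beta=0$ the interval for $\alpha$ is nonempty iff $\sqrt5\,\delta_{k+s(k)}\,\eta<1$, which after clearing denominators and using $\delta_k\le\delta_{k+s(k)}=:\delta$ becomes $5\delta^3+5\delta^2+3\delta-1<0$, i.e.\ $\delta<\gamma^*$, with $\gamma^*$ the unique root of $5\gamma^3+5\gamma^2+3\gamma-1$ in $(0,1)$ (uniqueness being clear since the cubic is strictly increasing on $(0,\infty)$). Invoking Lemma~\ref{lem-two-level-geometric} then completes the proof.
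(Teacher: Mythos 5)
Your proposal is correct and follows essentially the same route as the paper's proof: the same choice of a $k$-sparse binary $\widehat{w}$ supported on $S$, the same chain $\|y-Ax^{p+1}\|_2\le\|y-A(u^p\otimes\widehat{w})\|_2$ bounded above via RIP and Lemma \ref{lem-IATA-sqrt5} and below via Lemma \ref{lem-Az-sk}, the same two-level recursion fed into Lemma \ref{lem-two-level-geometric}, and the same case split on the sign of $1+\beta-\alpha$ to verify $b+\eta\beta<1$. Your explicit remark that the argument needs the initial iterate $x^1$ to be $k$-sparse is a point the paper leaves implicit, but otherwise the two proofs coincide.
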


\begin{proof}
From \eqref{algorithm-OTP-1}, we have
 \begin{equation}\label{th-OTP-pf-1}
u^p-x_S=(1-\alpha+\beta)(x^p-x_S)+\alpha (I-A^ TA)(x^p-x_S)-\beta (x^{p-1}-x_S)
+\alpha A^T \nu',
\end{equation}
where $S=\mathcal{L}_k(x)$ and $\nu'=\nu+Ax_{\overline{S}}$. Let  $\widehat{w}\in \mathcal{W}^k$ be a $k$-sparse binary vector such that $ \textrm{supp} (x_S) \subseteq  \textrm{supp} (\widehat{w})$. Then $x_S=x_S\circ \widehat{w}$. Since $(x_S-u^p )\circ \widehat{w}$ is a $k$-sparse vector and $y=Ax_S+\nu'$, we have
\begin{align}\label{th-OTP-pf-2}
{\|y-A(u^p\circ\widehat{w})\|}_2=&{\|A(x_S-u^p\circ\widehat{w})+\nu'\|}_2\nonumber\\
\leq &{\|A[(x_S-u^p)\circ\widehat{w}]\|_2+\|\nu'\|}_2 \nonumber \\
\leq & \sqrt{1+\delta_k} {\|(x_S-u^p)\circ\widehat{w}\|_2+\| \nu'\|}_2,
\end{align}
where the last inequality is obtained by using \eqref{def-RIC-1}. From \eqref{th-OTP-pf-1}, one has
\begin{align}\label{th-OTP-pf-3}
 & \|(x_S      -u^p)\circ\widehat{w}\|_2  \nonumber \\
 & \leq     |1-\alpha+\beta|\cdot\|(x^p-x_S)\circ\widehat{w}\|_2
+\alpha \| [(I-A^ TA)(x^p-x_S)]\circ\widehat{w}\|_2\nonumber \\
& ~~~ +\beta\| (x^{p-1}-x_S)\circ\widehat{w}\|_2+\alpha\| ( A^T \nu')\circ\widehat{w}\|_2.
\end{align}
Since $x_S,x^p,\widehat{w}$ are  $k$-sparse vectors and $ \textrm{supp} (x_S) \subseteq  \textrm{supp} (\widehat{w})$, by using Lemmas \ref{lem-IATA-sqrt5} and \ref{lem-basic-ineq} (ii), we obtain
\begin{equation}\label{th-OTP-pf-4}
\| [(I-A^ TA)(x^p-x_S)]\circ\widehat{w}\|_2\leq \sqrt{5} \delta_{k+s(k)}\|x^p-x_S\|_2
\end{equation}
and
\begin{equation}\label{th-OTP-pf-5}
\| ( A^T \nu')\circ\widehat{w}\|_2= \left\| ( A^T \nu')_{ \textrm{supp} (\widehat{w})}\right\|_2\leq \sqrt{1+\delta_k}\| \nu'\|_2.
\end{equation}
Substituting \eqref{th-OTP-pf-4} and  \eqref{th-OTP-pf-5} into \eqref{th-OTP-pf-3}  yields
\begin{align*}
 \|(x_S-u^p)\circ\widehat{w}\|_2
  & \leq  |1-\alpha+\beta|\cdot\|x^p-x_S\|_2
+\alpha\sqrt{5} \delta_{k+s(k)}\|x^p-x_S\|_2  \nonumber \\
 & ~~~ +\beta\| x^{p-1}-x_S\|_2+ \alpha\sqrt{1+\delta_k}\| \nu'\|_2\nonumber \\
& =    (|1+\beta-\alpha|+\sqrt{5}\alpha \delta_{k+s(k)})\|x^p-x_S\|_2+
\beta\| x^{p-1}-x_S\|_2 \nonumber \\
 & ~~~ + \alpha\sqrt{1+\delta_k}\| \nu'\|_2.
\end{align*}
It follows from \eqref{th-OTP-pf-2} that
\begin{align}\label{th-OTP-pf-7}
{\|y-A(u^p\circ\widehat{w})\|}_2
\leq& \sqrt{1+\delta_k} \left(|1+\beta-\alpha|+\sqrt{5}\alpha \delta_{k+s(k)}\right)\|x^p-x_S\|_2\nonumber\\
&+\beta\sqrt{1+\delta_k}\| x^{p-1}-x_S\|_2+ [1+{(1+\delta_k)}\alpha]\| \nu'\|_2.
\end{align}
Since $x^{p+1}=u^p\circ{w}^*$ in HBOT or $x^{p+1}$ is the optimal solution of \eqref{algorithm-OTP-3} in HBOTP,   the sequence $\{x^{p}\}$ generated by HBOT or HBOTP satisfies
\begin{equation}\label{th-OTP-pf-8}
{\|y-Ax^{p+1}\|}_2\leq{\|y-A(u^p\circ {w^*})\|}_2\leq{\|y-A(u^p\circ {w})\|}_2
\end{equation}
 for all $ w\in \mathcal{W}^k,$ where the second inequality follows from  \eqref{algorithm-OTP-2}.
For $\widehat{w}\in \mathcal{W}^k$, it follows  from \eqref{th-OTP-pf-8} that
\begin{equation}\label{th-OTP-pf-9}
{\|y-Ax^{p+1}\|}_2\leq{\|y-A(u^p\circ \widehat{w})\|}_2.
\end{equation}
As  $x_S-x^{p+1}$ is a $(2k)$-sparse vector, by using  Lemma \ref{lem-Az-sk}, one has
\begin{align}\label{th-OTP-pf-10}
{\|y-Ax^{p+1}\|}_2=&{\|A(x_S-x^{p+1})+\nu'\|}_2\nonumber\\
\geq &{\|A(x_S-x^{p+1})\|_2-\|\nu'\|}_2\nonumber\\
\geq&\sqrt{1-2\delta_k-\delta_{k+s(k)}}{\|x_S-x^{p+1}\|_2-\|\nu'\|}_2.
\end{align}
Combining \eqref{th-OTP-pf-7}, \eqref{th-OTP-pf-9} and \eqref{th-OTP-pf-10} yields
\begin{align}\label{th-OTP-pf-11}
 \| x^{p+1}-x_S\|_2
& \leq  \eta (|1+\beta-\alpha|+\sqrt{5}\alpha \delta_{k+s(k)})\|x^p-x_S\|_2
+\eta\beta\| x^{p-1}-x_S\|_2   \nonumber\\
 & ~~~   +\frac{2+{(1+\delta_k)}\alpha}{\sqrt{1-2\delta_k-\delta_{k+s(k)}}} \| \nu'\|_2\nonumber\\
& =  b\|x^p-x_S\|_2+
\eta\beta\| x^{p-1}-x_S\|_2+(1-\theta)C_2\| \nu'\|_2,
\end{align}
where $\eta,b,\theta,C_2$  are given exactly as in Theorem \ref{theorem-OTP}. Since $\delta_k\leq\delta_{k+s(k)}<\gamma^*$, we have
\begin{equation*}\label{th-OTP-pf-12}
\eta\sqrt{5} \delta_{k+s(k)}=\sqrt{5} \delta_{k+s(k)}\sqrt{\frac{1+\delta_k}{1-2\delta_k-\delta_{k+s(k)}}}<\sqrt{5} \gamma^*\sqrt{\frac{1+\gamma^*}{1-3\gamma^*}}=1,
\end{equation*}
where the last equality follows from the fact that $\gamma^*$ is the  root of $5\gamma^3+5\gamma^2+3\gamma=1$ in $(0,1)$.
It implies that $0<\frac{1+1/\eta}{1+\sqrt{5} \delta_{k+s(k)}}-1$, which shows that the range of  $\beta$  in \eqref{th-OTP-0} is well defined. Furthermore,  the first inequality  in \eqref{th-OTP-0}  implies that
\[
\frac{1+2\beta-1/\eta}{1-\sqrt{5} \delta_{k+s(k)}}< 1+\beta<\frac{1+1/\eta}{1+\sqrt{5} \delta_{k+s(k)}},\]
which indicates that  the range for $\alpha$  in \eqref{th-OTP-0} is also well defined.
Combining \eqref{th-OTP-3} with \eqref{th-OTP-0}, we deduce that
\begin{align*}\label{th-OTP-pf-13}
b=&\eta\left(|1+\beta-\alpha|+\sqrt{5}\alpha \delta_{k+s(k)}\right)\nonumber\\
=&\left\{
\begin{array}{ll}
\eta\left[1+\beta-\alpha(1-\sqrt{5} \delta_{k+s(k)})\right],\ \ &\textrm{ if } \frac{1+2\beta-1/\eta}{1-\sqrt{5} \delta_{k+s(k)}}<\alpha\leq 1+\beta, \\
\eta\left[-1-\beta+\alpha(1+\sqrt{5} \delta_{k+s(k)})\right],\ \ &\textrm{ if } 1+\beta< \alpha< \frac{1+1/\eta}{1+\sqrt{5} \delta_{k+s(k)}},
\end{array}
\right.\\
<& 1-\eta\beta,\nonumber
\end{align*}
which means that the relation  \eqref{th-OTP-pf-11} obeys the conditions of Lemma \ref{lem-two-level-geometric}. It  follows  from
Lemma \ref{lem-two-level-geometric} that
\eqref{th-OTP-1}  holds with $\theta=\frac{b+\sqrt{b^2+4\eta\beta}}{2}<1$ and  $C_1,C_2$ are given by \eqref{th-OTP-2}. \hfill $\Box$
\end{proof}

The error bound (\ref{th-OTP-1}) indicates that the iterate $ x^p$ generated by the algorithms can approximate $x_S,$ the significant components of the solution to the linear inverse problem. In particular, we immediately obtain the following convergence result for the algorithms.

\begin{corollary}
Let $x \in \mathbb{R}^n$ be a $k$-sparse solution to the system $y = Ax .$   Assume that the RIC, $\delta_{k+s(k)},$ of $A$ and the parameters $\alpha,\beta$ in HBOT and HBOTP satisfy the conditions of  Theorem \ref{theorem-OTP}. Then the sequence $\{ x^p\}$ generated by  HBOT or HBOTP obeys that
$
 {\|x-x^p\|}_2\leq C_1\theta^{p-1},
$
where the constant $C_1$ is defined in Theorem \ref{theorem-OTP}. Thus the sequence $\{ x^p\}$ generated by  HBOT or HBOTP converges to $x.$
\end{corollary}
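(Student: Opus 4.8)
The plan is to derive the corollary directly from Theorem~\ref{theorem-OTP} by specializing to the noiseless, exactly $k$-sparse case. If $x$ is $k$-sparse and $y=Ax$, then in the notation of the theorem we have $\nu=0$ and, since $\|x\|_0\le k$, the index set $S=\mathcal{L}_k(x)$ satisfies $\mathrm{supp}(x)\subseteq S$, so $x_{\overline{S}}=0$ and hence $x_S=x$. Consequently $\nu'=\nu+Ax_{\overline{S}}=0$, and the error bound \eqref{th-OTP-1} collapses to ${\|x-x^p\|}_2\le C_1\theta^{p-1}$, which is exactly the claimed inequality.

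Next I would check that the hypotheses of Theorem~\ref{theorem-OTP} are indeed met under the weaker-looking assumption of the corollary. The corollary states only $\delta_{k+s(k)}<\gamma^*$, but Theorem~\ref{theorem-OTP} also requires $\alpha,\beta$ to lie in the ranges \eqref{th-OTP-0}. The point made in the proof of the theorem is precisely that, once $\delta_{k+s(k)}<\gamma^*$, the quantity $\eta=\sqrt{(1+\delta_k)/(1-2\delta_k-\delta_{k+s(k)})}$ is well defined and $\eta\sqrt{5}\,\delta_{k+s(k)}<1$, so the interval for $\beta$ in \eqref{th-OTP-0} is nonempty and, for each such $\beta$, the interval for $\alpha$ is nonempty as well; thus admissible parameters exist and, implicitly, the corollary is understood to assume the algorithm is run with such $\alpha,\beta$. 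I would state this explicitly: under $\delta_{k+s(k)}<\gamma^*$ and the parameter choice \eqref{th-OTP-0}, Theorem~\ref{theorem-OTP} applies verbatim.

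Finally, to get convergence $x^p\to x$, I would invoke $0\le\theta<1$, which is guaranteed by the theorem (it shows $b<1-\eta\beta$, hence $\theta=(b+\sqrt{b^2+4\eta\beta})/2<1$). Since $C_1={\|x_S-x^1\|}_2+(\theta-b){\|x_S-x^0\|}_2={\|x-x^1\|}_2+(\theta-b){\|x-x^0\|}_2$ is a fixed finite constant determined by the two initial points, $C_1\theta^{p-1}\to 0$ as $p\to\infty$, and therefore ${\|x-x^p\|}_2\to 0$.

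There is essentially no obstacle here: the corollary is a routine specialization and the only thing requiring a sentence of care is making explicit that ``the parameters $\alpha,\beta$ satisfy'' in the corollary's statement should be read as ``$\alpha,\beta$ are chosen in the (nonempty) ranges \eqref{th-OTP-0}'', which is legitimate precisely because $\delta_{k+s(k)}<\gamma^*$ forces those ranges to be nonempty. Everything else is substitution of $\nu=0$ and $x_{\overline{S}}=0$ into \eqref{th-OTP-1}.
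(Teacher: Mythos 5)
Your proof is correct and matches the paper's approach exactly: the paper treats this corollary as an immediate specialization of Theorem \ref{theorem-OTP} to the case $\nu=0$ and $x$ $k$-sparse, whence $x_S=x$, $\nu'=0$, and the bound \eqref{th-OTP-1} reduces to ${\|x-x^p\|}_2\leq C_1\theta^{p-1}$ with $\theta<1$ giving convergence. Your added remark about the parameter ranges in \eqref{th-OTP-0} being nonempty is a reasonable clarification but not a departure from the paper's argument.
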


\section{Analysis of HBROT$\omega$ and  HBROTP$\omega$}\label{ROT-analysis}
In this section, we establish the error bounds for HBROT$\omega$ and  HBROTP$\omega.$ The analysis is far from being trivial. We need a few useful technical results before we actually establish the error bounds.
We first recall a helpful lemma concerning the polytope  $\mathcal{P}^k$, which is the special case of Lemma 4.2 with $\tau=k$ in \cite{ZL21}.

\begin{lemma}\emph{\cite{ZL21}} \label{lemma4.2}
Given an index set $\Lambda \subseteq N$ and a vector $w\in\mathcal{P}^k, $ decompose  $w_\Lambda$ as the sum of
$k$-sparse vectors:
$w_\Lambda = \sum_{j=1}^q w_{\Lambda_j} $,
where $q:=\lceil \frac{|\Lambda|}{k}\rceil$, $\Lambda = \bigcup_{j=1}^q {\Lambda_j} $ and $\Lambda_1:=\mathcal{L}_k(\omega_\Lambda)$, $\Lambda_2:=\mathcal{L}_k(\omega_{\Lambda\setminus\Lambda_1})$  and so on.
Then $$\sum_{j=1}^q\|w_{\Lambda_j} \|_\infty < 2.$$
\end{lemma}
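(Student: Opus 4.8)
Membership $w\in\mathcal P^k$ supplies exactly two quantitative facts: every coordinate of $w$ (hence of $w_\Lambda$) lies in $[0,1]$, and $\|w\|_1=\mathbf e^Tw=k$ because $w\ge 0$. The only structural input is that the blocks are peeled off greedily, so every coordinate carried by $\Lambda_j$ is no larger than every coordinate carried by $\Lambda_{j-1}$. The plan is to combine a block-by-block ``$\ell_\infty$ versus average'' comparison with the global $\ell_1$ budget $k$, and then squeeze out the strict inequality from the slack left by the trailing block.

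\emph{Step 1 (block comparison).} For each $j$ with $2\le j\le q$, since $\max_{i\in\Lambda_j}w_i\le\min_{i\in\Lambda_{j-1}}w_i$ and $\Lambda_{j-1}$ is a full block of exactly $k$ indices (it is not the last block), the minimum of $w$ over $\Lambda_{j-1}$ is at most its average, so
\[
\|w_{\Lambda_j}\|_\infty\;\le\;\frac1k\,\|w_{\Lambda_{j-1}}\|_1 .
\]
This argument never requires $\Lambda_q$ itself to be full, so the case in which $|\Lambda|$ is not a multiple of $k$ is harmless.

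\emph{Step 2 (telescoping bound).} Summing Step 1 over $j=2,\dots,q$ and adding the $j=1$ term,
\[
\sum_{j=1}^{q}\|w_{\Lambda_j}\|_\infty\;\le\;\|w_{\Lambda_1}\|_\infty+\frac1k\sum_{j=1}^{q-1}\|w_{\Lambda_j}\|_1\;\le\;1+\frac1k\,\|w_\Lambda\|_1\;\le\;1+\frac1k\cdot k\;=\;2 ,
\]
where $\|w_{\Lambda_1}\|_\infty\le 1$ uses $0\le w\le\mathbf e$, and $\sum_{j=1}^{q-1}\|w_{\Lambda_j}\|_1\le\|w_\Lambda\|_1\le\|w\|_1=k$ uses disjointness of the blocks together with $w\ge 0$ and $\mathbf e^Tw=k$.

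\emph{Step 3 (strictness).} To upgrade ``$\le 2$'' to the claimed ``$<2$'' I would exploit that the last nonempty block is never counted inside $\sum_{j=1}^{q-1}\|w_{\Lambda_j}\|_1$. If $w_\Lambda=0$ the claim is trivial; otherwise let $r$ be the largest index with $\|w_{\Lambda_r}\|_1>0$, so $\|w_{\Lambda_j}\|_\infty=0$ for $j>r$. If $r=1$, the sum is $\le\|w_{\Lambda_1}\|_\infty\le 1<2$. If $r\ge 2$, running Steps 1--2 truncated at $r$ (legitimate since $r\le q$ forces each $\Lambda_{j-1}$ with $j\le r$ to be full) gives
\[
\sum_{j=1}^{q}\|w_{\Lambda_j}\|_\infty=\sum_{j=1}^{r}\|w_{\Lambda_j}\|_\infty\le 1+\frac1k\sum_{j=1}^{r-1}\|w_{\Lambda_j}\|_1\le 1+\frac1k\bigl(\|w_\Lambda\|_1-\|w_{\Lambda_r}\|_1\bigr)<1+\frac1k\cdot k=2 ,
\]
the final strict inequality coming from $\|w_{\Lambda_r}\|_1>0$ and $\|w_\Lambda\|_1\le k$. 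The only place needing a moment's care is this last step; Steps 1--2 are the standard ``sorted-block'' estimate, and one may alternatively just invoke \cite[Lemma 4.2]{ZL21} with $\tau=k$, which is exactly what the statement records.
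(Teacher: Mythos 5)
Your proof is correct and complete. Note, however, that the paper does not prove this lemma at all: it is imported verbatim by citation as the special case $\tau=k$ of Lemma~4.2 in \cite{ZL21}, so there is no in-paper argument to compare against. Your argument is the standard one for such sorted-block estimates and is sound at every step: the greedy peeling guarantees $\|w_{\Lambda_j}\|_\infty\le\min_{i\in\Lambda_{j-1}}w_i$, each non-terminal block has exactly $k$ entries so the minimum is dominated by the block average $\tfrac1k\|w_{\Lambda_{j-1}}\|_1$, and telescoping against the budget $\|w\|_1=\mathbf e^Tw=k$ together with $\|w_{\Lambda_1}\|_\infty\le 1$ gives the bound $2$. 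Your Step~3 correctly extracts strictness by truncating at the last nonzero block, whose $\ell_1$ mass is then genuinely omitted from the telescoped sum; the degenerate cases ($w_\Lambda=0$, $r=1$, $q=1$) are all handled. The only cosmetic remark is that the lemma as stated writes $\mathcal L_k(\omega_\Lambda)$ where $\omega$ is a typo for $w$ (the symbol $\omega$ is reserved elsewhere in the paper for the number of data-compression steps); you read it correctly.
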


We now give an inequality concerning the norms $\|\cdot\|_2,~ \|\cdot\|_1$ and $\|\cdot\|_\infty.$ This inequality  is a  modification of Lemma 6.14 in \cite{FR13}, but tailored to the need of the later analysis in this paper.

\begin{lemma}\label{lemma-norm-L2-L1}
Let $h\in \mathbb{R}^r\setminus\{0\}$ be a vector with  $r\geq 2$, and let $\zeta_1> \zeta_2$ be two positive numbers  such that $\|h\|_1\leq \zeta_1$ and $\|h\|_{\infty}\leq \zeta_2$. Then
\begin{equation}\label{lem-norm-L2-L1-1}
\|h\|_2\leq\left\{
\begin{array}{cl}
g(r),&~\textrm{ if } ~ r\leq t_0,\\
 \min\{g(t_0),g(t_0+1)\},& ~\textrm{ if } ~  r\geq t_0+1,\\
\end{array}\right.
\end{equation}
where $t_0:=\lfloor\frac{4\zeta_1}{\zeta_2}\rfloor$ and
\begin{equation}\label{lem-norm-L2-L1-pf-0}
g(j):=\frac{1}{\sqrt{j}}\zeta_1+\frac{\sqrt{j}}{4}\zeta_2,~~j\in (0,+\infty),
\end{equation}
 is strictly  decreasing in the interval  $(0,\frac{4\zeta_1}{\zeta_2}]$ and strictly increasing in the interval $[\frac{4\zeta_1}{\zeta_2},+\infty)$.
\end{lemma}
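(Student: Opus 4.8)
The plan is to reduce the whole statement to one elementary norm estimate together with a short analysis of the scalar function $g$. For the estimate, note that coordinatewise $h_i^2\le\|h\|_\infty\,|h_i|$, so
\[
\|h\|_2^2=\sum_i h_i^2\le\|h\|_\infty\sum_i|h_i|=\|h\|_\infty\,\|h\|_1\le\zeta_1\zeta_2 ,
\]
whence $\|h\|_2\le\sqrt{\zeta_1\zeta_2}$. (One also has $\|h\|_2^2\le r\|h\|_\infty^2\le r\zeta_2^2$, i.e. $\|h\|_2\le\sqrt{r}\,\zeta_2$, which is sharper than $g(r)$ when $r$ is small; this refinement is not needed for the conclusion as stated, but it is available if one wants the tightest bound in the first case.)

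Next I would analyze $g$ on $(0,+\infty)$. Differentiating gives $g'(j)=-\frac{\zeta_1}{2}j^{-3/2}+\frac{\zeta_2}{8}j^{-1/2}=\frac{1}{8}j^{-3/2}\bigl(\zeta_2 j-4\zeta_1\bigr)$, which is negative for $j<4\zeta_1/\zeta_2$ and positive for $j>4\zeta_1/\zeta_2$; together with continuity at $4\zeta_1/\zeta_2$ this yields the asserted strict monotonicity on $(0,4\zeta_1/\zeta_2]$ and on $[4\zeta_1/\zeta_2,+\infty)$, so $g$ attains its global minimum at $j^{\star}:=4\zeta_1/\zeta_2$. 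By the AM--GM inequality applied to the two summands of $g$,
\[
g(j)=\frac{\zeta_1}{\sqrt{j}}+\frac{\sqrt{j}}{4}\zeta_2\ \ge\ 2\sqrt{\frac{\zeta_1}{\sqrt{j}}\cdot\frac{\sqrt{j}}{4}\zeta_2}=\sqrt{\zeta_1\zeta_2}\qquad\text{for every }j>0,
\]
with equality exactly at $j=j^{\star}$; in particular $g(j^{\star})=\sqrt{\zeta_1\zeta_2}$. I would also record that $\zeta_1>\zeta_2$ forces $j^{\star}>4$, so $t_0=\lfloor j^{\star}\rfloor\ge4\ge1$ and the quantities $g(t_0),g(t_0+1)$ are well defined.

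The conclusion then follows by combining the two ingredients: since $g(j)\ge\sqrt{\zeta_1\zeta_2}\ge\|h\|_2$ for every $j>0$, we obtain $\|h\|_2\le g(r)$ when $r\le t_0$, and both $\|h\|_2\le g(t_0)$ and $\|h\|_2\le g(t_0+1)$ — hence $\|h\|_2\le\min\{g(t_0),g(t_0+1)\}$ — when $r\ge t_0+1$. (If one prefers a bound sharper than $g(r)$ in the first case, use instead $\|h\|_2\le\min\{\sqrt{r}\,\zeta_2,\sqrt{\zeta_1\zeta_2}\}$ together with the elementary fact that $\sqrt{r}\,\zeta_2\le g(r)$ whenever $r\le 4\zeta_1/(3\zeta_2)$, which holds in particular when $r\le\zeta_1/\zeta_2$.) The only place that calls for any care is the analytic bookkeeping around $g$: checking the sign of $g'$ to obtain the stated strict monotonicity, and tracking the floor $t_0=\lfloor 4\zeta_1/\zeta_2\rfloor$ against the true critical point $j^{\star}$ so that the indices used in each case sit on the correct branch. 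The norm estimate $\|h\|_2\le\sqrt{\zeta_1\zeta_2}$ itself, which is where the hypotheses $\|h\|_1\le\zeta_1$ and $\|h\|_\infty\le\zeta_2$ enter, is the easy ingredient and in fact already dominates every bound appearing on the right-hand side of the lemma's displayed inequality, while the hypotheses $\zeta_1>\zeta_2$ and $r\ge2$ serve only to keep those floor-function indices in the valid range.
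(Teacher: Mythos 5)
Your proof is correct, and it takes a genuinely different and much more elementary route than the paper. You combine the interpolation inequality $\|h\|_2^2\le\|h\|_\infty\|h\|_1\le\zeta_1\zeta_2$ with the AM--GM observation that $g(j)\ge 2\sqrt{(\zeta_1/\sqrt{j})\cdot(\sqrt{j}\,\zeta_2/4)}=\sqrt{\zeta_1\zeta_2}$ for every $j>0$, so that the single uniform bound $\|h\|_2\le\sqrt{\zeta_1\zeta_2}$ already implies both branches of the displayed inequality; your sign analysis of $g'$ for the monotonicity claim is also correct. The paper instead sorts the entries of $h$, partitions them into blocks of size $t\in\{t_0,t_0+1\}$, applies the Foucart--Rauhut inequality $\sqrt{a_1^2+\cdots+a_s^2}\le (a_1+\cdots+a_s)/\sqrt{s}+\sqrt{s}(a_1-a_s)/4$ to each block, and telescopes the boundary terms to land exactly on $g(t)$ (respectively $g(r)$ when $r\le t_0$); that construction is what motivates the particular function $g$ and the case split at $t_0$, which reappear verbatim in the corollary defining $\xi_q$. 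What your argument buys is brevity and, in fact, strength: since $g(j)\ge\sqrt{\zeta_1\zeta_2}$ with equality only at $j=4\zeta_1/\zeta_2$, your bound $\sqrt{\zeta_1\zeta_2}$ is never worse and is strictly sharper than $g(r)$ whenever $r\ne 4\zeta_1/\zeta_2$ (e.g., with $\zeta_1=2,\zeta_2=1$ it gives $\sqrt{2}$ uniformly, versus the paper's $\xi_2=\tfrac{5}{4}\sqrt{2}$), which would slightly improve the downstream constants. What the paper's blockwise route buys is a bound expressed in the block-count variable that matches the decomposition of Lemma \ref{lemma4.2} and the comparison with \cite[Lemma 6.14]{FR13} made in the example; but as a pure upper bound on $\|h\|_2$ it is dominated by yours.
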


\begin{proof}
Without loss of generality, we  assume that $h$ is a nonnegative vector. Sort the components of $h$ into descending order, and denote such ordered components by $z_1\geq z_2\geq \cdots \geq z_r\geq 0$ and $z=(z_1,\ldots,z_r)^T$. Thus, $\|z\|_q=\|h\|_q$ for  $q\geq 1.$  For a given positive integer $s$ and $a_1\geq a_2\geq \cdots \geq a_s\geq 0$, from \cite[Lemma 6.14]{FR13}, one has
\begin{equation}\label{lem-norm-L2-L1-pf-1}
\sqrt{a_1^2+\cdots+a_s^2}\leq \frac{a_1+\cdots+a_s}{\sqrt{s}}+\frac{\sqrt{s}}{4}(a_1-a_s).
\end{equation}

There are only two cases according to the relation between $r$ and $t_0$.

{\bf Case 1}.   $r\leq t_0.$ By using \eqref{lem-norm-L2-L1-pf-0} and \eqref{lem-norm-L2-L1-pf-1}, we have
\begin{equation}\label{lem-norm-L2-L1-pf-2}
\|z\|_2\leq \frac{\|z\|_1}{\sqrt{r}}+\frac{\sqrt{r}}{4}(z_1-z_r)\leq \frac{\|z\|_1}{\sqrt{r}}+\frac{\sqrt{r}}{4}\|z\|_\infty\leq  \frac{1}{\sqrt{r}}\zeta_1+\frac{\sqrt{r}}{4}\zeta_2   =g(r).
\end{equation}

{\bf Case 2}.   $r\geq t_0+1. $ Denote by  $ t:=\arg \min_{j}\{g(j):j=t_0,t_0+1\}$ and let  $r_1,r_2$ be nonnegative integers such that  $r=r_1t+r_2(0\leq r_2<t)$.
Decompose  $z$ as the sum of
$t$-sparse vectors:
$z = \sum_{j=1}^{r_1+1} z_{Q_j} $,
where $Q_j:=\{(j-1)t+1,\ldots,jt\}$ with $ j=1,\ldots,r_1, $ and  $Q_{r_1+1}:=\{r_1t+1,\ldots,r_1t+r_2\}$.

Firstly, we consider the case $r_2>0$. With the aid of  \eqref{lem-norm-L2-L1-pf-1}, we see that
\begin{equation}\label{lem-norm-L2-L1-pf-3}
\|z_{Q_j}\|_2\leq \frac{\|z_{Q_j}\|_1}{\sqrt{t}}+\frac{\sqrt{t}}{4}\left(z_{(j-1)t+1}-z_{jt}\right),\ \ j=1,\ldots,r_1,
\end{equation}
and
\begin{equation}\label{lem-norm-L2-L1-pf-4}
\|z_{Q_{r_1+1}}\|_2\leq \frac{\|z_{Q_{r_1+1}}\|_1}{\sqrt{t}}+\frac{\sqrt{t}}{4}z_{r_1t+1},
\end{equation}
which is ensured under the conditions $a_1=z_{r_1t+1},\ldots, a_{r_2}=z_{r_1t+r_2}$ and $a_{r_2+1}=\ldots =a_{t}=0$. Merging \eqref{lem-norm-L2-L1-pf-3}  with \eqref{lem-norm-L2-L1-pf-4}, one has
\begin{equation*}\label{lem-norm-L2-L1-pf-5}
\|z\|_2=\left\| \sum_{j=1}^{r_1+1} z_{Q_j} \right\|_2\leq\sum_{j=1}^{r_1+1}\|z_{Q_j}\|_2\leq \frac{1}{\sqrt{t}}\sum_{j=1}^{r_1+1}\|z_{Q_j}\|_1+\frac{\sqrt{t}}{4}\mu
\end{equation*}
with
\begin{equation*}\label{lem-norm-L2-L1-pf-6}
\mu:=\sum_{j=1}^{r_1}\left(z_{(j-1)t+1}-z_{jt}\right)+z_{r_1t+1}
=z_1-\sum_{j=1}^{r_1}\left(z_{jt}-z_{jt+1}\right)\leq z_1,
\end{equation*}
where the inequality is resulted from $z_1\geq z_2\geq \cdots \geq z_r$. It follows that
\begin{equation}\label{lem-norm-L2-L1-pf-7}
\|z\|_2\leq \frac{1}{\sqrt{t}}\|z\|_1+\frac{\sqrt{t}}{4}z_1\leq \frac{1}{\sqrt{t}}\zeta_1+\frac{\sqrt{t}}{4}\zeta_2=g(t),
\end{equation}
where the second inequality is ensured by $\|z\|_1=\|h\|_1\leq \zeta_1$ and $z_1=\|h\|_{\infty}\leq \zeta_2$.

Secondly, we now consider the case  $r_2=0$, which means $Q_{r_1+1}=\emptyset$ and $z = \sum_{j=1}^{r_1} z_{Q_j}$. Hence, by using \eqref{lem-norm-L2-L1-pf-1}, we obtain
\begin{equation}\label{lem-norm-L2-L1-pf-8-0}
\|z\|_2 \leq\sum_{j=1}^{r_1}\|z_{Q_j}\|_2\leq \frac{1}{\sqrt{t}}\sum_{j=1}^{r_1}\|z_{Q_j}\|_1+\frac{\sqrt{t}}{4}\sum_{j=1}^{r_1}\left(z_{(j-1)t+1}-z_{jt}\right).
\end{equation}
For $z_1\geq z_2\geq \cdots \geq z_r\geq 0$, we have
 \begin{equation}\label{lem-norm-L2-L1-pf-8-1}
\sum_{j=1}^{r_1}\left(z_{(j-1)t+1}-z_{jt}\right)=z_1-\sum_{j=1}^{r_1-1}\left(z_{jt}-z_{jt+1}\right)-z_{r_1}\leq z_1=\|z\|_\infty.
\end{equation}
Merging \eqref{lem-norm-L2-L1-pf-8-0} with \eqref{lem-norm-L2-L1-pf-8-1} leads to
\begin{equation}\label{lem-norm-L2-L1-pf-8}
\|z\|_2 \leq \frac{1}{\sqrt{t}}\|z\|_1+\frac{\sqrt{t}}{4}\|z\|_\infty\leq \frac{1}{\sqrt{t}}\zeta_1+\frac{\sqrt{t}}{4}\zeta_2=g(t).
\end{equation}
Combining \eqref{lem-norm-L2-L1-pf-2},   \eqref{lem-norm-L2-L1-pf-7}, \eqref{lem-norm-L2-L1-pf-8} with  $\|z\|_q=\|h\|_q (q\geq 1)$, we  obtain the relation \eqref{lem-norm-L2-L1-1} directly. \hfill $\Box$
\end{proof}

Now, we use an example to show that the upper bound of $\|h\|_2$ in Lemma \ref{lemma-norm-L2-L1}  is tighter than that of Lemma 6.14 in \cite{FR13} in some situations.

\begin{example}
Let $h=(1,\epsilon_1,\ldots,\epsilon_{14},\epsilon_0)^T\in \mathbb{R}^{16}$, where $\epsilon_j\geq \epsilon_0 ~ (j=1,\ldots,14), $ $ \sum_{j=1}^{14}\epsilon_j=1-\epsilon_0$ and $ \epsilon_0\in(0,1/15]$. Hence, $\|h\|_1=2$ and $ \|h\|_\infty=1$. Set $\zeta_1=2$ and $\zeta_2=1.$ Then $t_0=\frac{4\zeta_1}{\zeta_2}=8$. The upper bound of $\|h\|_2$ can be given by  $\|h\|_2\leq 1.5-\epsilon_0$ in \eqref{lem-norm-L2-L1-pf-1} and $\|h\|_2\leq g(8)=\sqrt{2}$ in \eqref{lem-norm-L2-L1-1}, respectively. Since $1.5-\epsilon_0>\sqrt{2}$ for $ \epsilon_0\in(0,1/15]$,  we see that the  upper bound of $\|h\|_2$ given by \eqref{lem-norm-L2-L1-1} is tighter than that of  \eqref{lem-norm-L2-L1-pf-1} if $r>t_0+1$ and $\epsilon_0=\min_{1\leq i\leq  r} |h_i|$ is small enough.
\end{example}

Taking $h=( \|w_{\Lambda_1} \|_\infty,\ldots, \|w_{\Lambda_q} \|_\infty)^T$ with $q=\lceil \frac{|\Lambda|}{k}\rceil$, by using Lemma \ref{lemma4.2}, we have $\|h\|_1<2$ and $ \|h\|_\infty= \|w_{\Lambda_1}\|_\infty\leq 1$. Hence, by setting $\zeta_1=2$ and $\zeta_2=1$, we get $t_0=8$ in Lemma \ref{lemma-norm-L2-L1}. This results in the following corollary.

\begin{corollary}\label{corollary-L2-L1}
Under the conditions of Lemma \ref{lemma4.2}, one has
$
\left(\sum_{j=1}^q\|  w_{\Lambda_j} \|_\infty^2\right)^{1/2}
\leq\xi_q,
$
where
\begin{align}\label{corollary-L2-L1-2}
\xi_q=\left\{
\begin{array}{cl}
 1,&~\textrm{ if } ~ q=1,\\
\frac{2}{\sqrt{q}}+\frac{\sqrt{q}}{4},&~\textrm{ if } ~ 2\leq q< 8,\\
\sqrt{2},& ~\textrm{ if }~ q\geq 8,
\end{array}\right.
\end{align}
which is strictly  decreasing in the interval $[2,8]$  and $\underset{q\geq 1}{\max} ~\xi_q= \xi_2=\frac{5}{4}\sqrt{2}$.
\end{corollary}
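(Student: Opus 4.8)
The plan is to derive the corollary as an immediate specialization of Lemma~\ref{lemma-norm-L2-L1}. First I would introduce the vector $h := (\|w_{\Lambda_1}\|_\infty, \dots, \|w_{\Lambda_q}\|_\infty)^T \in \mathbb{R}^q$; if $w_\Lambda = 0$ the asserted inequality is trivial, so assume $h \neq 0$. Two facts are then needed. By Lemma~\ref{lemma4.2}, $\|h\|_1 = \sum_{j=1}^q \|w_{\Lambda_j}\|_\infty < 2$. Moreover, since $\Lambda_1 = \mathcal{L}_k(w_\Lambda)$ collects the $k$ entries of $w_\Lambda$ of largest absolute value, $\|w_{\Lambda_1}\|_\infty \geq \|w_{\Lambda_j}\|_\infty$ for every $j$, so $\|h\|_\infty = \|w_{\Lambda_1}\|_\infty \leq \|w\|_\infty \leq 1$, the last step because $w \in \mathcal{P}^k$. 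Hence the hypotheses of Lemma~\ref{lemma-norm-L2-L1} hold with $\zeta_1 = 2$ and $\zeta_2 = 1$ (note $\zeta_1 > \zeta_2$), which gives $t_0 = \lfloor 4\zeta_1/\zeta_2 \rfloor = 8$ and $g(j) = \frac{2}{\sqrt j} + \frac{\sqrt j}{4}$.

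Next I would split into the cases appearing in the statement. For $q = 1$, Lemma~\ref{lemma-norm-L2-L1} does not directly apply (it requires dimension at least $2$), but then $\|h\|_2 = \|w_{\Lambda_1}\|_\infty \leq 1 = \xi_1$. For $2 \leq q \leq 8$, the first branch of \eqref{lem-norm-L2-L1-1} yields $\|h\|_2 \leq g(q) = \frac{2}{\sqrt q} + \frac{\sqrt q}{4}$, which is $\xi_q$ when $q < 8$ and equals $g(8) = \frac{1}{\sqrt2} + \frac{\sqrt2}{2} = \sqrt2$ when $q = 8$, consistently with both pieces of \eqref{corollary-L2-L1-2}. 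For $q \geq 9$, the second branch of \eqref{lem-norm-L2-L1-1} gives $\|h\|_2 \leq \min\{g(8), g(9)\}$; since $g$ is strictly increasing on $[8, +\infty)$ by Lemma~\ref{lemma-norm-L2-L1}, $g(9) > g(8) = \sqrt2$, so $\|h\|_2 \leq \sqrt2 = \xi_q$. Since $\big(\sum_{j=1}^q \|w_{\Lambda_j}\|_\infty^2\big)^{1/2} = \|h\|_2$, this proves the stated bound in every case.

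For the two remaining assertions I would invoke the monotonicity of $g$ from Lemma~\ref{lemma-norm-L2-L1}: on $\{2, \dots, 8\}$ one has $\xi_q = g(q)$ with $g$ strictly decreasing on $(0, 8]$, so $q \mapsto \xi_q$ is strictly decreasing on $[2,8]$, and in particular $\xi_q \leq \xi_2 = g(2) = \sqrt2 + \frac{\sqrt2}{4} = \frac{5}{4}\sqrt2$ there. Since $\xi_1 = 1 < \frac54\sqrt2$ and $\xi_q = \sqrt2 < \frac54\sqrt2$ for $q \geq 8$, we conclude $\max_{q \geq 1} \xi_q = \xi_2 = \frac54\sqrt2$. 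There is no real obstacle here — all the analytic work sits inside Lemma~\ref{lemma-norm-L2-L1} — the only points needing attention are treating $q = 1$ by hand (the lemma assumes $r \geq 2$) and checking that the $\min\{g(t_0), g(t_0+1)\}$ branch collapses to $g(8) = \sqrt2$, which is immediate from the monotonicity of $g$.
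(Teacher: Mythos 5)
Your proposal is correct and follows exactly the route the paper takes: the paper's justification is precisely to set $h=(\|w_{\Lambda_1}\|_\infty,\ldots,\|w_{\Lambda_q}\|_\infty)^T$, use Lemma \ref{lemma4.2} to get $\|h\|_1<2$ and $\|h\|_\infty\leq 1$, and then apply Lemma \ref{lemma-norm-L2-L1} with $\zeta_1=2$, $\zeta_2=1$, $t_0=8$. Your write-up is in fact slightly more careful than the paper's one-sentence derivation, since you explicitly treat $q=1$ (where the lemma's hypothesis $r\geq 2$ fails) and verify that the $\min\{g(8),g(9)\}$ branch collapses to $g(8)=\sqrt{2}$.
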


Using Lemmas \ref{lemma4.2} and \ref{lemma-norm-L2-L1}, we can establish the next lemma.

\begin{lemma}\label{lemma4.3-right}
Let $x \in \mathbb{R}^n$ be a vector satisfying $y= Ax+\nu$ where  $\nu$ is a noise vector. Let $S=\mathcal{L}_k (x)$ and let $V \subseteq N $ be any given index set such that  $S \subseteq V$.
At the iterate $x^p$, the vectors $w^{(j)}, ~ j=1,\cdots,\omega,$ are generated by HBROT$\omega$ or  HBROTP$\omega$. For every $i\in\{1,\ldots,\omega\}$, we have
\begin{align}\label{lem4.3-right-0}
\Theta^i:= &\left\|A\left[(u^p-x_S)\circ w_H^{(i)}\right]_{\overline{V}}\right\|_2\nonumber\\
\leq &\sqrt{1+\delta_k} \Big[\big(\xi_q|1-\alpha+\beta|+2\alpha\delta_{3k}\big)\|x^p-x_S\|_2
+\beta \xi_q\|x^{p-1}-x_S\|_2  \nonumber\\
& +2\alpha\sqrt{1+\delta_k}\| \nu'\|_2\Big],
\end{align}
where  $w_H^{(i)}$ is the Hadamard product of vectors $w^{(j)}(j=1,\cdots,i)$, i.e.,
\begin{align}\label{Had-prod-w}
w_H^{(i)}:=w^{(1)}\circ w^{(2)}\circ \cdots \circ w^{(i)},\ \ i=1,\ldots, \omega,
\end{align}
 and $\xi_q$ is given by \eqref{corollary-L2-L1-2} with  $q=\lceil \frac{n-|V|}{k}\rceil$.
\end{lemma}

\begin{proof}
Taking $w=w^{(1)}$ and $\Lambda=\overline{V}$ in Lemma \ref{lemma4.2} and Corollary \ref{corollary-L2-L1}, one has
\begin{equation}\label{lem4.3-right-pf-2}
\sum_{j=1}^q\|  (w^{(1)})_{\Lambda_j} \|_\infty< 2, ~ \sqrt{\sum_{j=1}^q\|  (w^{(1)})_{\Lambda_j} \|_\infty^2}
\leq\xi_q,
\end{equation}
where   $q=\lceil \frac{n-|V|}{k}\rceil$ and the definition of   ${\Lambda_j}, ~j=1,\ldots,q, $  can be found in Lemma  \ref{lemma4.2}.
Next, we  derive the relation \eqref{lem4.3-right-0} for given $i\in\{1,\ldots,\omega\}$. Define the $k$-sparse vectors $z^{(l)}:=[(u^p-x_S)\circ w_H^{(i)}]_{\Lambda_l}, ~l=1,\ldots,q,$  where $u^p$ and $w_H^{(i)}$ are given by \eqref{algorithm-OTP-1} and \eqref{Had-prod-w}, respectively.  Since $w^{(j)}\in\mathcal{P}^k(j=1,\ldots,\omega)$, we have
\begin{equation}\label{lem4.3-right-pf-3}
\|z^{(l)}\|_2
\leq \|(w_H^{(i)})_{\Lambda_l}\|_\infty\cdot\|(u^p-x_S)_{\Lambda_l}\|_2
\leq \|( w^{(1)})_{\Lambda_l}\|_\infty\cdot\|(u^p-x_S)_{\Lambda_l}\|_2,
\end{equation}
where the second inequality follows from \eqref{Had-prod-w} and $0\leq w^{(j)} \leq \bf e$ for $j=1,\ldots,\omega$.
Since $z^{(l)}, ~l=1,\ldots,q, $ are  $k$-sparse vectors, from the definition of $\Theta^i$ in  \eqref{lem4.3-right-0} and  \eqref{lem4.3-right-pf-3}, we obtain
\begin{align}\label{lem4.3-right-pf-4}
\Theta^i &  =\left\|A\sum_{l=1}^{q}z^{(l)}\right\|_2\leq\sum_{l=1}^{q}\|Az^{(l)}\|_2
 \leq\sqrt{1+\delta_k}\sum_{l=1}^{q}\|z^{(l)}\|_2 \nonumber \\
 & \leq\sqrt{1+\delta_k}\sum_{l=1}^{q}\|( w^{(1)})_{\Lambda_l}\|_\infty\cdot\|(u^p-x_S)_{\Lambda_l}\|_2,
\end{align}
where the second inequality is given  by \eqref{def-RIC-1}. Since $|\Lambda_l|\leq k$ and $| \textrm{supp} (x^p-x_S)\cup \Lambda_l|\leq 3k$ for $l=1,\dots,q,$ by using \eqref{th-OTP-pf-1}  and Lemma \ref{lem-basic-ineq}, we have
\begin{align}\label{lem4.3-right-pf-5}
\|(u^p-x_S)_{\Lambda_l}\|_2\leq& |1-\alpha+\beta|\cdot\|(x^p-x_S)_{\Lambda_l}\|_2+\alpha\|[(I-A^ TA)(x^p-x_S)]_{\Lambda_l}\|_2\nonumber\\
&+\beta \|(x^{p-1}-x_S)_{\Lambda_l}\|_2+\alpha\|(A^T \nu')_{\Lambda_l}\|_2\nonumber\\
\leq & |1-\alpha+\beta|\cdot\|(x^p-x_S)_{\Lambda_l}\|_2+\alpha\delta_{3k}\|x^p-x_S\|_2\nonumber\\
&+\beta \|(x^{p-1}-x_S)_{\Lambda_l}\|_2+\alpha\sqrt{1+\delta_k}\| \nu'\|_2.
\end{align}
Substituting \eqref{lem4.3-right-pf-5} into  \eqref{lem4.3-right-pf-4} yields
\begin{align*}
\frac{\Theta^i}{\sqrt{1+\delta_k}}
\leq & |1-\alpha+\beta|\cdot\sum_{l=1}^{q}\|( w^{(1)})_{\Lambda_l}\|_\infty\cdot\|(x^p-x_S)_{\Lambda_l}\|_2 \\
& +\alpha\delta_{3k}\sum_{l=1}^{q}\|( w^{(1)})_{\Lambda_l}\|_\infty\cdot\|x^p-x_S\|_2\\
& +\beta \sum_{l=1}^{q}\|( w^{(1)})_{\Lambda_l}\|_\infty \cdot\|(x^{p-1}-x_S)_{\Lambda_l}\|_2 \\
& +\alpha\sqrt{1+\delta_k}\sum_{l=1}^{q}\|( w^{(1)})_{\Lambda_l}\|_\infty\cdot\| \nu'\|_2.
\end{align*}
It follows from Cauchy-Schwarz inequality and \eqref{lem4.3-right-pf-2} that
\begin{align*}
& \frac{\Theta^i}{\sqrt{1+\delta_k}} \\
 & \leq   |1-\alpha+\beta|\sqrt{\sum_{l=1}^q\|  (w^{(1)})_{\Lambda_l} \|_\infty^2}\sqrt{\sum_{l=1}^q\|(x^p-x_S)_{\Lambda_l}\|_2 ^2}+2\alpha\delta_{3k}\|x^p-x_S\|_2\\
 & ~~~ +\beta \sqrt{\sum_{l=1}^q\|  (w^{(1)})_{\Lambda_l} \|_\infty^2}\sqrt{\sum_{l=1}^q\|(x^{p-1}-x_S)_{\Lambda_l}\|_2 ^2}+2\alpha\sqrt{1+\delta_k} \| \nu'\|_2\\
& \leq  |1-\alpha+\beta|\xi_q \|x^p-x_S\|_2+2\alpha\delta_{3k}\|x^p-x_S\|_2
+\beta \xi_q\|x^{p-1}-x_S\|_2\\
  & ~~~ +2\alpha\sqrt{1+\delta_k}\| \nu'\|_2,
\end{align*}
where the last  inequality follows from the   relation
$
\sum_{j=1}^q\|z_{\Lambda_l}\|_2 ^2=\|z_{\overline{V}}\|_2 ^2\leq \|z\|_2 ^2 $  for any $ z\in \mathbb{R}^n
$
due to $\overline{V} = \bigcup_{j=1}^q {\Lambda_j} $ and $\Lambda_j\bigcap \Lambda_l=\emptyset$ for $ j\neq l$.
Thus \eqref{lem4.3-right-0} holds.  \hfill $\Box$
\end{proof}

We now estimate the term $\| y-A(u^p\circ w_H^{(\omega)})\|_2$ by using Lemma \ref{lemma4.3-right}.

\begin{lemma}\label{lemma4.3}
Let $x \in \mathbb{R}^n$ be a vector satisfying $y = Ax+\nu$ where  $\nu$ is a noise vector.
At the iterate $x^p$, the vectors $u^p$ and $w^{(j)}(j=1,\cdots,\omega)$ are generated by HBROT$\omega$ or  HBROTP$\omega$. Then
\begin{align}\label{lem4.3-right-2}
& \| y-A(u^p\circ w_H^{(\omega)})\|_2 \nonumber \\
 & \leq c_{1,q}\sqrt{1+\delta_k}\|x^p-x_S\|_2
+\sqrt{1+\delta_k}\beta\big[\xi_q(\omega-1)+1\big] \|x^{p-1}-x_S\|_2 \nonumber\\
 & ~~~ +\big[\alpha(2\omega-1)(1+\delta_k)+1\big] \| \nu'\|_2,
\end{align}
where $w_H^{(\omega)}$ is given by \eqref{Had-prod-w}, $ S:=\mathcal{L}_k (x)$, $q=\lceil  \frac{n-k}{k} \rceil $,
$\xi_q$ is given by \eqref{corollary-L2-L1-2} and $c_{1,q}$ is given as
\begin{equation}\label{lem4.3-right-3}
\begin{array}{rl}
c_{1,q}:=\big(\xi_q(\omega-1)+1\big)|1-\alpha+\beta|+ \alpha\big(2(\omega-1)\delta_{3k}+\delta_{2k}\big).
\end{array}
\end{equation}
\end{lemma}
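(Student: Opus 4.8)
The plan is to reduce the target quantity, $\|y - A[u^p\otimes(\bigotimes_{j=1}^\omega w^{(j)})]\|_2$, to a telescoping sum of norms of the form $\|A[(u^p - x_S)\otimes(\bigotimes_{j=1}^i w^{(j)})]\|_2$ that can be controlled, on the "bad" coordinates, by Lemma~\ref{lemma4.3-right}, and on the "good" coordinates (those inside $S$) by a direct RIP estimate. First I would write $y = Ax_S + \nu'$ with $\nu' = \nu + Ax_{\overline S}$ (exactly the splitting used in Theorem~\ref{theorem-OTP}), so that
\[
y - A[u^p\otimes(\textstyle\bigotimes_{j=1}^\omega w^{(j)})] = A\big[x_S - u^p\otimes(\textstyle\bigotimes_{j=1}^\omega w^{(j)})\big] + \nu'.
\]
The natural device is to insert the intermediate vectors $v^{(i)} := u^p\otimes(\bigotimes_{j=1}^i w^{(j)})$ and telescope: $x_S - v^{(\omega)} = (x_S - v^{(0)}) + \sum_{i=1}^{\omega}(v^{(i-1)} - v^{(i)})$ where $v^{(0)} = u^p$, and observe $v^{(i-1)} - v^{(i)} = v^{(i-1)}\otimes(\mathbf e - w^{(i)})$. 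Since $\mathbf e - w^{(i)} \in [0,\mathbf e]^n$ and $\mathbf e^T(\mathbf e - w^{(i)}) = n-k$, each difference term is a vector whose weight profile is handled by the $\mathcal P^k$-decomposition machinery.

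The second step is the coordinate split. For a given $w\in\mathcal P^k$ applied to $u^p - x_S$, decompose the index set according to whether coordinates lie in $S$ or in $\overline S$. On $\overline S$ one applies Lemma~\ref{lemma4.3-right} with $V = S$, which yields the factor $\xi_q$ with $q = \lceil (n-k)/k\rceil$ and the $2\alpha\delta_{3k}$ term; on $S$ itself, $(u^p - x_S)_S$ is $k$-sparse and one estimates $\|A(\cdot)\|_2 \le \sqrt{1+\delta_k}\|(u^p-x_S)_S\|_2$, then plugs in \eqref{th-OTP-pf-1} using Lemma~\ref{lem-basic-ineq}(i) with $t = 2k$ (this is where $\delta_{2k}$ enters, as opposed to $\delta_{3k}$) and Lemma~\ref{lem-basic-ineq}(ii). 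Carrying this out for the "leading" term $x_S - u^p$ (which contributes once, giving the $|1-\alpha+\beta|$ with coefficient $1$, plus $\alpha\delta_{2k}$ from the $S$-part) and for each of the $\omega - 1$ genuinely new difference terms $v^{(i-1)}\otimes(\mathbf e - w^{(i)})$ with $i = 2,\dots,\omega$ (each contributing $\xi_q|1-\alpha+\beta|$ on $\overline S$-type pieces plus $2\alpha\delta_{3k}$), and noting the $i=1$ difference $u^p\otimes(\mathbf e - w^{(1)})$ is absorbed together with $x_S - u^p$ into the single Lemma~\ref{lemma4.3-right} application at $i=1$ — this bookkeeping is what produces the coefficient $\xi_q(\omega-1) + 1$ multiplying $|1-\alpha+\beta|$ and $\beta\|x^{p-1}-x_S\|_2$, the coefficient $2(\omega-1)\delta_{3k} + \delta_{2k}$ multiplying $\alpha$ inside $c_{1,q}$, and the $\alpha(2\omega-1)(1+\delta_k) + 1$ multiplying $\|\nu'\|_2$ (each of the $\omega$ telescoped terms contributes at most $2\alpha(1+\delta_k)$ to the noise, one of them only $\alpha(1+\delta_k)$, plus the standalone $\|\nu'\|_2$ from the triangle inequality on $\nu'$).

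Finally I would collect the $\|x^p - x_S\|_2$, $\|x^{p-1}-x_S\|_2$ and $\|\nu'\|_2$ coefficients, factor out $\sqrt{1+\delta_k}$ from the first two groups, and match the result against \eqref{lem4.3-right-2}–\eqref{lem4.3-right-3}. The main obstacle I anticipate is the careful accounting of \emph{how many times} each error term gets counted as the telescoping proceeds and which RIC order ($\delta_{2k}$ versus $\delta_{3k}$) applies to each piece: the $S$-supported part of $u^p - x_S$ interacts with the $k$-sparse $x^p - x_S$ through $I - A^TA$ on a set of size at most $2k$, whereas the $\overline S$-supported pieces live on $\Lambda_l$ with $|\mathrm{supp}(x^p-x_S)\cup\Lambda_l|\le 3k$; conflating these would give the wrong constant. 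A secondary subtlety is verifying that replacing $\bigotimes_{j=1}^i w^{(j)}$ by $w^{(1)}$ in the $\ell_\infty$ bound (as in \eqref{lem4.3-right-pf-3}) remains valid at each stage of the telescope, which it does because each $w^{(j)}\in[0,\mathbf e]^n$ so the Hadamard product only shrinks coordinates — but one must state this explicitly for every intermediate $v^{(i)}$.
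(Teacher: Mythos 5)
There is a genuine gap: your telescoping never uses the fact that each $w^{(i)}$ is the \emph{minimizer} of the convex program \eqref{algorithm-ROTP-2}, and without that fact the conclusion is simply false. The algebraic identity $x_S-v^{(\omega)}=(x_S-u^p)+\sum_{i=1}^{\omega}v^{(i-1)}\otimes(\mathbf e-w^{(i)})$, with $v^{(i)}:=u^p\otimes(\bigotimes_{j=1}^i w^{(j)})$, holds for \emph{arbitrary} $w^{(j)}\in\mathcal P^k$; but if $w^{(1)}$ is chosen adversarially (say, concentrated off $\mathrm{supp}(x_S)$ on coordinates where $u^p$ is small, so that $u^p\otimes w^{(1)}\approx 0$), the left-hand side of \eqref{lem4.3-right-2} is about $\|y\|_2\approx\|Ax_S\|_2$, which is not dominated by the right-hand side when $\|x^p-x_S\|_2$ is small (for $\omega=1$, $\alpha=1$, $\beta=0$ one has $c_{1,q}=\delta_{2k}$). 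The same defect appears term by term: $v^{(i-1)}\otimes(\mathbf e-w^{(i)})$ carries the full signal $x_S$ on its support, since $\mathbf e-w^{(i)}$ need not vanish on $S$ and $v^{(i-1)}_S\approx x_S$; it is not of the form $(u^p-x_S)\otimes(\cdots)$ restricted to the complement of a fixed index set, so Lemma~\ref{lemma4.3-right} does not apply to it, and $\|A[v^{(i-1)}\otimes(\mathbf e-w^{(i)})]\|_2$ cannot be controlled by $\|x^p-x_S\|_2$, $\|x^{p-1}-x_S\|_2$ and $\|\nu'\|_2$ alone.

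The paper's proof avoids this by invoking Lemma 4.3 of \cite{ZL21}, i.e.\ inequality \eqref{lem4.3-1}: fix a \emph{binary} $\widehat w\in\mathcal W^k$ with $\mathrm{supp}(x_S)\subseteq\mathrm{supp}(\widehat w)$ and use at each stage the optimality comparison $\|y-A(v^{(i-1)}\otimes w^{(i)})\|_2\le\|y-A(v^{(i-1)}\otimes\widehat w)\|_2$, which is legitimate precisely because $\widehat w$ is feasible for \eqref{algorithm-ROTP-2}. Only after this substitution does one telescope; the resulting $\omega-1$ correction terms are $A[(u^p-x_S)\otimes(\bigotimes_{j=1}^i w^{(j)})\otimes(\mathbf e-\widehat w)]$ --- note the factor $(u^p-x_S)$ and the hard restriction to $\overline{\mathrm{supp}(\widehat w)}$, both consequences of $x_S\otimes(\mathbf e-\widehat w)=0$ --- and these are exactly what Lemma~\ref{lemma4.3-right} with $V=\mathrm{supp}(\widehat w)$ and $q=\lceil(n-k)/k\rceil$ is built to handle, while the head term $\|y-A(u^p\otimes\widehat w)\|_2$ is estimated as in the proof of Theorem~\ref{theorem-OTP} and supplies the $\delta_{2k}$ contribution and the lone $+\|\nu'\|_2$. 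Your final accounting of the constants is reverse-engineered to match \eqref{lem4.3-right-3} rather than derived; once the optimality step is inserted, the count of $\omega-1$ correction terms plus one head term produces those constants automatically.
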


\begin{proof}
 Let $\widehat{w}\in\mathcal{W}^k$ be a binary vector satisfying $ \textrm{supp} (x_S)\subseteq \textrm{supp} (\widehat{w})$ and $V=\textrm{supp} (\widehat{w})$.
From Lemma 4.3 in \cite{ZL21}, we get
\begin{align}\label{lem4.3-1}
  \| y-A[u^p\circ w_H^{(\omega)}] \|_2
  \leq
\| y-A(u^p\circ{\widehat{w}})\|_2+\sum_{i=1}^{\omega-1} \left\| A\left[(u^p-x_S)\circ w_H^{(i)}\circ ({\bf e}-\widehat{w})\right]\right\|_2,
\end{align}
where $w_H^{(i)}, ~i=1,\ldots,\omega, $ are given by \eqref{Had-prod-w}.
As $V= \textrm{supp} (\widehat{w})$ and $|V|=k$, it follows from \eqref{lem4.3-right-0} that
\begin{align}\label{lem4.3-right-1}
&   \left\|A\left[(u^p   -x_S)\circ w_H^{(i)}\circ ({\bf e}-\widehat{w})\right]\right\|_2
=\left\|A\left[(u^p-x_S)\circ w_H^{(i)}\right]_{\overline{\textrm{supp} (\widehat{w})}} \right\|_2\nonumber\\
& \leq \sqrt{1+\delta_k}\Big[(\xi_q|1-\alpha+\beta|+2\alpha\delta_{3k})\|x^p-x_S\|_2
+\beta \xi_q\|x^{p-1}-x_S\|_2 \nonumber \\
& ~~~ +2\alpha\sqrt{1+\delta_k}\| \nu'\|_2\Big],
\end{align}
where $q=\lceil  \frac{n-k}{k} \rceil $ and $i=1,\ldots,\omega-1$. We now estimate the term ${\|y-A(u^p\circ\widehat{w})\|}_2$ in \eqref{lem4.3-1}.
Because  $| \textrm{supp} (x^p-x_S)\cup  \textrm{supp} (\widehat{w})|\leq 2k$, by using \eqref{th-OTP-pf-3}  and Lemma \ref{lem-basic-ineq}, we obtain
\begin{align}
& \|(x_S-u^p)\circ\widehat{w}\|_2 \nonumber \\
& \leq  |1-\alpha+\beta|\cdot\|x^p-x_S\|_2
+\alpha \left\| \left[(I-A^ TA)(x^p-x_S)\right]_{\textrm{supp} (\widehat{w})} \right\|_2\nonumber\\
& ~~~ +\beta\| x^{p-1}-x_S\|_2+\alpha\left\| ( A^T \nu')_{\textrm{supp} (\widehat{w})} \right\|_2\nonumber\\
&  \leq  (|1-\alpha+\beta|+\alpha\delta_{2k})\|x^p-x_S\|_2
+\beta \|x^{p-1}-x_S\|_2+\alpha\sqrt{1+\delta_k}\| \nu'\|_2.\nonumber
\end{align}
It follows from \eqref{th-OTP-pf-2} that
\begin{align}\label{lem4.3-right-9}
 \|y-A(u^p\circ\widehat{w})\|_2  &  \leq  \sqrt{1+\delta_k} \Big[\big(|1-\alpha+\beta|+\alpha\delta_{2k}\big)\|x^p-x_S\|_2 \nonumber\\
& ~~~ +\beta \|x^{p-1}-x_S\|_2+\alpha\sqrt{1+\delta_k}\| \nu'\|_2\Big]+\| \nu'\|_2.
\end{align}
Combining \eqref{lem4.3-right-1}, \eqref{lem4.3-right-9} with \eqref{lem4.3-1} yields \eqref{lem4.3-right-2}.  \hfill $ \Box$
\end{proof}

The following property of the hard thresholding operator $\mathcal{H}_k(\cdot)$ is shown in \cite[Lemma 4.1]{ZL21}.
\begin{lemma} \emph{\cite{ZL21}}\label{lemma4.1}
Let $z,h\in\mathbb{R}^n$ be two vectors and  $\| h\|_0\leq k$. Then
\begin{equation*}\label{lemma4.1-1}
\|h-\mathcal{H}_k(z)\|_2\leq \|(z-h)_{S\cup S^*}\|_2+\|(z-h)_{S^*\setminus S}\|_2,
\end{equation*}
where $S:= \textrm{supp} (h)$ and $S^*:= \textrm{supp} (\mathcal{H}_k(z))$.
\end{lemma}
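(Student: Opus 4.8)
The plan is to localize the error vector $h-\mathcal{H}_k(z)$ on the index set $S\cup S^{*}$ and split it into the two pieces supported on $S^{*}$ and on $S\setminus S^{*}$, on each of which $\mathcal{H}_k(z)$ has an explicit description. Since $h$ is supported on $S$ and $\mathcal{H}_k(z)$ on $S^{*}$, the difference $h-\mathcal{H}_k(z)$ vanishes off $S\cup S^{*}$; moreover $\mathcal{H}_k(z)$ agrees with $z$ on $S^{*}=\mathcal{L}_k(z)$ and is zero on $(S\cup S^{*})\setminus S^{*}=S\setminus S^{*}$. Writing $e:=z-h$ and using that $S^{*}$ and $S\setminus S^{*}$ are disjoint, this yields the identity
\[
\|h-\mathcal{H}_k(z)\|_2^{2}=\|e_{S^{*}}\|_2^{2}+\|h_{S\setminus S^{*}}\|_2^{2}.
\]

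Next I would bound $\|h_{S\setminus S^{*}}\|_2$. From $h=z-e$ and the triangle inequality, $\|h_{S\setminus S^{*}}\|_2\le\|z_{S\setminus S^{*}}\|_2+\|e_{S\setminus S^{*}}\|_2$. Because $S^{*}\setminus S\subseteq\overline{S}$, the vector $h$ vanishes there, so $z_{S^{*}\setminus S}=e_{S^{*}\setminus S}$; hence it suffices to prove the comparison $\|z_{S\setminus S^{*}}\|_2\le\|z_{S^{*}\setminus S}\|_2$, after which $\|h_{S\setminus S^{*}}\|_2\le\|e_{S\setminus S^{*}}\|_2+\|e_{S^{*}\setminus S}\|_2$.

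For this comparison I would use the defining property of hard thresholding: $S^{*}=\mathcal{L}_k(z)$ collects $k$ largest-magnitude coordinates of $z$, so every coordinate of $z$ indexed in $S^{*}$ has absolute value at least that of every coordinate indexed outside $S^{*}$; in particular those on $S^{*}\setminus S$ dominate those on $S\setminus S^{*}$. Since $|S|\le k=|S^{*}|$ in the generic case, $|S^{*}\setminus S|=k-|S\cap S^{*}|\ge|S|-|S\cap S^{*}|=|S\setminus S^{*}|$, so one can inject $S\setminus S^{*}$ into $S^{*}\setminus S$ matching each index to one carrying a no-smaller $|z|$-value, which gives $\|z_{S\setminus S^{*}}\|_2\le\|z_{S^{*}\setminus S}\|_2$; the degenerate case $|S^{*}|<|S|$ forces $\|z\|_0<k$, hence $\mathcal{H}_k(z)=z$ and $h-\mathcal{H}_k(z)=-e$ is supported on $S\cup S^{*}$, making the conclusion immediate. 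This cardinality/domination step is the only nontrivial point; everything else is bookkeeping with disjoint supports and elementary norm inequalities.

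Finally I would combine the pieces. With $a:=\|e_{S^{*}}\|_2$, $b:=\|e_{S\setminus S^{*}}\|_2$, $c:=\|e_{S^{*}\setminus S}\|_2$, the identity above and the bound on $\|h_{S\setminus S^{*}}\|_2$ give $\|h-\mathcal{H}_k(z)\|_2\le\sqrt{a^{2}+(b+c)^{2}}$, and the elementary inequality $\sqrt{a^{2}+(b+c)^{2}}\le\sqrt{a^{2}+b^{2}}+c$ (valid for $a,b,c\ge 0$, by squaring and cancelling) yields $\|h-\mathcal{H}_k(z)\|_2\le\sqrt{a^{2}+b^{2}}+c$. Since $S^{*}$ and $S\setminus S^{*}$ partition $S\cup S^{*}$, one has $\sqrt{a^{2}+b^{2}}=\|e_{S\cup S^{*}}\|_2=\|(z-h)_{S\cup S^{*}}\|_2$ and $c=\|(z-h)_{S^{*}\setminus S}\|_2$, which is exactly the asserted bound.
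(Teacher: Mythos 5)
Your proof is correct. Note that the paper does not prove this lemma at all --- it is quoted verbatim from \cite{ZL21} (Lemma 4.1 there), so there is no in-paper argument to compare against; the proof in the cited source rests on the same single nontrivial ingredient you use, namely that $S^*=\mathcal{L}_k(z)$ carries the $k$ largest magnitudes of $z$, so $\|z_{S\setminus S^*}\|_2\leq \|z_{S^*\setminus S}\|_2=\|(z-h)_{S^*\setminus S}\|_2$ (using $h_{S^*\setminus S}=0$), and your cardinality/injection justification of this step, including the degenerate case $\|z\|_0<k$, is sound. The only difference is bookkeeping: the standard route applies the triangle inequality directly to $h-\mathcal{H}_k(z)=(h-z)_{S\cup S^*}+(z-\mathcal{H}_k(z))_{S\cup S^*}$ and observes $(z-\mathcal{H}_k(z))_{S\cup S^*}=z_{S\setminus S^*}$, whereas you first record the exact Pythagorean identity $\|h-\mathcal{H}_k(z)\|_2^2=\|(z-h)_{S^*}\|_2^2+\|h_{S\setminus S^*}\|_2^2$ and then invoke $\sqrt{a^2+(b+c)^2}\leq\sqrt{a^2+b^2}+c$; both are valid, yours is marginally longer but makes explicit where each piece of the error lives.
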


Let us state a fundamental property of  the orthogonal projection in the following lemma, which can be found in \cite[Eq.(3.21)]{F11} and \cite[p.49]{Z20}, and was extended to the general case in \cite[Lemma 4.2]{ZL20}.

\begin{lemma}\emph{\cite{F11,Z20,ZL20}} \label{lem-pursuit}
Let $x \in \mathbb{R}^n$ be a vector satisfying  $y = Ax+\nu$  where  $\nu$ is a  noise vector. Let $ S^*\subseteq N$ be an index set satisfying $|S^*|\leq k$ and
\begin{equation*}
z^*=\arg \min_{z\in \mathbb{R}^n} \{ {\| y-Az\|}_2^2:  \textrm{supp} (z)\subseteq S^*\}.
\end{equation*}
Then
\begin{equation*}\label{lem-pursuit-1}
\|z^*-x_S\|_2
\leq \frac{1}{\sqrt{1-(\delta_{2k})^2}}\| (z^*-x_S)_{\overline{S^*}}\|_2+\frac{ \sqrt{1+\delta_{k}}}{1-\delta_{2k}}\|\nu' \|_2,
\end{equation*}
where $S:=\mathcal{L}_k(x)$ and $\nu':=\nu+Ax_{\overline{S}}. $
\end{lemma}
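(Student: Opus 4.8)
The plan is to read off the claimed estimate from the first-order optimality (normal equations) of the least-squares subproblem defining $z^*$, and then to convert the resulting orthogonality relation into a norm bound by means of the restricted isometry estimates in Lemma~\ref{lem-basic-ineq}. Throughout, write $u:=z^*-x_S$ and split $u=u_{S^*}+u_{\overline{S^*}}$. Since $ \textrm{supp} (u)\subseteq S^*\cup S$ and $|S^*|\le k$, $|S|\le k$, the vector $u$ is $(2k)$-sparse; moreover $u_{\overline{S^*}}=-(x_S)_{\overline{S^*}}$ is $k$-sparse, and $\|u\|_2^2=\|u_{S^*}\|_2^2+\|u_{\overline{S^*}}\|_2^2$ because the two pieces have disjoint supports. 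Optimality of $z^*$ means precisely that the residual $y-Az^*$ is orthogonal to every vector $Az$ with $ \textrm{supp} (z)\subseteq S^*$, i.e. $\big(A^T(y-Az^*)\big)_{S^*}=0$. Using $y=Ax_S+\nu'$ (immediate from $y=Ax+\nu$ and $\nu'=\nu+Ax_{\overline{S}}$, $S=\mathcal{L}_k(x)$), one has $y-Az^*=-Au+\nu'$, hence $(A^TAu)_{S^*}=(A^T\nu')_{S^*}$; taking the inner product with $u_{S^*}$ gives $\langle Au_{S^*},Au\rangle=\langle Au_{S^*},\nu'\rangle$.

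The key step is to bound $\|u_{S^*}\|_2$ \emph{against $\|u\|_2$} rather than against $\|u_{\overline{S^*}}\|_2$ — the latter route would only yield the weaker constant $1/\sqrt{1-\delta_{2k}}$ in place of $1/\sqrt{1-\delta_{2k}^2}$. I would write $\|u_{S^*}\|_2^2=\langle u_{S^*},u\rangle=\langle u_{S^*},(I-A^TA)u\rangle+\langle Au_{S^*},Au\rangle=\langle u_{S^*},[(I-A^TA)u]_{S^*}\rangle+\langle Au_{S^*},\nu'\rangle$. By Lemma~\ref{lem-basic-ineq}(i), applied with $W=S^*$ and $t=2k$ (valid since $|S^*\cup \textrm{supp} (u)|\le 2k$), we get $\|[(I-A^TA)u]_{S^*}\|_2\le\delta_{2k}\|u\|_2$; by Lemma~\ref{lem-basic-ineq}(ii), $|\langle Au_{S^*},\nu'\rangle|=|\langle u_{S^*},(A^T\nu')_{S^*}\rangle|\le\sqrt{1+\delta_k}\,\|u_{S^*}\|_2\|\nu'\|_2$. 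Cancelling one factor $\|u_{S^*}\|_2$ (the case $u_{S^*}=0$ is trivial since then $\|u\|_2=\|u_{\overline{S^*}}\|_2\le\|u_{\overline{S^*}}\|_2/\sqrt{1-\delta_{2k}^2}$) yields $\|u_{S^*}\|_2\le\delta_{2k}\|u\|_2+\sqrt{1+\delta_k}\,\|\nu'\|_2$.

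Finally I would insert this into $\|u\|_2^2=\|u_{S^*}\|_2^2+\|u_{\overline{S^*}}\|_2^2$, obtaining $(1-\delta_{2k}^2)\|u\|_2^2\le 2\delta_{2k}\sqrt{1+\delta_k}\,\|\nu'\|_2\,\|u\|_2+(1+\delta_k)\|\nu'\|_2^2+\|u_{\overline{S^*}}\|_2^2$, and solve this quadratic inequality for the nonnegative quantity $\|u\|_2$. When the discriminant is expanded, the terms proportional to $(1+\delta_k)\|\nu'\|_2^2$ recombine so that only $\sqrt{(1+\delta_k)\|\nu'\|_2^2+(1-\delta_{2k}^2)\|u_{\overline{S^*}}\|_2^2}$ survives under the root; applying $\sqrt{a+b}\le\sqrt a+\sqrt b$ then collapses everything and produces exactly $\|z^*-x_S\|_2=\|u\|_2\le\frac{1}{\sqrt{1-\delta_{2k}^2}}\|(z^*-x_S)_{\overline{S^*}}\|_2+\frac{\sqrt{1+\delta_k}}{1-\delta_{2k}}\|\nu'\|_2$. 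I do not expect a genuinely hard obstacle; the delicate point is simply the choice to estimate $\|u_{S^*}\|_2$ through $\|u\|_2$ via Lemma~\ref{lem-basic-ineq}(i) on the full support of $u$, and then to keep $\delta_k$ and $\delta_{2k}$ in their correct roles so that the discriminant simplifies cleanly and the noise coefficient emerges as the advertised $\sqrt{1+\delta_k}/(1-\delta_{2k})$.
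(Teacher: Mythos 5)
Your proof is correct, and it is essentially the standard argument behind this lemma: the paper itself states it without proof, importing it from \cite{F11} (Eq.\ (3.21)), \cite{Z20} and \cite{ZL20}, where exactly this route is taken — normal equations for the least-squares step, the RIP consequences of Lemma~\ref{lem-basic-ineq} to get $\|u_{S^*}\|_2\le\delta_{2k}\|u\|_2+\sqrt{1+\delta_k}\|\nu'\|_2$, and the resulting quadratic inequality in $\|u\|_2$. Your algebra for the discriminant and the final constants $1/\sqrt{1-\delta_{2k}^2}$ and $\sqrt{1+\delta_k}/(1-\delta_{2k})$ checks out.
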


We now establish the  error bounds for HBROT$\omega$ and HBROTP$\omega$.

\begin{theorem}\label{theorem-main}
Suppose that $n>3k$ and denote $\sigma:=\lceil\frac{n-2k}{k}\rceil$.  Let $x \in \mathbb{R}^n$ be a vector satisfying $y = Ax+\nu$ where $\nu$ is a noise vector.
Denote
\begin{equation}\label{th-main-0}
 t_k:=\frac{\sqrt{1+\delta_k}}{\sqrt{1-\delta_{2k}}}, \ \ z_k:=\sqrt{1-\delta^2_{2k}}.
\end{equation}
   \begin{itemize}\item [ \emph{(i)} ] Assume that the $(3k)$-th order RIC, $\delta_{3k}$, of the   matrix $A$ and the nonnegative parameters  $(\alpha, \beta)$  satisfy $\delta_{3k}<\gamma^*(\omega)$ and
 \begin{equation}\label{th-main-1}
  \beta<\frac{1-d_1}{1+ d_1+d_2},~  \frac{(d_0+d_2+2)\beta+d_0}{d_0-d_1+1}<\alpha
<\frac{d_0+2-(d_2-d_0)\beta}{d_0+ d_1+1},
  \end{equation}
  where $\gamma^*(\omega)$ is the unique root of the equation $G_{\omega}(\gamma)=1$ in the interval  $(0,1)$, where
  \begin{equation}\label{th-main-2}
   G_{\omega}(\gamma):=(2\omega+1)\gamma \sqrt{\frac{1+\gamma}{1-\gamma}}+\gamma,
  \end{equation}
and the constants  $ d_0, d_1, d_2$ are given as
   \begin{equation}\label{th-main-3}
 \left\{\begin{array}{l} d_0:=t_k (\omega\xi_\sigma+1), \\
 d_1:=t_k(2\omega\delta_{3k}+\delta_{2k})+\delta_{3k}, \\
 d_2:=t_k  [\xi_\sigma(\omega-1)+1]\frac{2\omega\delta_{3k}+\delta_{2k}}{2(\omega-1)\delta_{3k}+\delta_{2k}}.
 \end{array}
 \right.
   \end{equation}
Then, the sequence $\{x^p\}$  produced by HBROT$\omega$ obeys
   \begin{equation}\label{th-main-4}
       \|x^{p}-x_S\|_2\leq \theta_1^{p-1}\left[\|x^1-x_S\|_2+(\theta_1-b_1)\|x^0-x_S\|_2\right]+\frac{b_3}{1-\theta_1}\| \nu'\|_2
   \end{equation}
with $\theta_1:=\frac{b_1+\sqrt{b_1^2+4b_2}}{2}$. The fact $\theta_1< 1$ is ensured under \eqref{th-main-1} and $b_1,b_2,b_3$ are given as
   \begin{align}\label{th-main-6}
   b_1:=& t_k c_\sigma+\left(|1+\beta-\alpha|+\alpha \delta_{3k}\right),~~
   b_2:= \beta t_k [\xi_\sigma(\omega-1)+1]\frac{c_\sigma}{ c_{1,\sigma}}+\beta ,\nonumber\\
   b_3:=&\frac{\alpha(2\omega-1)(1+\delta_k)+2}{\sqrt{1-\delta_{2k}}}\cdot\frac{2\delta_{3k}}{2(\omega-1)\delta_{3k}
   +\delta_{2k}}\cdot\frac{c_\sigma}{c_\sigma- c_{1,\sigma}}+\alpha \sqrt{1+\delta_k},
    \end{align}
where $ c_{1,\sigma}$ and $\xi_\sigma$ are given by \eqref{lem4.3-right-3} and \eqref{corollary-L2-L1-2}, respectively, and
 \begin{equation}\label{th-main-7}
c_\sigma:= (\omega\xi_\sigma+1)|1-\alpha+\beta|+ \alpha(2\omega\delta_{3k}+\delta_{2k}).
 \end{equation}

\item [ \emph{(ii)} ]   Suppose that the $(3k)$-th order RIC, $\delta_{3k}$, of the matrix $A$ and the nonnegative parameters $(\alpha, \beta)$  satisfy $\delta_{3k}<\gamma^{\sharp}(\omega)$ and
  \begin{equation}\label{th-main-pursuit-1}
  \beta<\frac{z_k-d_1}{1+ d_1+d_2},~ \frac{(d_0+d_2+2)\beta+d_0+1-z_k}{d_0-d_1+1}<
\alpha<\frac{d_0+1+z_k-(d_2-d_0)\beta}{d_0+ d_1+1},
   \end{equation}
   where the constants $ d_0, d_1, d_2$ are given by \eqref{th-main-3} and $\gamma^{\sharp}(\omega)$ is the unique root of the equation $\frac{1}{\sqrt{1-\gamma^2}}G_{\omega}(\gamma)=1$ in the interval $(0,1)$, where $G_{\omega}(\gamma)$ is given by \eqref{th-main-2}. Then, the  sequence $\{x^p\}$  produced by HBROTP$\omega$ obeys
    \begin{align}\label{th-main-pursuit-2}
        \|x^{p}-x_S\|_2 & \leq \theta_2^{p-1}\left[\|x^1-x_S\|_2+(\theta_2-\frac{b_1}{z_k})\|x^0-x_S\|_2\right]  \nonumber \\
        & ~~~ +\frac{1}{1-\theta_2}\left(\frac{b_3}{z_k}+\frac{ \sqrt{1+\delta_{k}}}{1-\delta_{2k}}\right)\| \nu'\|_2
    \end{align}
with $\theta_2:=\frac{b_1+\sqrt{b_1^2+4b_2z_k}}{2 z_k}$. The fact $\theta_2< 1$ is ensured under \eqref{th-main-pursuit-1}  and the constants $b_i(i=1,2,3)$  and $z_k$ are given by \eqref{th-main-6} and \eqref{th-main-0}, respectively.
\end{itemize}
\end{theorem}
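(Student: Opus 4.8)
The plan is to run the same two-level-recursion argument used in Theorem \ref{theorem-OTP}, but now with the relaxed thresholding machinery developed in Lemmas \ref{lemma4.2}--\ref{lem-pursuit}. The key quantity to control is $\|y - A[u^p\otimes(\bigotimes_{j=1}^\omega w^{(j)})]\|_2$, for which Lemma \ref{lemma4.3} already gives an upper bound linear in $\|x^p-x_S\|_2$, $\|x^{p-1}-x_S\|_2$ and $\|\nu'\|_2$, with coefficients $c_{1,q}$, $\beta[\xi_q(\omega-1)+1]$ and $\alpha(2\omega-1)(1+\delta_k)+1$ respectively, where $q=\lceil(n-k)/k\rceil$. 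Note that when we work with the index set $V$ of size $k$ in Lemma \ref{lemma4.3} we have $q=\sigma=\lceil(n-2k)/k\rceil$ only if we are careful about which index set is used; I would first reconcile the combinatorial parameter, using $V=S^{p+1}\cup \textrm{supp}(\widehat w)$ (or the support of the hard-thresholded vector) so that the relevant $q$ becomes $\sigma$ and the constants $c_\sigma$, $c_{1,\sigma}$, $\xi_\sigma$, $d_0,d_1,d_2$ from \eqref{th-main-3}--\eqref{th-main-7} appear. Then, exactly as in \eqref{th-OTP-pf-8}--\eqref{th-OTP-pf-10}, the definition of the algorithm (minimality of $w^{(1)}$ over $\mathcal P^k$, then monotonicity of the residual through the $\omega$ compressions, then $\mathcal H_k$) forces $\|y-Ax^{p+1}\|_2$ to be no larger than the bound of Lemma \ref{lemma4.3}, while Lemma \ref{lem-Az-sk}-type lower bounds (here via $z_k=\sqrt{1-\delta_{2k}^2}$ and $t_k=\sqrt{1+\delta_k}/\sqrt{1-\delta_{2k}}$) convert this into a bound on $\|x^{p+1}-x_S\|_2$.

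For part (i) (HBROT$\omega$): after applying Lemma \ref{lemma4.1} to $x^{p+1}=\mathcal H_k(\cdot)$ and then re-running the estimates of Lemmas \ref{lemma4.3-right} and \ref{lemma4.3} on the two pieces $(z-h)_{S\cup S^*}$ and $(z-h)_{S^*\setminus S}$, I expect to arrive at a recursion of the form
\begin{equation*}
\|x^{p+1}-x_S\|_2 \le b_1\|x^p-x_S\|_2 + b_2\|x^{p-1}-x_S\|_2 + b_3\|\nu'\|_2,
\end{equation*}
with $b_1,b_2,b_3$ precisely those in \eqref{th-main-6}. The assumption $\delta_{3k}<\gamma^*(\omega)$, where $G_\omega(\gamma)=1$, is exactly what is needed to guarantee $b_1+b_2<1$: one checks that at $\delta_{3k}=\gamma^*(\omega)$ the "worst-case" sum of the $\delta$-dependent coefficients equals $1$, using $\xi_\sigma\le \tfrac54\sqrt2$ and $t_k\le \sqrt{(1+\gamma)/(1-\gamma)}$, so that $G_\omega(\delta_{3k})<1$ gives room. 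The window for $(\alpha,\beta)$ in \eqref{th-main-1} is then the explicit solution set of the two linear (in $\alpha$) inequalities coming from $|1+\beta-\alpha|\le$ (something) in the two cases $\alpha\le 1+\beta$ and $\alpha>1+\beta$, analogous to the case split after \eqref{th-OTP-pf-13}. Once $b_1+b_2<1$ is in hand, Lemma \ref{lem-two-level-geometric} delivers \eqref{th-main-4} with $\theta_1=(b_1+\sqrt{b_1^2+4b_2})/2<1$ and the stated constant $b_3/(1-\theta_1)$ in front of $\|\nu'\|_2$.

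For part (ii) (HBROTP$\omega$): here the last step is the least-squares projection onto $S^{p+1}$, so I would invoke Lemma \ref{lem-pursuit} to write $\|x^{p+1}-x_S\|_2 \le z_k^{-1}\|(x^{p+1}-x_S)_{\overline{S^{p+1}}}\|_2 + \tfrac{\sqrt{1+\delta_k}}{1-\delta_{2k}}\|\nu'\|_2$, and then bound $\|(x^{p+1}-x_S)_{\overline{S^{p+1}}}\|_2$ by the HBROT$\omega$-type estimate (since $S^{p+1}$ is the support of the hard-thresholded compressed vector, the residual-on-complement term is controlled by the same quantities). This produces the recursion with $b_1/z_k$, $b_2 z_k/z_k=b_2$-scaled coefficients giving $\theta_2=(b_1+\sqrt{b_1^2+4b_2 z_k})/(2z_k)$; the condition $\delta_{3k}<\gamma^\sharp(\omega)$ with $\tfrac{1}{\sqrt{1-\gamma^2}}G_\omega(\gamma)=1$ is exactly what makes $b_1/z_k + b_2/z_k<1$, i.e. $\theta_2<1$, and \eqref{th-main-pursuit-1} is the corresponding $(\alpha,\beta)$-window. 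A second application of Lemma \ref{lem-two-level-geometric} then yields \eqref{th-main-pursuit-2}.

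The main obstacle I anticipate is the bookkeeping in part (i) that isolates the coefficient $b_2$ of $\|x^{p-1}-x_S\|_2$: the momentum term $\beta(x^p-x^{p-1})$ propagates through each of the $\omega$ data-compression steps, and one must track how it interacts with the telescoping in Lemma \ref{lemma4.3} (the factor $\xi_\sigma(\omega-1)+1$) and with the hard-thresholding split of Lemma \ref{lemma4.1}; getting the ratios $c_\sigma/c_{1,\sigma}$ and $c_\sigma/(c_\sigma-c_{1,\sigma})$ to come out exactly as in \eqref{th-main-6} requires expressing the $S^*\setminus S$ term in Lemma \ref{lemma4.1} as a fixed multiple of the $S\cup S^*$ term via the structure of $\mathcal H_k$ — this is the step where a careless estimate loses the sharp constant and the $(\alpha,\beta)$ window collapses. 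Everything else is a (lengthy but routine) assembly of Lemmas \ref{lem-basic-ineq}, \ref{lem-Az-sk}, \ref{corollary-L2-L1}, \ref{lemma4.3-right}, \ref{lemma4.3}, \ref{lemma4.1}, \ref{lem-pursuit} and \ref{lem-two-level-geometric}.
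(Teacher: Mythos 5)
Your skeleton (the Lemma \ref{lemma4.1} decomposition into the $X\cup S$ and $X\setminus S$ pieces, Lemmas \ref{lemma4.3-right}--\ref{lemma4.3} for the residual, Lemma \ref{lem-pursuit} for the pursuit step, and Lemma \ref{lem-two-level-geometric} to close the recursion) matches the paper, but the central technical device is missing, and one step you rely on is false. You assert that, ``exactly as in \eqref{th-OTP-pf-8}--\eqref{th-OTP-pf-10}'', the algorithm forces $\|y-Ax^{p+1}\|_2$ to be bounded by the quantity in Lemma \ref{lemma4.3}. That chain breaks for HBROT$\omega$: the final iterate is $x^{\sharp}=\mathcal{H}_k(u^p\otimes\bigotimes_j w^{(j)})$, and hard thresholding of a vector that is not $k$-sparse (the $w^{(j)}$ live in $\mathcal{P}^k$, not $\mathcal{W}^k$) does not decrease the residual, so $\|y-Ax^{\sharp}\|_2\le\|y-A[u^p\otimes\bigotimes_j w^{(j)}]\|_2$ need not hold. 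This is exactly why the OT/OTP argument does not carry over and why the paper calls this analysis nontrivial.

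What replaces it in the paper is a two-case analysis that you do not mention. Writing $\Theta_1=\|A[u^p\otimes(\bigotimes_j w^{(j)})-x_S]_{X\cup S}\|_2$ and $\Theta_2=\|A[u^p\otimes(\bigotimes_j w^{(j)})-x_S]_{\overline{X\cup S}}\|_2$, the paper fixes $\zeta\in(0,1)$ and distinguishes the case $\Theta_2\le\zeta\Theta_1$ (where the residual bound of Lemma \ref{lemma4.3} controls $(1-\zeta)\Theta_1$ and hence, via the RIP lower bound on the $2k$-sparse restriction, the $X\cup S$ piece with constant proportional to $c_{1,\sigma}/(1-\zeta)$) from the case $\Theta_2>\zeta\Theta_1$ (where Lemma \ref{lemma4.3-right} with $V=X\cup S$ controls the same piece with constant proportional to $c_{2,\sigma}/\zeta$). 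Taking the maximum over the two cases and optimizing over $\zeta$ at $\zeta^*=c_{2,\sigma}/(c_{1,\sigma}+c_{2,\sigma})$ is precisely what produces $c_\sigma=c_{1,\sigma}+c_{2,\sigma}$ and the ratios $c_\sigma/c_{1,\sigma}$ and $c_\sigma/(c_\sigma-c_{1,\sigma})=c_\sigma/c_{2,\sigma}$ appearing in $b_2$ and $b_3$. Your diagnosis that these ratios arise from relating the $X\setminus S$ term to the $X\cup S$ term ``via the structure of $\mathcal{H}_k$'' points at the wrong place: the $X\setminus S$ term is bounded directly by Lemma \ref{lem-basic-ineq} and contributes only the additive $|1+\beta-\alpha|+\alpha\delta_{3k}$, $\beta$, and $\alpha\sqrt{1+\delta_k}$ pieces of $b_1,b_2,b_3$. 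Without the $\Theta_1$/$\Theta_2$ dichotomy and the optimal $\zeta^*$, the recursion with the stated constants cannot be derived. The remainder of your outline (verifying that \eqref{th-main-1}, respectively \eqref{th-main-pursuit-1}, yields $b_1+b_2<1$, respectively $(b_1+b_2)/z_k<1$, via the case split on $\alpha\le 1+\beta$ versus $\alpha>1+\beta$, and the applications of Lemmas \ref{lem-pursuit} and \ref{lem-two-level-geometric}) is consistent with the paper.
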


\begin{proof}
Let  $x^{\sharp}=\mathcal{H}_k(u^p\circ w_H^{(\omega)})$ be generated by the Algorithms, where $w_H^{(\omega)}$ is given by \eqref{Had-prod-w}. By using Lemma \ref{lemma4.1}, we  have
\begin{equation}\label{th-main-pf-1}
\|x_S-x^{\sharp}\|_2\leq \|(u^p\circ w_H^{(\omega)}-x_S)_{X\cup S}\|_2+\|(u^p\circ w_H^{(\omega)}-x_S)_{X\setminus S}\|_2,
\end{equation}
where $X= \textrm{supp} (x^{\sharp})$.  Using  \eqref{th-OTP-pf-1} and the triangle inequality, we have that
\begin{align*}
& \|(u^p\circ w_H^{(\omega)}-x_S)_{X\setminus S} \|_2  = \|[(u^p-x_S)\circ w_H^{(\omega)}]_{X\setminus S} \|_2
\leq \|(u^p-x_S)_{X\setminus S} \|_2\\
&  \leq |1-\alpha+\beta|\cdot \left\|(x^p-x_S)_{X\setminus S} \right\|_2
+\alpha \|[(I-A^ TA)(x^p-x_S)]_{X\setminus S} \|_2\nonumber\\
&  ~~~ +\beta \|(x^{p-1}-x_S)_{X\setminus S} \|_2
+\alpha  \|(A^T \nu')_{X\setminus S} \|_2,\nonumber
\end{align*}
where the first equality is ensured by $(x_S)_{X\backslash S}=0$  and the first inequality is due to  \eqref{Had-prod-w} and $0\leq w^{(j)} \leq {\bf e } $ for $ j=1,\ldots,\omega.$ Since $|X\setminus S|\leq k$ and $| \textrm{supp} (x^p-x_S)\cup (X\setminus S)|\leq 3k$, by using Lemma \ref{lem-basic-ineq},  we see that
\begin{align}\label{th-main-pf-3}
\|(u^p\circ w_H^{(\omega)}-x_S)_{X\setminus S} \|_2
& \leq (|1+\beta-\alpha|  +\alpha \delta_{3k})\|x^p-x_S\|_2 \nonumber \\
& ~~ +
\beta\| x^{p-1}-x_S\|_2
  +\alpha \sqrt{1+\delta_k}\| \nu'\|_2.
\end{align}
Denote
\begin{equation}\label{def-Theta12}
\Theta_1:= \|A(u^p\circ w_H^{(\omega)}-x_S)_{X\cup S} \|_2,\ \
\Theta_2:=   \|A(u^p\circ w_H^{(\omega)}-x_S)_{\overline{X\cup S}} \|_2.\ \
\end{equation}
 As $|X\cup S|\leq 2k$, by using \eqref{def-RIC-1},  we obtain
 \begin{equation}\label{Theta1-lowbd}
 \Theta_1\geq\sqrt{1-\delta_{2k}} \|(u^p\circ w_H^{(\omega)}-x_S)_{X\cup S} \|_2.
 \end{equation}
For  any given  $\zeta\in (0,1)$, we  consider the following two cases associated with $\Theta_1$ and $\Theta_2$.

\textbf{Case 1. } $\Theta_2\leq\zeta\Theta_1$. Since $y=Ax_S+\nu'$, by   the triangle inequality and \eqref{def-Theta12}, we have
\begin{align}\label{th-main-pf-case1-1}
& \|y-A(u^p\circ w_H^{(\omega)})\|_2=  \|A(u^p\circ w_H^{(\omega)}-x_S)-\nu'\|_2\nonumber\\
  &= \|A(u^p\circ w_H^{(\omega)}-x_S)_{X\cup S}+A(u^p\circ w_H^{(\omega)}-x_S)_{\overline{X\cup S}}-\nu' \|_2\nonumber\\
 & \geq  \Theta_1-\Theta_2-\|\nu'\|_2\nonumber\\
 & \geq  (1-\zeta)\Theta_1-\|\nu'\|_2.
\end{align}
Merging \eqref{Theta1-lowbd},  \eqref{th-main-pf-case1-1} with \eqref{lem4.3-right-2} yields
 \begin{align}\label{th-main-pf-case1-2}
 & \|(u^p\circ w_H^{(\omega)}-x_S)_{X\cup S}  \|_2  \nonumber \\
  &  \leq \frac{1}{(1-\zeta)\sqrt{1-\delta_{2k}}} ( \|y-A(u^p\circ w_H^{(\omega)})\|_2+\|\nu'\|_2 )\nonumber\\
& \leq  \frac{t_k c_{1,{q_1}}}{1-\zeta}\|x^p-x_S\|_2
+\frac{\beta t_k}{1-\zeta}\big[\xi_{q_1}(\omega-1)+1\big] \|x^{p-1}-x_S\|_2 \nonumber \\
 & ~~ ~ +\frac{\alpha(2\omega-1)(1+\delta_k)
  +2}{(1-\zeta)\sqrt{1-\delta_{2k}}}\| \nu'\|_2,
 \end{align}
 where $q_1=\lceil\frac{n-k}{k}\rceil=\sigma+1$ and $t_k, c_{1,{q_1}}$ are  given in \eqref{th-main-0} and \eqref{lem4.3-right-3}, respectively.

 \textbf{Case 2. } $\Theta_2>\zeta\Theta_1$. From \eqref{def-Theta12} and \eqref{Theta1-lowbd}, we obtain
  \begin{equation}\label{th-main-pf-case2-1}
  \|(u^p\circ w_H^{(\omega)}-x_S)_{X\cup S} \|_2 \leq   \frac{1}{\zeta\sqrt{1-\delta_{2k}}} \|A[(u^p-x_S)\circ w_H^{(\omega)}]_{\overline{X\cup S}}  \|_2.
  \end{equation}
 Taking $V=X\cup S$ and $i=\omega$ in \eqref{lem4.3-right-0}, one has
  \begin{align}\label{th-main-pf-case2-2}
&  \|A[(u^p-x_S)\circ w_H^{(\omega)}]_{\overline{X\cup S}}  \|_2\nonumber\\
& \leq    \sqrt{1+\delta_k} \left[  c_{2,{q_2}}\|x^p-x_S\|_2
  +\beta \xi_{q_2}\|x^{p-1}-x_S\|_2+2\alpha\sqrt{1+\delta_k}\| \nu'\|_2\right]
  \end{align}
with  $q_2=\lceil\frac{n-|X\cup S|}{k}\rceil\geq \sigma$ which is  due to   $|X\cup S|\leq2k$, and $c_{2,{q_2}}$ is defined as
  \begin{equation}\label{th-main-pf-case2-2-1}
  c_{2,{q_2}}:=\xi_{q_2}|1-\alpha+\beta|+2\alpha\delta_{3k}.
   \end{equation}
  Substituting \eqref{th-main-pf-case2-2} into \eqref{th-main-pf-case2-1}, we get
    \begin{align}\label{th-main-pf-case2-3}
 \|(u^p\circ w_H^{(\omega)}-x_S)_{X\cup S} \|_2
  & \leq   \frac{t_k}{\zeta}  [  c_{2,{q_2}}\|x^p-x_S\|_2
    +\beta \xi_{q_2}\|x^{p-1}-x_S\|_2  \nonumber \\
    & ~~~  +2\alpha\sqrt{1+\delta_k}\| \nu'\|_2 ].
    \end{align}
From \eqref{corollary-L2-L1-2}, we see that $\xi_q$ is decreasing in $[2,n]$. For $q_1=\sigma+1$ and $q_2\geq \sigma\geq 2$, we have $\xi_{q_1},\xi_{q_2}\leq \xi_\sigma$.  It follows from \eqref{lem4.3-right-3} and \eqref{th-main-pf-case2-2-1} that $c_{1,{q_1}}\leq c_{1,\sigma}$ and $c_{2,{q_2}}\leq c_{2,\sigma}$. Combining  \eqref{th-main-pf-case1-2} and \eqref{th-main-pf-case2-3} leads to
 \begin{align}\label{th-main-pf-4}
&  \|(u^p\circ w_H^{(\omega)}-x_S)_{X\cup S} \|_2\nonumber\\
 & \leq    t_k\max\left\{\frac{ c_{1,\sigma}}{1-\zeta},\frac{  c_{2,\sigma}}{\zeta}\right\}\|x^p-x_S\|_2\nonumber\\
& ~~~ +\beta t_k\max\left\{\frac{\xi_\sigma(\omega-1)+1}{1-\zeta},\frac{\xi_\sigma}{\zeta}\right\}\|x^{p-1}-x_S\|_2\nonumber\\
 & ~~~ +\frac{1}{\sqrt{1-\delta_{2k}}}\max\left\{\frac{\alpha(2\omega-1)(1+\delta_k)
  +2}{1-\zeta},\frac{2\alpha(1+\delta_k)}{\zeta}\right\}\| \nu'\|_2
 \end{align}
  for any $\zeta\in (0,1)$.

Next, we   select a suitable parameter $\zeta\in (0,1)$ such that the right hand of \eqref{th-main-pf-4} is as small as possible.
 For $\delta_{2k}\leq\delta_{3k}$ and $\xi_\sigma<2$  in \eqref{corollary-L2-L1-2}, we have
 \begin{equation}\label{th-main-pf-case2-5}
 \frac{ c_{2,\sigma}}{ c_{1,\sigma}}=\frac{\xi_\sigma|1-\alpha+\beta|+2\alpha\delta_{3k}}{[\xi_\sigma(\omega-1)+1]|1-\alpha+\beta|+\alpha[2(\omega-1)\delta_{3k}+\delta_{2k}]}
 \leq\frac{2\delta_{3k}}{2(\omega-1)\delta_{3k}+\delta_{2k}}.
 \end{equation}
 It is easy to check that
 \begin{equation}\label{th-main-pf-5}
 \min_{\zeta\in (0,1)}\max\left\{\frac{ c_{1,\sigma}}{1-\zeta},\frac{  c_{2,\sigma}}{\zeta}\right\}= c_{1,\sigma}+c_{2,\sigma}
 =c_\sigma,
 \end{equation}
 where $c_\sigma$ is given by \eqref{th-main-7} and its minimum attains at
 \begin{equation}\label{th-main-pf-6}
\zeta^*=\frac{ c_{2,\sigma}}{ c_{1,\sigma}+ c_{2,\sigma}}=\frac{\xi_\sigma|1-\alpha+\beta|+2\alpha\delta_{3k}}{(\omega\xi_\sigma+1\big)|1-\alpha+\beta|+ \alpha(2\omega\delta_{3k}+\delta_{2k})}.
  \end{equation}
That is,
    \begin{equation}\label{add-zeta-1}
   \max\left\{\frac{ c_{1,\sigma}}{1-\zeta^*},\frac{  c_{2,\sigma}}{\zeta^*}\right\}=c_\sigma.
 \end{equation}
Moreover, noting that $\xi_\sigma<2$ and $\delta_{2k}\leq \delta_{3k}$, we have $\zeta^*\geq\frac{\xi_\sigma}{\omega\xi_\sigma+1}$.
  In particular, by taking  $\zeta=\zeta^*$ in (\ref{th-main-pf-4}), we deduce that
  \begin{equation*}\label{th-main-pf-6-1}
  \max\left\{\frac{\xi_\sigma(\omega-1)+1}{1-\zeta^*},\frac{\xi_\sigma}{\zeta^*}\right\}=\frac{\xi_\sigma(\omega-1)+1}{1-\zeta^*}=\frac{\xi_\sigma(\omega-1)+1}{ c_{1,\sigma}}c_\sigma,
   \end{equation*}
   and
    \begin{align}\label{th-main-pf-6-2}
 & \max\left\{\frac{\alpha(2\omega-1)(1+\delta_k)
     +2}{1-\zeta^*}, ~\frac{2\alpha(1+\delta_k)}{\zeta^*}\right\}  \nonumber\\
 & =  \frac{1}{\zeta^*}\max\left\{[\alpha(2\omega-1)(1+\delta_k)
     +2]\frac{\zeta^*}{1-\zeta^*}, ~2\alpha(1+\delta_k)\right\} \nonumber\\
  &  = \frac{c_\sigma}{c_{2,\sigma}}\max\left\{[\alpha(2\omega-1)(1+\delta_k)
     +2]\frac{c_{2,\sigma}}{c_{1,\sigma}}, ~2\alpha(1+\delta_k)\right\} \nonumber\\
   &  \leq
     \frac{c_\sigma}{c_{2,\sigma}}\max\left\{[\alpha(2\omega-1)(1+\delta_k)
     +2]\frac{2\delta_{3k}}{2(\omega-1)\delta_{3k}+\delta_{2k}}, ~2\alpha(1+\delta_k)\right\}\nonumber \\
 &  =   \Big[\alpha(2\omega-1)(1+\delta_k)+2\Big]\frac{2\delta_{3k}}{2(\omega-1)\delta_{3k}+\delta_{2k}}\cdot\frac{c_\sigma}{ c_{2,\sigma}},
    \end{align}
 where the second equality is given by \eqref{th-main-pf-6}, the inequality above follows from \eqref{th-main-pf-case2-5},  and the last equality holds owing to $\delta_{2k}\leq \delta_{3k}$.
Merging \eqref{th-main-pf-4} with \eqref{add-zeta-1}-\eqref{th-main-pf-6-2},  we obtain
  \begin{align}\label{th-main-pf-7}
&  \|(u^p\circ w_H^{(\omega)}-x_S)_{X\cup S} \|_2 \nonumber \\
  & \leq   t_kc_\sigma \|x^p-x_S\|_2
+\beta t_k \frac{\xi_\sigma(\omega-1)+1}{ c_{1,\sigma}}c_\sigma \|x^{p-1}-x_S\|_2\nonumber\\
  & ~~~ +\frac{\alpha(2\omega-1)(1+\delta_k)+2}{\sqrt{1-\delta_{2k}}} \cdot \frac{2\delta_{3k}}{2(\omega-1)\delta_{3k}+\delta_{2k}}\cdot\frac{c_\sigma}{ c_{2,\sigma}} \| \nu'\|_2.
  \end{align}
  Combining \eqref{th-main-pf-3}, \eqref{th-main-pf-7} with  \eqref{th-main-pf-1}, we have
  \begin{equation}\label{th-main-pf-8}
   \|x_S-x^{\sharp}\|_2\leq b_1\|x^p-x_S\|_2+b_2\|x^{p-1}-x_S\|_2+b_3\| \nu'\|_2,
  \end{equation}
  where the constants $b_1,b_2,b_3$ are given by \eqref{th-main-6}.

Next, we   estimate $    \|x^{p+1}-x_S\|_2$ for HBROT$\omega$  and HBROTP$\omega$  based on the relation \eqref{th-main-pf-8}.

(i) Since $x^{p+1}=x^{\sharp}$ in HBROT$\omega$, \eqref{th-main-pf-8} becomes
   \begin{equation}\label{th-main-pf-10}
    \|x^{p+1}-x_S\|_2\leq b_1\|x^p-x_S\|_2+b_2\|x^{p-1}-x_S\|_2+b_3\| \nu'\|_2.
   \end{equation}
Now, we consider the conditions of Lemma  \ref{lem-two-level-geometric}.
   Merging \eqref{th-main-pf-case2-5} with \eqref{th-main-pf-5} produces
     \begin{equation*}
 \frac{ c_\sigma}{ c_{1,\sigma}}\leq \frac{2\omega\delta_{3k}+\delta_{2k}}{2(\omega-1)\delta_{3k}+\delta_{2k}}.
    \end{equation*}
   It follows from \eqref{th-main-6} and  \eqref{th-main-7} that
   \begin{align}\label{th-main-pf-12}
   b_1+ b_2 & \leq t_k c_\sigma+\left(|1+\beta-\alpha|+\alpha \delta_{3k}\right) \nonumber \\
   & ~~~ + \left\{t_k  \frac{2\omega\delta_{3k}+\delta_{2k}}{2(\omega-1)\delta_{3k}+\delta_{2k}}[\xi_\sigma(\omega-1)+1]+1\right\}\beta=F(\alpha,\beta),
    \end{align}
    where
    \begin{align}\label{def-F}
    F(\alpha,\beta):=&(d_0+1)|1-\alpha+\beta|+ d_1\alpha+ (d_2+1)\beta,\nonumber\\
    =&\left\{
\begin{array}{ll}
-(d_0-d_1+1)\alpha+(d_0+d_2+2)\beta+d_0+1, &\textrm{ if }\ \alpha\leq 1+\beta,\\
(d_0+ d_1+1)\alpha+(d_2-d_0)\beta-(d_0+1),&\textrm{ if }\ \alpha> 1+\beta,
\end{array}
\right.
\end{align}
with  the constants $ d_0, d_1, d_2$ are given by \eqref{th-main-3}.

 Based on the fact $\delta_k\leq \delta_{2k}\leq \delta_{3k}<\gamma^*(\omega)$ and  the function $G_{\omega}(\gamma)$  in \eqref{th-main-2} is strictly increasing in the interval $(0,1)$,  by using \eqref{th-main-3}, we have
        \begin{equation}\label{th-main-pf-14-1}
  d_1\leq[(2\omega+1)t_k+1] \delta_{3k}\leq G_{\omega}(\delta_{3k})< G_{\omega}(\gamma^*(\omega))=1,
        \end{equation}
   which shows that the range of  $\beta$  in \eqref{th-main-1} is well defined.
 From the first inequality in \eqref{th-main-1}, we see that
 \begin{equation}\label{th-main-pf-15}
 \frac{(d_0+d_2+2)\beta+d_0}{d_0-d_1+1}<
1+\beta<\frac{d_0+2-(d_2-d_0)\beta}{d_0+ d_1+1},
  \end{equation}
     which implies that the range of $\alpha$  in \eqref{th-main-1} is also  well defined.
  Merging \eqref{def-F}-\eqref{th-main-pf-15} with the second inequality in \eqref{th-main-1}, we see that if  $  \frac{(d_0+d_2+2)\beta+d_0}{d_0-d_1+1}<\alpha\leq 1+\beta, $ then
 $$ F(\alpha,\beta)
< -(d_0-d_1+1)\frac{(d_0+d_2+2)\beta+d_0}{d_0-d_1+1}+(d_0+d_2+2)\beta+d_0+1= 1,$$ and if $ 1+\beta<\alpha<\frac{d_0+2-(d_2-d_0)\beta}{d_0+ d_1+1},$ then
 $$ F(\alpha,\beta) < (d_0+ d_1+1)\frac{d_0+2-(d_2-d_0)\beta}{d_0+ d_1+1}+(d_2-d_0)\beta-(d_0+1) =1.$$
It follows from \eqref{th-main-pf-12} that $b_1+b_2<1$. Hence, applying Lemma \ref{lem-two-level-geometric} to the relation \eqref{th-main-pf-10}, we conclude that \eqref{th-main-4} holds with $\theta_1=\frac{b_1+\sqrt{b_1^2+4b_2}}{2}<1$.

  (ii)  Since $x^{p+1}$ is given by \eqref{algorithm-ROTP-3} and $S^{p+1}= \textrm{supp} (x^{\sharp})$ in HBROTP$\omega$, by setting $S^*=S^{p+1}$ and $z^*=x^{p+1}$  in Lemma \ref{lem-pursuit}, we have
  \begin{align}\label{th-main-pf-pt2-1}
  \|x^{p+1}-x_S\|_2 & \leq \frac{1}{z_k}\left\|(x^{\sharp}-x_S)_{\overline{S^{p+1}}} \right \|_2+\frac{ \sqrt{1+\delta_{k}}}{1-\delta_{2k}}\| \nu' \|_2
 \nonumber\\
 & \leq \frac{1}{z_k}\|x^{\sharp}-x_S\|_2+\frac{ \sqrt{1+\delta_{k}}}{1-\delta_{2k}}\| \nu' \|_2,
  \end{align}
  where $z_k$ is given in \eqref{th-main-0} and the first inequality follows from the fact $(x^{p+1})_{\overline{S^{p+1}}}=(x^{\sharp})_{\overline{S^{p+1}}}=0$.
  Combining \eqref{th-main-pf-pt2-1} with \eqref{th-main-pf-8}, we have
  \begin{align}\label{th-main-pf-pt2-2}
\|x^{p+1}-x_S\|_2 & \leq \frac{b_1}{z_k}\|x^p-x_S\|_2+\frac{b_2}{z_k}\|x^{p-1}-x_S\|_2+\left(\frac{b_3}{z_k}+\frac{ \sqrt{1+\delta_{k}}}{1-\delta_{2k}}\right)\| \nu' \|_2.
 \end{align}
Similar to the analysis in Part (i), we need to show that $ \frac{b_1}{z_k}+ \frac{b_2}{z_k}<1$.

From the conditions of Theorem \ref{theorem-main}(ii), we have $\delta_{2k}\leq \delta_{3k}<\gamma^{\sharp}(\omega)$. Since the function $G_{\omega}(\gamma)$ in \eqref{th-main-2} is strictly increasing in $(0,1)$,
one has
 \begin{equation*}\label{th-main-pf-pt2-3}
   d_1\leq G_{\omega}(\delta_{3k})< G_{\omega}(\gamma^{\sharp}(\omega))=\sqrt{1-(\gamma^{\sharp}(\omega))^2}<\sqrt{1-(\delta_{2k})^2}=z_k,
 \end{equation*}
   where the first inequality is given by \eqref{th-main-pf-14-1}, the first equality follows from the fact that $\gamma^{\sharp}(\omega)$ is the  root of $\frac{1}{\sqrt{1-\gamma^2}}G_{\omega}(\gamma)=1$ in $(0,1)$ and the last equality is given by \eqref{th-main-0}.
 It follows that the range of  $\beta$  in  \eqref{th-main-pursuit-1} is well defined.
  From the first  inequality in \eqref{th-main-pursuit-1}, we derive
  \begin{equation}\label{th-main-pf-pt2-4}
  \frac{(d_0+d_2+2)\beta+d_0+1-z_k}{d_0-d_1+1}<
 1+\beta<\frac{d_0+1+z_k-(d_2-d_0)\beta}{d_0+ d_1+1},
   \end{equation}
   which means that the range of  $\alpha$ in \eqref{th-main-pursuit-1} is well defined.
   Combining \eqref{def-F}, \eqref{th-main-pf-pt2-4} with the second inequality in \eqref{th-main-pursuit-1} leads to
 \begin{align*}
F(\alpha,\beta)
<&\left\{
\begin{array}{ll}
-(d_0-d_1+1)  \frac{(d_0+d_2+2)\beta+d_0+1-z_k}{d_0-d_1+1}+(d_0+d_2+2)\beta+d_0+1, \\
 ~~~~~ ~~~~~ ~~~~~ ~~~~~\textrm{ if }   ~ \frac{(d_0+d_2+2)\beta+d_0+1-z_k}{d_0-d_1+1}<\alpha\leq 1+\beta,\\\\
(d_0+ d_1+1)\frac{d_0+1+z_k-(d_2-d_0)\beta}{d_0+ d_1+1}+(d_2-d_0)\beta-(d_0+1), \\
~~~~~ ~~~~~ ~~~~~ ~~~~~\textrm{ if }  ~ 1+\beta<\alpha<\frac{d_0+1+z_k-(d_2-d_0)\beta}{d_0+ d_1+1},
\end{array}
\right.\\
=& z_k.\nonumber
 \end{align*}
 It follows from \eqref{th-main-pf-12} that $ \frac{b_1}{z_k}+ \frac{b_2}{z_k}<1$.  Therefore, by Lemma \ref{lem-two-level-geometric},  it follows from \eqref{th-main-pf-pt2-2} that \eqref{th-main-pursuit-2} holds with $\theta_2=\frac{b_1+\sqrt{b_1^2+4b_2z_k}}{2 z_k}<1$.  \hfill $ \Box$
\end{proof}

\begin{remark}

   \begin{itemize}
   \item [ \emph{(i)} ] When $\nu=0$ and $x$ is a $k$-sparse vector,  from \eqref{th-main-4} and \eqref{th-main-pursuit-2}, we observe that the sequence $\{ x^p\}$ generated by  HBROT$\omega$ or HBROTP$\omega$  converges to $x$.

       \item [ \emph{(ii)} ] The condition $n>3k$ in Theorem \ref{theorem-main} can be removed. If so, the constant $\xi_\sigma$ will be replaced by $\underset{q\geq 1}{\max} ~\xi_q=\frac{5}{4}\sqrt{2}$ (see Corollary \ref{corollary-L2-L1}). In addition, if $n>9k$, then $\sigma=\lceil\frac{n-2k}{k}\rceil\geq 8$. In this case,  we see from  \eqref{corollary-L2-L1-2} that $\xi_\sigma$ in Theorem \ref{theorem-main} can be replaced by $\underset{q\geq 2}{\min} ~\xi_q=\sqrt{2}$.

    \item [ \emph{(iii)} ] When $\omega=1$, HBROT$\omega$ and HBROTP$\omega$ reduce to HBROT and HBROTP, respectively. In this case, the RIP  bounds   in Theorem \ref{theorem-main} are reduced to  $\delta_{3k}<\gamma^*(1)\approx 0.2118$ for HBROT and $\delta_{3k}<\gamma^{\sharp}(1)\approx0.2079$ for HBROTP.

     \item [ \emph{(iv)} ]   It is not convenient to calculate the RIC of the   matrix A and \eqref{th-main-pursuit-1} is just a sufficient condition for the theoretical performance of HBROTP$\omega$.  In practical implementation,   the parameters $(\alpha,\beta)$ in HBROTP may be set as $ 0\leq \beta <1/4$ and $ \alpha \geq 1+\beta
         $ for simplicity to roughly meet the conditions \eqref{th-main-pursuit-1}.
   \end{itemize}

\end{remark}

\section{Numerical experiments}\label{simulation}

Sparse signal and image recovery through measurements $ y = Ax+\nu,$ where $ x$ denotes the signal/image to recover, is a typical linear inverse problem. In this section, we provide some experiment results for the proposed HBROTP algorithm  and compare its performance with several existing methods.
The  experiments  in Sections \ref{phase_trans} and \ref{real_image} are performed on a server with the processor Intel(R) Xeon(R)  CPU E5-2680 v3@ 2.50GHz  and 256GB memory, while others are performed  on a PC with the processor Intel(R) Core(TM) i7-10700 CPU
@ 2.90 GHz and 16 GB memory. All involved convex optimization problems  are solved by CVX \cite{GB17}  with solver \emph{`Mosek'} \cite{AA20}.  The comparison of six algorithms including HBROTP, ROTP2, PGROTP, $\ell_1$-min, OMP and PLB is mainly made via the phase transitions based on synthetic data together with the reconstruction, deblurring and denoising of a few real images.

\subsection{Phase transition}\label{phase_trans}
The first experiment is carried out to compare the performances of the algorithms except PLB through the phase transition curve (PTC)  \cite{BT15,BTW15} and average recovery time. All sparse vectors $x^*\in\mathbb{R}^{ n}$ and  matrices $A\in\mathbb{R}^{m\times n}$ are  randomly generated,  and the position of nonzero elements of $ x^*$ follows the uniform distribution.  In addition, all columns of  $A$   are  normalized and the entries of $A$ and the nonzeros of $x^*$  are independent and identically distributed random variables following ${\mathcal N}(0,1)$.   In this experiment, we  consider both  accurate measurements $y=Ax^*$ and  inaccurate measurements $y=Ax^*+\epsilon h$ with fixed $n=1000$,  where $\epsilon=5\times 10^{-3}$ is the noise level and $h\in\mathbb{R}^{ m}$ is a normalized standard Gaussian noise. We let  HBROTP start from $x^1=x^0= 0$ with fixed parameters $\alpha=5$ and $ \beta = 0.2$, while other algorithms start from  $x^0= 0.$  The maximum number of iterations of HBROTP, ROTP2 and PGROTP is set as 50, while  OMP is performed exactly $k$ iterations and $\ell_1$-min is performed by the solver  \emph{`Mosek'} directly. Given the random data $(A,x^*)$ or $(A,x^*,h),$   the recovery is counted as \emph{`success'} when the  criterion
\begin{equation*}\label{recov-criter}
\|x^p-x^*\|_2/\|x^*\|_2\leq 10^{-3}
\end{equation*}
is satisfied, in which $x^p$ is the solution generated by algorithms.

Denote by $\kappa=m/n$ and $\rho=k/m$, where $\kappa$ is often called the sampling rate or the compression ratio. In the $(\kappa, \rho)$-space, the region below the PTC  is  called the \emph{`success'} recovery region,  where the solution of the SLI problem can be exactly or approximately recovered, while the region above the PTC corresponds to the \emph{`failure'} region. Thus if the region below the PTC is wider, the performance of an algorithm would be better. We now briefly describe the mechanism for plotting the PTC  which is taken as  the classical 50\% logistic regression curve, and more detailed information can be found in  \cite{BT15,BTW15}. To generate the PTCs, 13 groups of   $m=\lceil \kappa\cdot n\rceil$ are considered, where the sampling rate $\kappa$ is ranged from 0.1 to 0.7 with stepsize 0.05. For any given $m$,  by using the  bisection method,  the approximated recovery phase transition region $[k_{\min},k_{\max}]$ is  produced for each algorithm, in which  the success rate of recovery is at least 90\%   as $k<k_{\min}$ and at most 10\%  as $k>k_{\max}$. The interval $[k_{\min},k_{\max}]$ will be equally divided into $\min\{k_{\max}-k_{\min},50\}$ parts, and 10 problem instances are tested for each $k$ to produce the  recovery success rate for given algorithm.  Thus the PTCs can be obtained from the logistic regression model in \cite{BT15,BTW15} directly.

\begin{figure}[htbp]
\subfigure[Accurate measurements]{
\begin{minipage}[t]{0.45\linewidth}
\centering
  \includegraphics[width=\textwidth,height=0.8\textwidth]{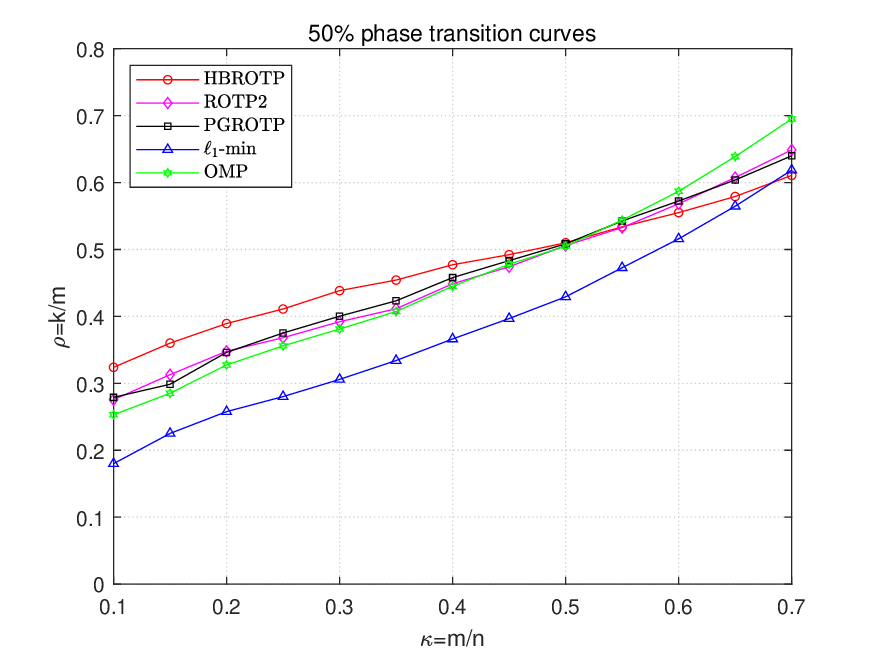}
   \end{minipage}
   }
   \subfigure[Inaccurate measurements ($\epsilon=5\times 10^{-3}$)]{
\begin{minipage}[t]{0.45\linewidth}
\centering
  \includegraphics[width=\textwidth,height=0.8\textwidth]{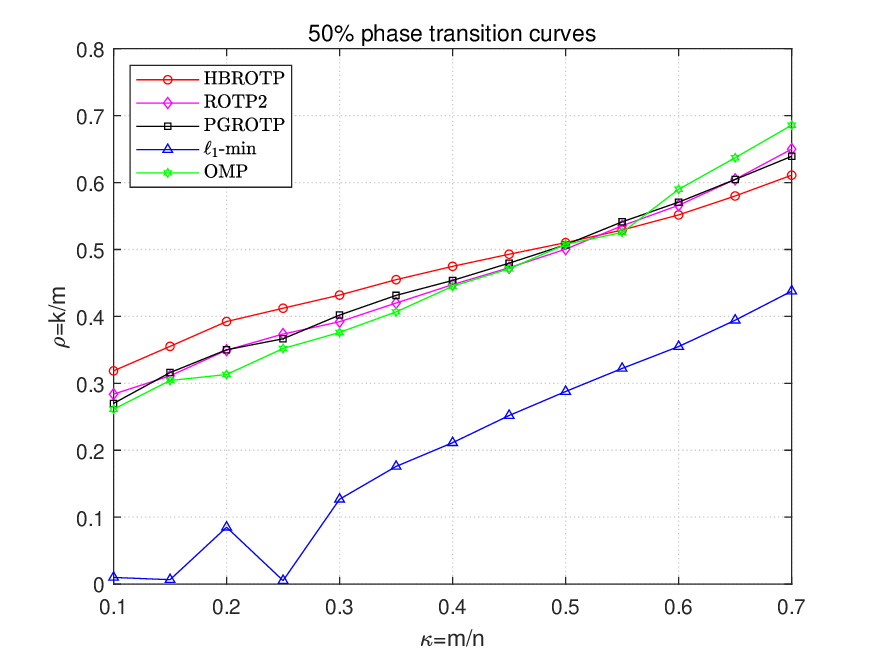}
   \end{minipage}
   }
   \caption{The $50 \%$ success rate phase transition curves for  algorithms.  }   \label{PTC-Map}
   \end{figure}

   The PTCs for the experimented algorithms are shown in  Fig. \ref{PTC-Map}(a) and (b), which correspond to the accurate measurements and inaccurate measurements with the noise level $\epsilon=5\times 10^{-3}$, respectively. The results indicate that HBROTP  has the highest PTC as $\kappa\leq 0.5,$ in which case  the recovery capability of HBROTP is superior to other algorithms in this experiment. However, the PTCs indicate that  ROTP2, PGROTP and OMP may perform relatively better than HBROTP with a larger $\kappa$. The comparison in Fig. \ref{PTC-Map}(a) and (b) demonstrates that all algorithms are robust for signal recovery when the measurements are slightly inaccurate except $\ell_1$-min. The comparison indicates that the overall performance of HBROTP is very comparable to those existing methods in this experiment.

   \begin{figure}[h]
   \centering
  \subfigure[Time  ratio of  ROTP2 and HBROTP]{
   \begin{minipage}[t]{0.45\linewidth}
   \centering
  \includegraphics[width=\textwidth,height=0.8\textwidth]{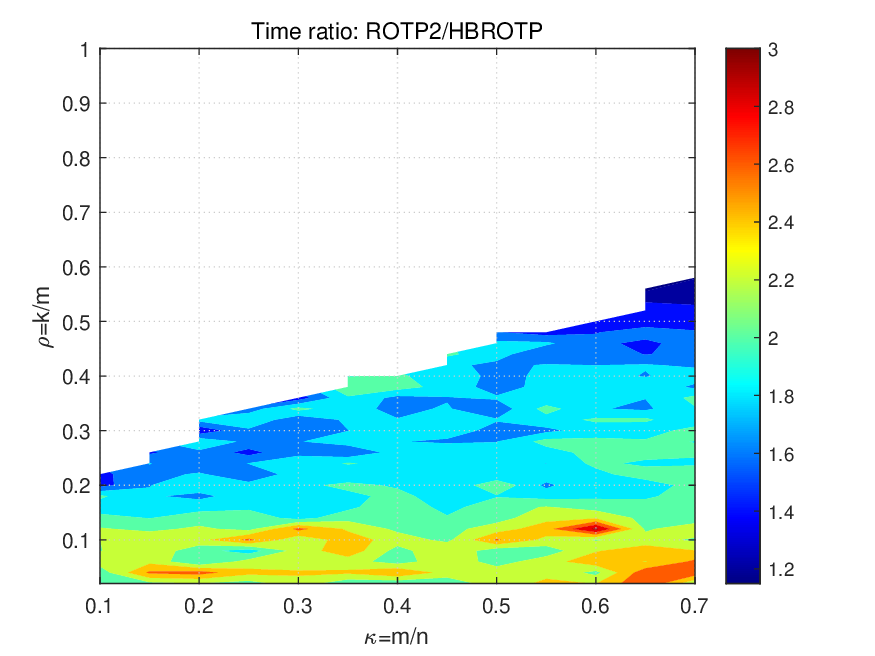}
  \end{minipage}
    }
   \subfigure[Time ratio of PGROTP and HBROTP]{
   \begin{minipage}[t]{0.45\linewidth}
  \centering
 \includegraphics[width=\textwidth,height=0.8\textwidth]{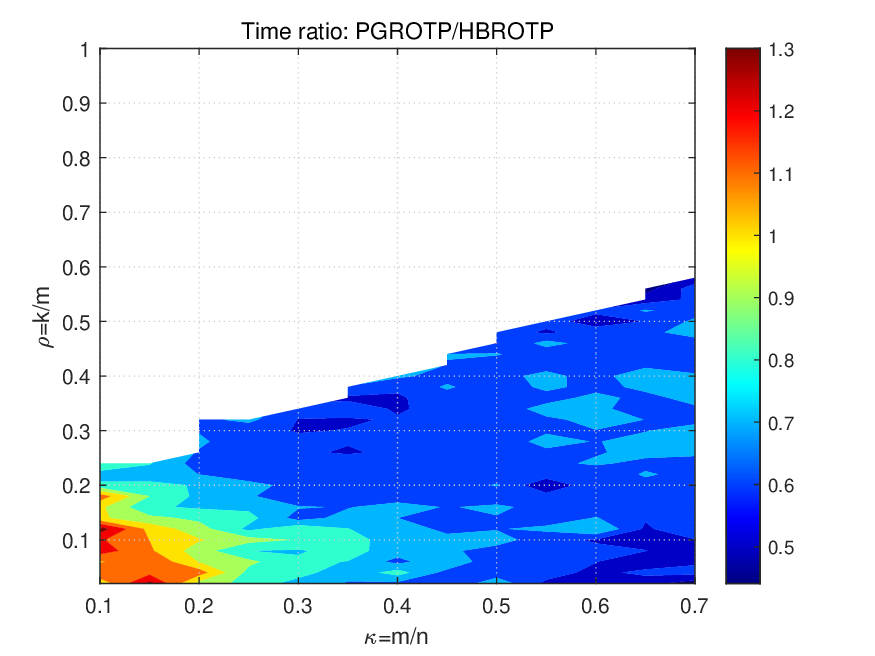}
    \end{minipage}
    }
    \subfigure[Time  ratio of  $\ell_1$-min and HBROTP]{
   \begin{minipage}[t]{0.45\linewidth}
   \centering
  \includegraphics[width=\textwidth,height=0.8\textwidth]{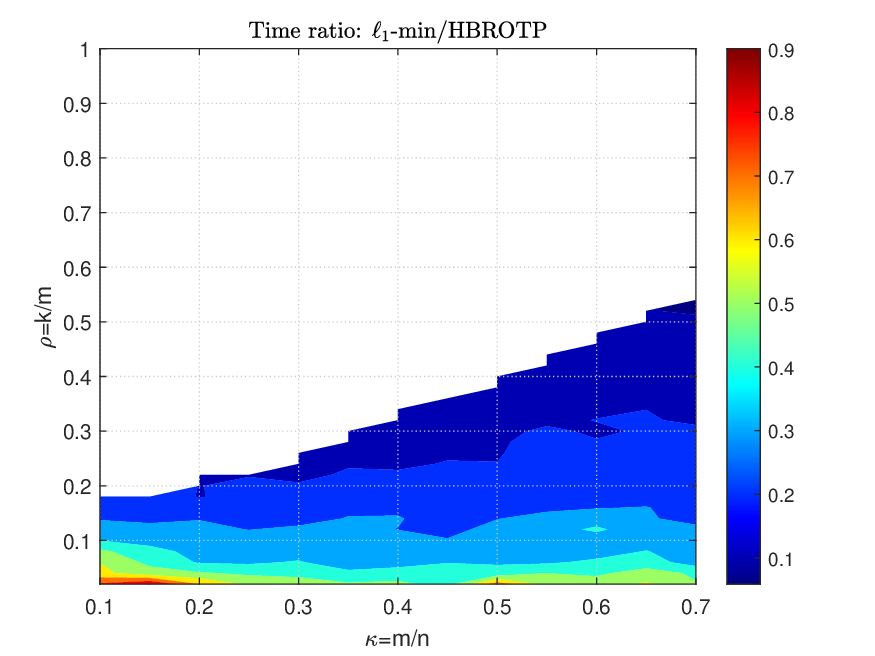}
  \end{minipage}
    }
   \subfigure[Time ratio of OMP and HBROTP]{
   \begin{minipage}[t]{0.45\linewidth}
  \centering
 \includegraphics[width=\textwidth,height=0.8\textwidth]{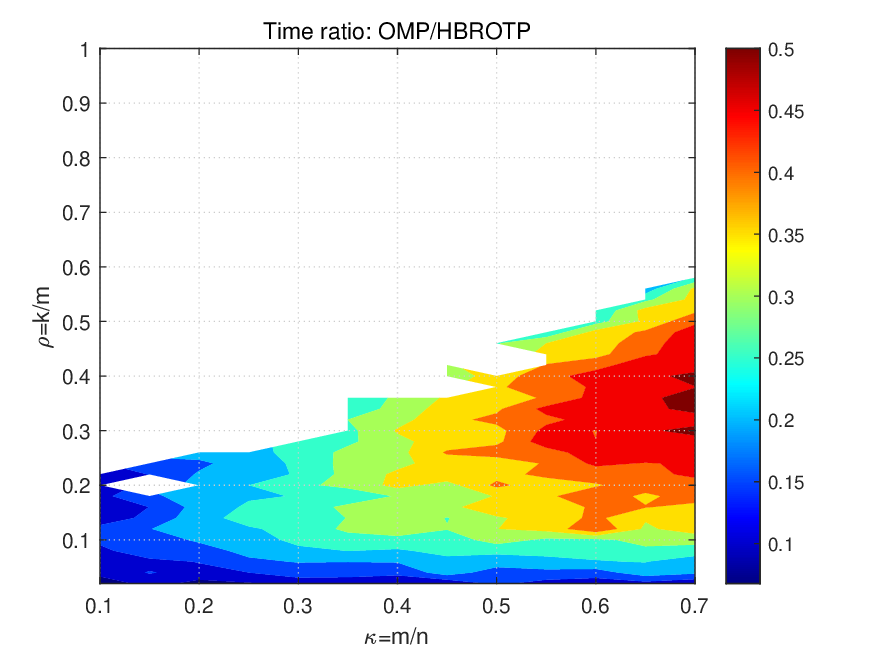}
    \end{minipage}
    }
   \caption{The ratios of average CPU time of  the algorithms.  }\label{fig-time-ratio}
   \end{figure}

 In the  intersection of the recovery regions of multiple algorithms, we compare the average CPU  time for signal recovery via these algorithms. Specifically, for each given $\kappa$, we test 10 problem instances for each algorithm with the mesh $(\kappa,\rho)$, wherein $\rho$  is ranged from 0.02 to 1 with stepsize 0.02 until the success rate of recovery is less than 90\%. The ratios of the average computational time of ROTP2, PGROTP, $\ell_1$-min and OMP against that of HBROTP are displayed  in Fig. \ref{fig-time-ratio} (a)-(d), respectively.  Fig. \ref{fig-time-ratio} (a) and (b)  show that HBROTP is at least 1.6 times faster than ROTP2  in most areas and slower than PGROTP except in the region  $[0.1,0.2]\times[0.02,0.1]$. On the other hand, from Fig. \ref{fig-time-ratio} (a)-(d), we observe that the ROT-type algorithms including HBROTP, ROTP2 and  PGROTP take relatively more time to solve the problems than $\ell_1$-min and OMP, due to solving quadratic convex optimization problems.

\subsection{Image reconstruction}\label{real_image}
 In this section, we compare the performances of several algorithms  on the  reconstruction of several images (\emph{Lena}, \emph{Peppers} and \emph{Baboon}) of size  $512 \times512$.  Only  accurate measurements are used in the experiment, and the measurement matrices are $m\times n$ normalized standard Gaussian matrices with $n=512$ and $m=\lceil\kappa\cdot n\rceil,$ where $\kappa$ is the sampling rate. The discrete wavelet transform with the \emph{`sym8'} wavelet is used to establish the sparse representation of the images. The input sparsity level is set as $k=\lceil n/10\rceil$ for  HBROTP, ROTP2  and PGROTP, and the   parameters of HBROTP are set as $\alpha=5$ and $ \beta = 0.2$. The peak signal-to-noise ratio (PSNR) is used to compare the reconstruction quality of images, which is  defined by
$$
PSNR:=10\cdot log_{10}(V^2/MSE),
$$
where $MSE$ denotes the mean-squared error  between the reconstructed  and  original image, and $V$ represents  the maximum fluctuation in the original image data type ($V=255$ is used in our experiments). Clearly, the larger the value of PSNR, the higher the reconstruction quality.
 \begin{table}[h]
 \centering
 \caption{Comparison of PSNR (dB) for  algorithms with different sampling rates.}\label{table-PSNR}
 \vspace{0.2cm}
 \begin{tabular}{|c|c|c|c|c|c|c|}
\hline
&$\kappa$& HBROTP & ROTP2 &  PGROTP &  $\ell_1$-min &   OMP\\
\hline
               &0.3 &32.60 	&32.34 	&33.12 	&33.63 	&31.37\\
Lena      &0.4 &34.37 	&32.49 	&31.75 	&35.10 	&32.95\\
              &0.5 &35.63 	&33.11 	&31.93 	&37.04 	&34.34\\
\hline
                 &0.3&31.31 	&32.33 	&33.27 	&33.03 	&30.17\\
Peppers  &0.4&33.10 	&31.78 	&31.60 	&34.08 	&31.66\\
                 &0.5&34.23 	&32.04 	&31.10 	&35.90 	&33.38\\
 \hline
                &0.3&28.70 	&31.35 	&32.33 	&29.90 	&28.35\\
 Baboon &0.4&29.12 	&30.06 	&30.00 	&30.05 	&28.53\\
                &0.5&29.37 	&30.06 	&30.07 	&30.20 	&28.78\\
\hline
 \end{tabular}
 \end{table}

The results in terms of PSNR with sampling rates $\kappa=0.3,0.4,0.5$ are summarized in Tab. \ref{table-PSNR}, from which we see that HBROTP is always superior to OMP and  inferior to $\ell_1$-min in reconstruction quality. For ROTP-type algorithms with $\kappa=0.4, 0.5$, the PSNR values of HBROTP exceed that of  ROTP2, PGROTP at least 1.88 dB for \emph{Lena} and 1.32 dB for \emph{Peppers}, respectively. In other cases, ROTP2 and PGROTP obtained better results than HBROTP in reconstruction quality. In particular, the performances of ROTP2 and PGROTP are always equivalent or superior to $\ell_1$-min for \emph{Baboon}.  In the meantime, the comparison of visual quality for the constructed images by HBROTP with $\kappa=0.3,0.4,0.5$ is displayed in Fig. \ref{Image}. It can be seen  that the reconstruction  quality has been significantly improved for three images as the sampling rate $\kappa$ is ranged from $0.3$ to $0.5$, and the best visual results have been achieved around $\kappa=0.5$.
\begin{figure}[h]
\subfigbottomskip=-0.5cm
\subfigcapskip=-0.5cm
   \centering
  \subfigure[$\begin{array}{c}\textrm{Lena}\\
  \textrm{(Original)}
  \end{array}$]{
   \begin{minipage}[t]{0.28\linewidth}
  \includegraphics[width=\textwidth,height=\textwidth]{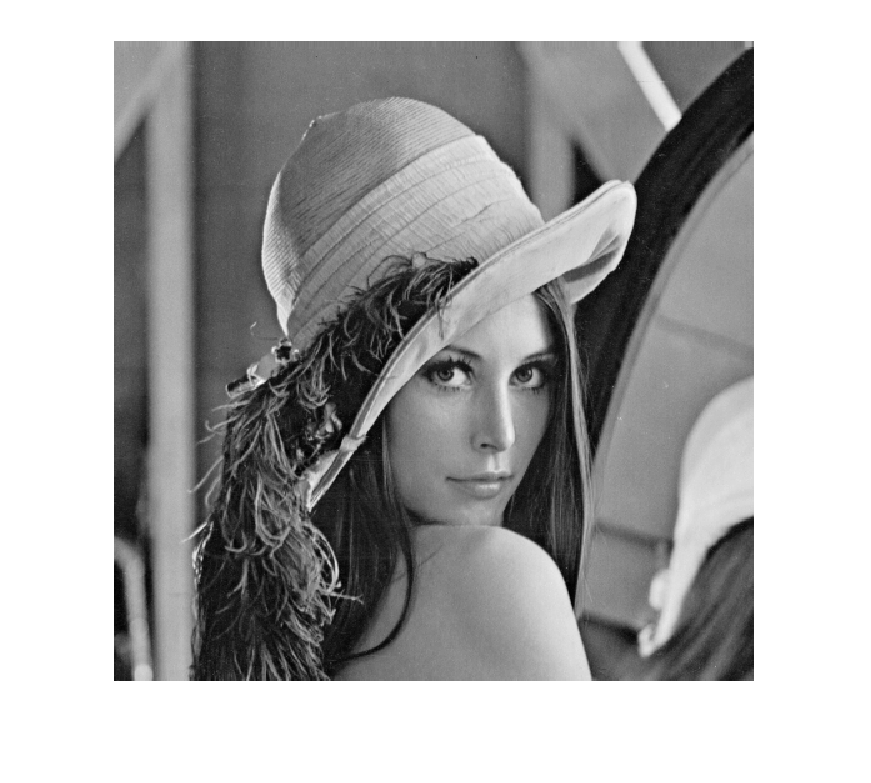}
  \end{minipage}
    } \hspace{-1cm}
     \subfigure[$\kappa=0.3$]{
   \begin{minipage}[t]{0.28\linewidth}
  \includegraphics[width=\textwidth,height=\textwidth]{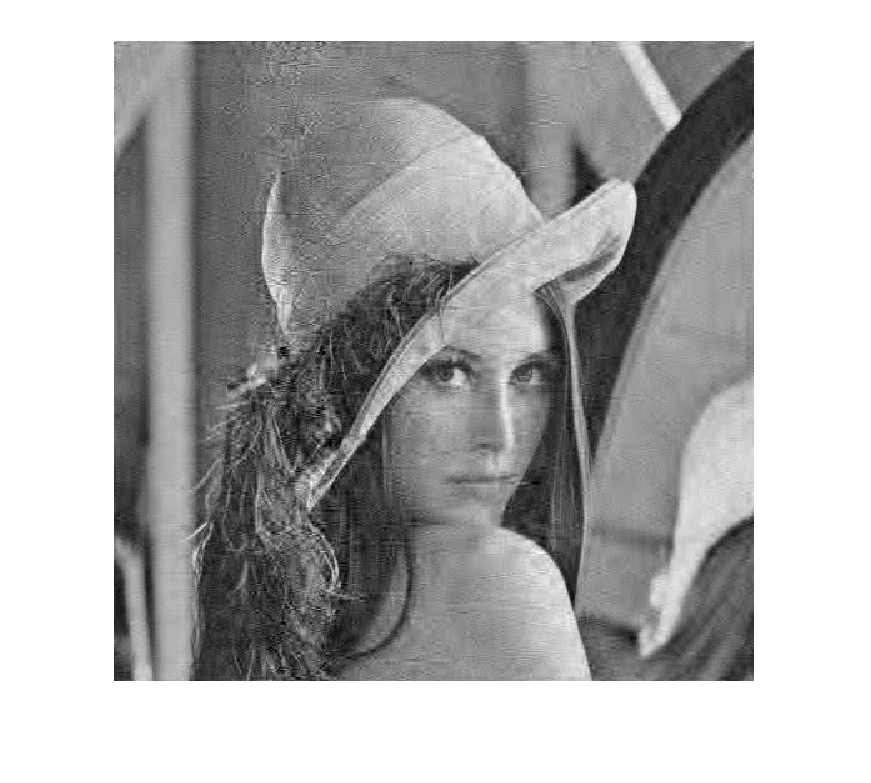}
  \end{minipage}
    }   \hspace{-1cm}
     \subfigure[$\kappa=0.4$]{
   \begin{minipage}[t]{0.28\linewidth}
  \includegraphics[width=\textwidth,height=\textwidth]{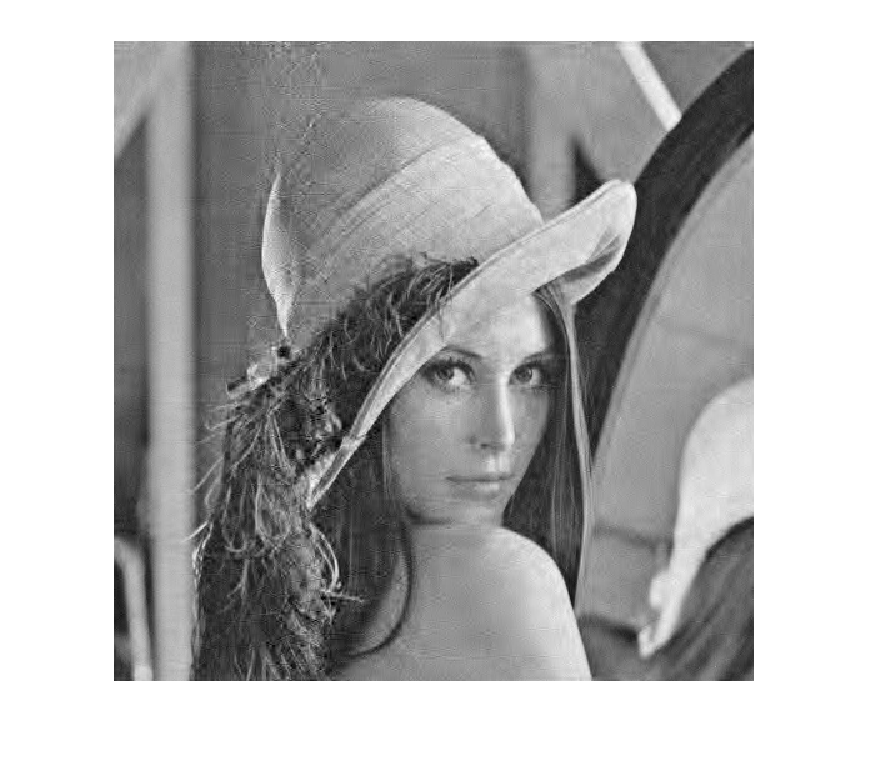}
  \end{minipage}
    }   \hspace{-1cm}
      \subfigure[$\kappa=0.5$]{
   \begin{minipage}[t]{0.28\linewidth}
  \includegraphics[width=\textwidth,height=\textwidth]{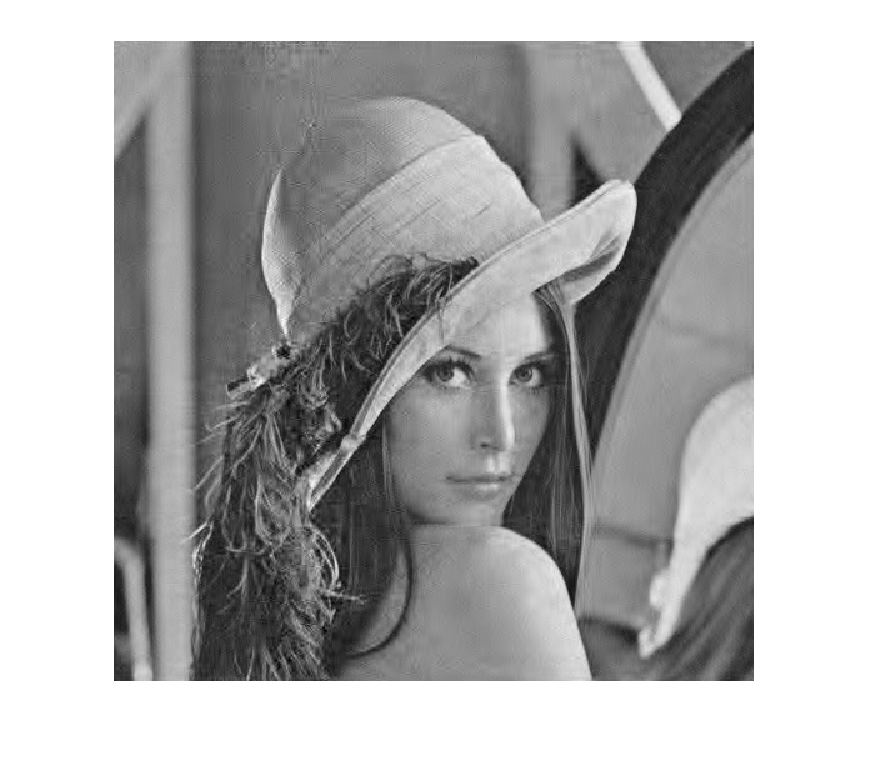}
  \end{minipage}
    }
 \vskip 0.05in
      \subfigure[$\begin{array}{c}\textrm{Peppers}\\
  \textrm{(Original)}
  \end{array}$]{
   \begin{minipage}[t]{0.28\linewidth}
  \includegraphics[width=\textwidth,height=\textwidth]{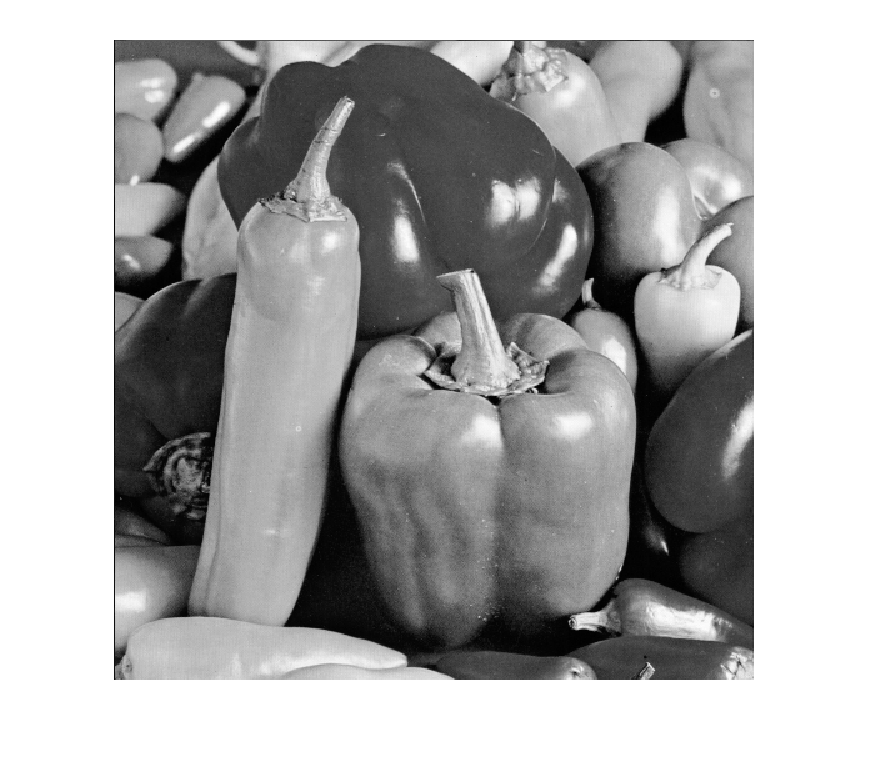}
  \end{minipage}
    } \hspace{-1cm}
     \subfigure[$\kappa=0.3$]{
   \begin{minipage}[t]{0.28\linewidth}
  \includegraphics[width=\textwidth,height=\textwidth]{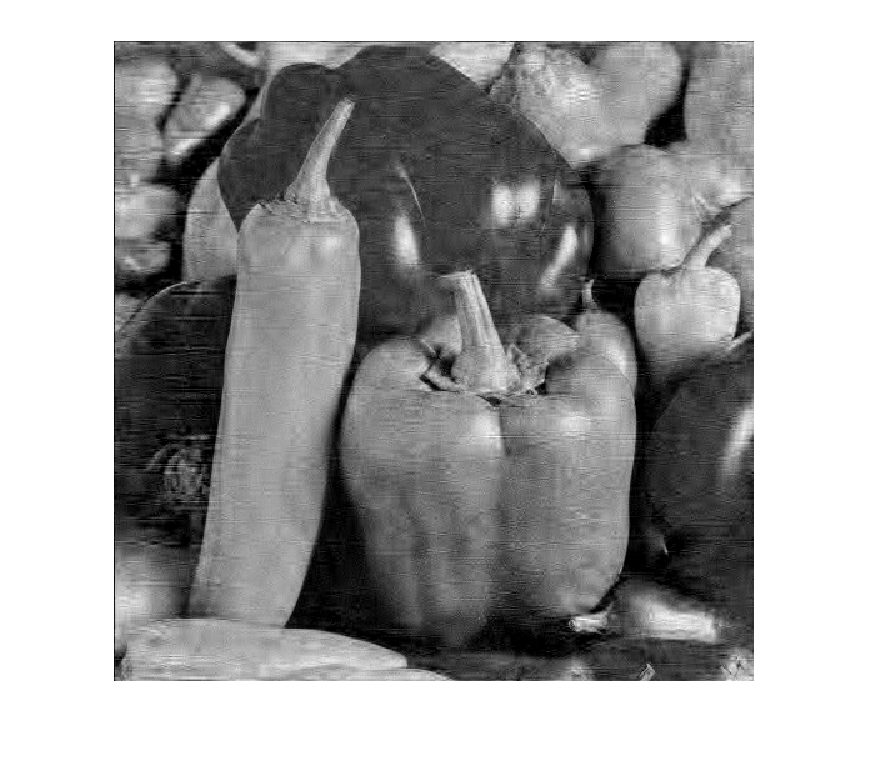}
  \end{minipage}
    }   \hspace{-1cm}
     \subfigure[$\kappa=0.4$]{
   \begin{minipage}[t]{0.28\linewidth}
  \includegraphics[width=\textwidth,height=\textwidth]{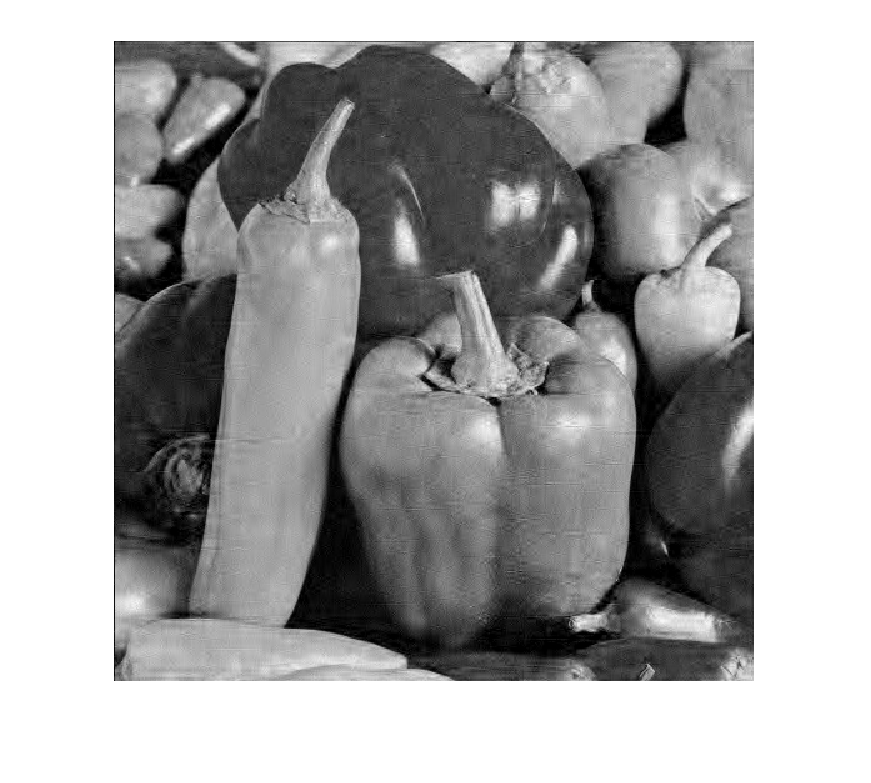}
  \end{minipage}
    }   \hspace{-1cm}
      \subfigure[$\kappa=0.5$]{
   \begin{minipage}[t]{0.28\linewidth}
  \includegraphics[width=\textwidth,height=\textwidth]{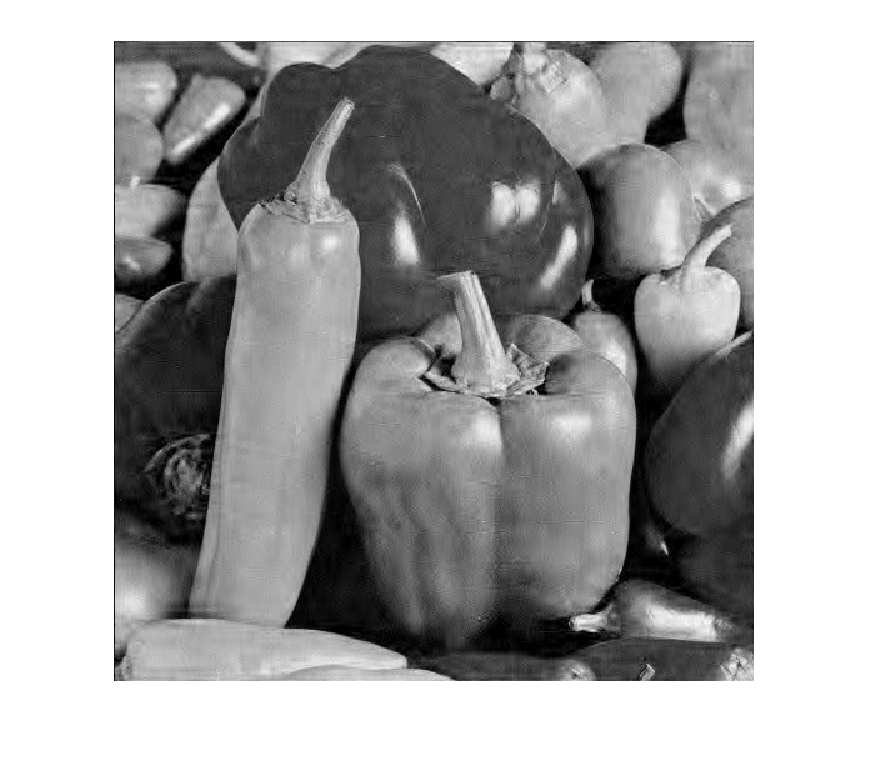}
  \end{minipage}
    }
 \vskip 0.05in
  \subfigure[$\begin{array}{c}\textrm{Baboon}\\
  \textrm{(Original)}
  \end{array}$]{
   \begin{minipage}[t]{0.28\linewidth}
  \includegraphics[width=\textwidth,height=\textwidth]{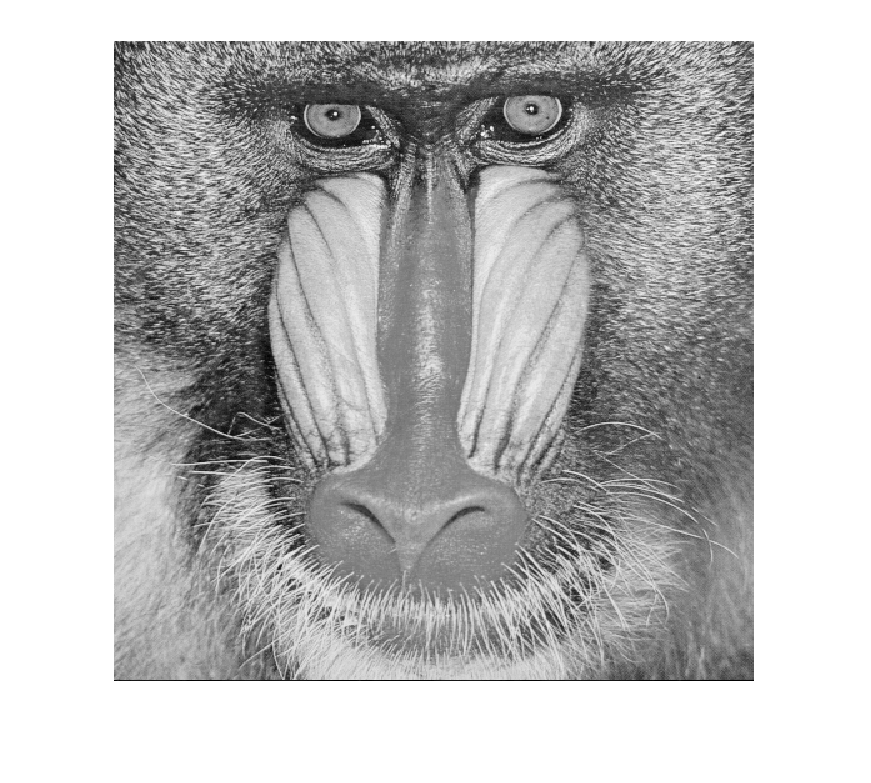}
  \end{minipage}
    } \hspace{-1cm}
     \subfigure[$\kappa=0.3$]{
   \begin{minipage}[t]{0.28\linewidth}
  \includegraphics[width=\textwidth,height=\textwidth]{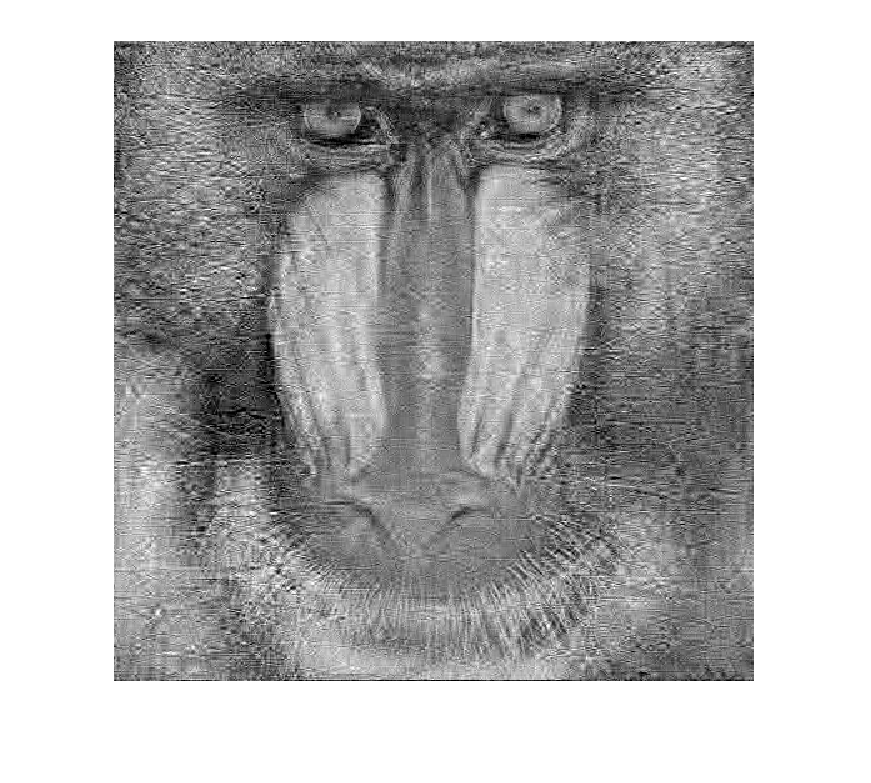}
  \end{minipage}
    }   \hspace{-1cm}
     \subfigure[$\kappa=0.4$]{
   \begin{minipage}[t]{0.28\linewidth}
  \includegraphics[width=\textwidth,height=\textwidth]{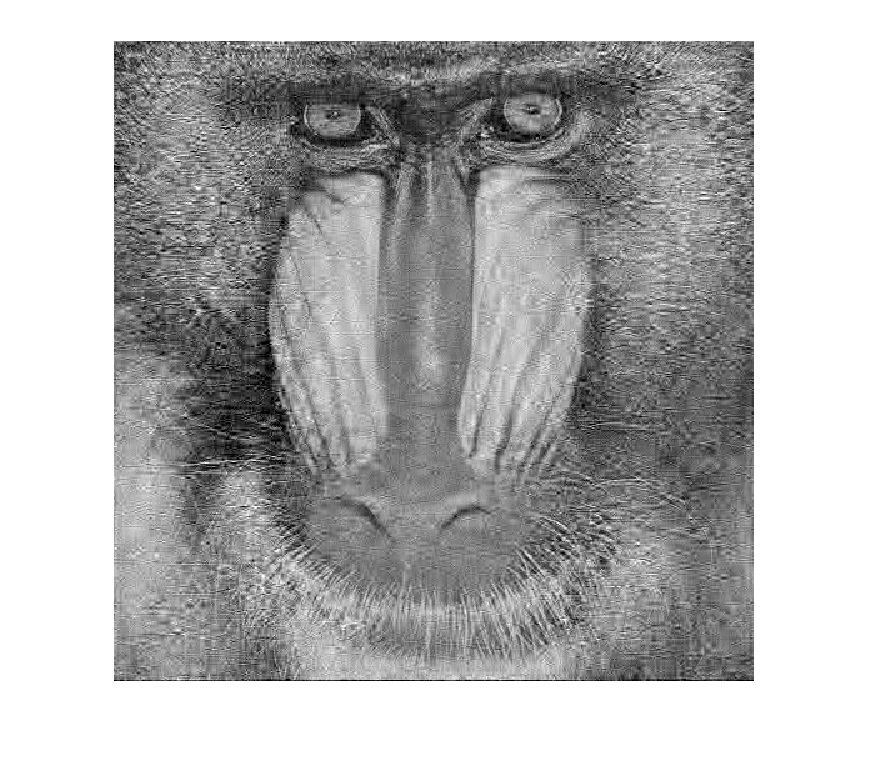}
  \end{minipage}
    }   \hspace{-1cm}
      \subfigure[$\kappa=0.5$]{
   \begin{minipage}[t]{0.28\linewidth}
  \includegraphics[width=\textwidth,height=\textwidth]{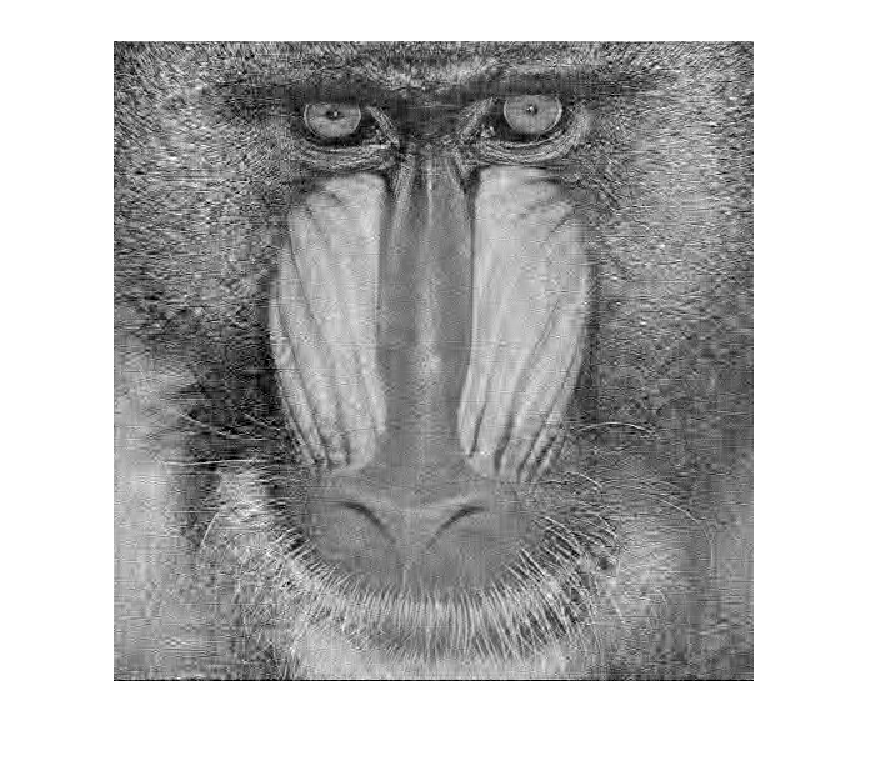}
  \end{minipage}
    }
   \caption{Performance of HBROTP for  three images with  different sampling rates. }\label{Image}
   \end{figure}

\subsection{Image deblurring and denoising}\label{deblur_image}
 In this section, we compare the performances of HBROTP and  PLB on  image deblurring and denoising.  In our experiments, several images including \emph{Boats}, \emph{Cameraman}, \emph{Clock}, \emph{Goldhill} and \emph{Shepp-Logan}  of size  $128 \times128$ are expressed as vectors in $\mathbb{R}^{n}$ with $n=16384$ through concatenating their columns.  For a given image $z$,  the corresponding blurred noisy image $y\in\mathbb{R}^{n}$ is obtained by  \eqref{model-lip}, in which  $\Phi\in\mathbb{R}^{n\times n}$  is the blurring matrix generated by a Gaussian kernel \emph{fspecial(`Gaussian',11,0.6)} in Matlab  with periodic boundary condition (see Chapter 4 in \cite{GL13}), and $\nu$  is a  Gaussian white noise vector  with mean 0 and standard deviation $\hat{\sigma}$. The sparse representation of $z$  is expressed as  $z=\Psi x$, where $\Psi$ is taken as  the synthesis operator generated by the linear B-splines \cite{ABR23,BPR21}, denoted $\Psi_1$, or the discrete wavelet matrix generated by the \emph{`sym8'} wavelet, denoted $\Psi_2$. Thus the image deblurring and denoising can be achieved by solving the corresponding SLI problem \eqref{model-slip}.

For  HBROTP, the  discrete wavelet transform, i.e., $\Psi=\Psi_2$ is used to achieve  the sparse
representation of the image,  and the   parameters in this algorithm are set as $k=\lceil 0.4n\rceil$, $\alpha=1$ and $\beta=0.8$. For PLB,  we use PLB$_i$  to represent PLB with $\Psi=\Psi_i$ for $i=1,2$, and  the  parameters $(\mu,d,\delta)$ are given as follows:   $\mu=0.05$ is determined experimentally in terms of PSNR;  the dimension of Krylov subspace is set as $d=11$ according to the suggestion in \cite{ABR23};   $\delta$ is the same as that of \cite{BPR21}.  The stopping criterion of the algorithm is given by
$$
\|x^{p+1}-x^p\|_2/\|x^{p+1}\|_2\leq 10^{-4}.
$$

 \begin{table}[h]
 \centering
 \caption{Comparison of PSNR (dB) and CPU time (in seconds) of HBROTP and PLB  on image deblurring and denoising with different standard deviation $\hat{\sigma}$.}\label{table-deblur-PSNR2}
 \vspace{0.2cm}
 \begin{tabular}{|c|c|c|c|c|c|c|c|}
 \hline
Standard&\multirow{2}*{Images}&\multicolumn{3}{|c|}{PSNR(dB)}&\multicolumn{3}{|c|}{CPU time(seconds)}\\
\cline{3-8} deviation& &PLB$_1$&PLB$_2$& HBROTP &PLB$_1$&PLB$_2$& HBROTP\\
 \hline
\multirow{6}*{$\hat{\sigma}=2$}&Barbara&35.34 	&35.34 	&37.75 		       &0.80	&4.31	&2291\\
&Boats&35.53 	&35.52 	&35.34 		           &0.83	&3.30	&2049\\
&Cameraman&35.58 	&35.57 	&37.26 		&0.83	&2.80	&1383\\
&Clock&35.66 	&35.65 	&38.12 		          &0.59	&2.13	&1305\\
&Goldhill&35.34 	&35.33 	&35.05 	        	&0.70	&2.81	&2319\\
&Shepp-Logan&35.54 	&35.51 	&38.72 		&0.86	&3.28	&1764\\
\hline
\multirow{6}*{$\hat{\sigma}=4$}&Barbara&30.74 	&30.74 	&33.86 		&0.94	&4.03	&2071\\
&Boats&30.80 	&30.80 	&33.06 		&0.38	&3.11	&2411\\
&Cameraman&30.79 	&30.79 	&33.53 		&0.78	&4.14	&1370\\
&Clock&30.90 	&30.90 	&33.44 		&0.53	&3.28&	1737\\
&Goldhill&30.76 	&30.75 	&33.02 		&0.80	&3.95	&2444\\
&Shepp-Logan&30.93 	&30.92 	&33.70 		&0.61	&4.66	&1381\\
\hline
 \end{tabular}
 \end{table}

The results in terms of CPU time and PSNR  for HBROTP and PLB on   image deblurring and denoising with two different standard deviations $\hat{\sigma}=2, 4$ are given in Tab. \ref{table-deblur-PSNR2}. In the case   $\hat{\sigma}=2$,  the PSNR values of HBROTP exceed that of PLB$_1$ and PLB$_2$ at least 1.6 dB for all images except \emph{Boats} and \emph{Goldhill}. As the  noise intensity increases,  the differences of   PSNR values between HBROTP and  PLB$_i(i=1,2)$ are enlarged to 2.2 dB for all images as $\hat{\sigma}=4$. This experiment shows that  HBROTP can be stronger than  PLB  on image deblurring and denoising, and  HBROTP is more stable than PLB in noisy situations. However, solving quadratic subproblem \eqref{algorithm-ROTP-2} causes the HBROTP method to consume  more time than PLB$_1$ and PLB$_2. $  Moreover, PLB$_1$ is  faster than PLB$_2$ since the synthesis operator $\Psi_1$ is more sparser than the discrete wavelet matrix $\Psi_2$. Finally, the deblurring/denoising effects of HBROTP and  PLB$_1$ on \emph{Cameraman} and \emph{Shepp-Logan} with $\hat{\sigma}=2$ are shown in Fig. \ref{deblur_Image}, from which  it can be observed that both HBROTP and PLB$_1$ can successfully recover the two images in high quality.

\begin{figure}[h]
   \centering
  \subfigure[$\begin{array}{c}\textrm{Cameraman}\\
  \textrm{(Original)}
  \end{array}$]{
   \begin{minipage}[t]{0.22\linewidth}
  \includegraphics[width=\textwidth,height=\textwidth]{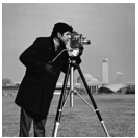}
  \end{minipage}
    } 
     \subfigure[$\begin{array}{c}\textrm{Noisy Blurred}\\
  \textrm{ Image}
  \end{array}$]{
   \begin{minipage}[t]{0.22\linewidth}
  \includegraphics[width=\textwidth,height=\textwidth]{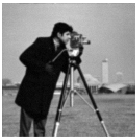}
  \end{minipage}
    }  
     \subfigure[PLB$_1$]{
   \begin{minipage}[t]{0.22\linewidth}
  \includegraphics[width=\textwidth,height=\textwidth]{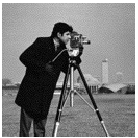}
  \end{minipage}
    }  
      \subfigure[HBROTP]{
   \begin{minipage}[t]{0.22\linewidth}
  \includegraphics[width=\textwidth,height=\textwidth]{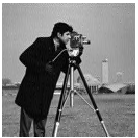}
  \end{minipage}
    }
 \vskip 0.05in
   \subfigure[$\begin{array}{c}\textrm{Shepp-Logan}\\
  \textrm{(Original)}
  \end{array}$]{
   \begin{minipage}[t]{0.22\linewidth}
  \includegraphics[width=\textwidth,height=\textwidth]{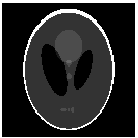}
  \end{minipage}
    } 
     \subfigure[$\begin{array}{c}\textrm{Noisy Blurred}\\
  \textrm{ Image}
  \end{array}$]{
   \begin{minipage}[t]{0.22\linewidth}
  \includegraphics[width=\textwidth,height=\textwidth]{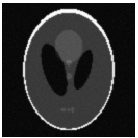}
  \end{minipage}
    }  
     \subfigure[PLB$_1$]{
   \begin{minipage}[t]{0.22\linewidth}
  \includegraphics[width=\textwidth,height=\textwidth]{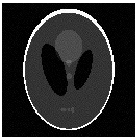}
  \end{minipage}
    }  
      \subfigure[HBROTP]{
   \begin{minipage}[t]{0.22\linewidth}
  \includegraphics[width=\textwidth,height=\textwidth]{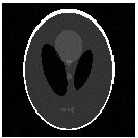}
  \end{minipage}
    }
   \caption{Performance of PLB$_1$ and HBROTP on image deblurring and denoising with $\hat{\sigma}=2$. }\label{deblur_Image}
   \end{figure}

\section{Conclusions}\label{conclusion}
The new algorithms that combine the optimal $k$-thresholding and heavy-ball technique are proposed in this paper. Such algorithms can be seen as the acceleration versions of the optimal $k$-thresholding methods. The solution error bounds and convergence of the proposed algorithms have been shown mainly under the RIP of the matrices. The numerical performance of the proposed HBROTP algorithm has been evaluated through phase transition,  average runtime and image processing. The experiment results indicate that HBROTP  is a robust signal recovery method, especially when the sampling rate is relatively low (e.g., $\kappa \leq 0.5$), and it is generally faster than the standard ROTP method thank to the heavy-ball acceleration technique.\\




\end{document}